\documentclass[11pt,english]{article}
\usepackage{lmodern}
\usepackage[T1]{fontenc}
\usepackage[latin9]{inputenc}

\usepackage{geometry}
\geometry{verbose,tmargin=1in,bmargin=1in,lmargin=1in,rmargin=1in,footnotesep=0.3in}

\usepackage{amsmath,amssymb,amsthm}
\usepackage{mathrsfs}
\usepackage{mathtools}
\DeclarePairedDelimiter{\ceil}{\lceil}{\rceil}
\DeclarePairedDelimiter{\floor}{\lfloor}{\rfloor}
\usepackage{physics}
\usepackage{bm}
\usepackage{latexsym}
\usepackage{esint}
\usepackage{authblk}
\usepackage{graphicx}
\usepackage{subfig}
\usepackage{float}
\usepackage{wrapfig}
\usepackage{graphics}
\graphicspath{{./figures/}}

\usepackage[lined,boxed,linesnumbered,commentsnumbered,ruled,vlined]{algorithm2e}
\SetAlFnt{\smaller}
\SetAlCapNameFnt{\footnotesize}
\SetAlCapFnt{\footnotesize}
\SetArgSty{textnormal}
\usepackage{floatpag}

\usepackage{comment}
\usepackage{enumitem}
\usepackage{pdfpages}
\usepackage{eso-pic}
\usepackage{everyshi}
\usepackage{hyperref}
\usepackage{url}
\usepackage{color}
\usepackage{placeins}
\usepackage{lscape}
\usepackage[english]{babel}
\usepackage{lineno}
\usepackage[toc,page]{appendix}
\usepackage{titlesec}
\usepackage[noabbrev,capitalise]{cleveref}
\usepackage{xfrac}
\usepackage{afterpage} 

\makeatletter
\@ifundefined{showcaptionsetup}{}{
 \PassOptionsToPackage{caption=false}{subfig}}
\makeatother

\usepackage{algpseudocode}

\algnewcommand{\algorithmicand}{\textbf{ and }}
\algnewcommand{\algorithmicor}{\textbf{ or }}
\algnewcommand{\OR}{\algorithmicor}
\algnewcommand{\AND}{\algorithmicand}
\algnewcommand{\var}{\texttt}

\theoremstyle{plain}
\newtheorem{thm}{Theorem}
\theoremstyle{plain}
  \newtheorem{lem}[thm]{Lemma}  

\theoremstyle{plain}
        
\newtheorem{cor}[thm]{Corollary}      

\theoremstyle{definition}

\theoremstyle{remark}
\newtheorem{rem}{Remark}          

\theoremstyle{observ}

\theoremstyle{claim}
\newtheorem{claim}[thm]{Claim}

\def\Area{\mathop{\rm Area}}
\def\AR{\mathop{\rm AR}}

\def\diam{\mathop{\rm diam}}
\def\width{\mathop{\rm width}}
\def\height{\mathop{\rm height}}

\def\FW{\mathop{\rm FW}}

\usepackage{pifont}
\usepackage{tikz}

\newcommand{\rectang}{\mathbin{\text{\tikz [x=1ex,y=1ex,line width=0.1ex,line join=round] \draw (0,0) rectangle (1.5,1) ;}}}
\newcommand{\strip}{\mathbin{\text{\tikz [x=1ex,y=1ex,line width=0.25ex,line join=round] \draw (0,0) rectangle (1,1.5) ;}}}

\providecommand{\keywords}[1]
{
  \small	
  \textbf{\textit{Keywords:}} #1
}

\usepackage{titlesec}
\titlespacing*{\section}{0pt}{3ex plus 1ex minus .2ex}{1.5ex plus .2ex}
\titlespacing*{\subsection}{0pt}{2.5ex plus 1ex minus .2ex}{1.25ex plus .2ex}
\titlespacing*{\subsubsection}{0pt}{2.25ex plus 1ex minus .2ex}{1ex plus .2ex}
\titlespacing*{\paragraph}{0pt}{2.5ex plus 1ex minus .2ex}{1em}

\usepackage[labelfont=bf,tableposition=top,figureposition=bottom]{caption}

\DeclareCaptionLabelSeparator*{spaced}{\\[2ex]}
\captionsetup[table]{singlelinecheck=false,justification=justified,font=footnotesize,skip=0pt} 
\captionsetup[figure]{singlelinecheck=true,justification=justified,font=footnotesize,skip=5pt}
\captionsetup[algorithm]{singlelinecheck=false,justification=justified,font=footnotesize,skip=0pt} 

\captionsetup{belowskip=-8pt}

\usepackage{etoolbox}
 
\makeatletter
\patchcmd{\maketitle}{\@fnsymbol}{\@alph}{}{}  
\makeatother

\usepackage{booktabs}

\begin{document}
\title{Approximating Median Points in a Convex Polygon}

\author[1]{Reyhaneh Mohammadi}
\author[2]{Raghuveer Devulapalli}
\author[3]{Mehdi Behroozi \footnote{Corresponding author: m.behroozi@neu.edu; Phone (617) 373-2032; Address: 449 Snell Engineering Center, 360 Huntington Ave, Boston, MA 02115. M.B. gratefully acknowledges the support of Northeastern University for this research.}}
\affil[1,3]{Department of Mechanical and Industrial Engineering, Northeastern University, Boston, MA, USA}
\affil[1,3]{E-mail addresses: mohammadi.re@northeastern.edu; m.behroozi@neu.edu}
\affil[2]{Intel Corporation, Hillsboro, OR, USA}
\affil[2]{E-mail address: raghuveer.devulapalli@intel.com}

\date{}

\maketitle

\begin{abstract}
We develop two simple and efficient approximation algorithms for the continuous $k$-medians problems, where we seek to find the optimal location of $k$ facilities among a continuum of client points in a convex polygon $C$ with $n$ vertices in a way that the total (average) Euclidean distance between clients and their nearest facility is minimized. Both algorithms run in $\mathcal{O}(n + k + k \log n)$ time. Our algorithms produce solutions within a factor of 2.002 of optimality. In addition, our simulation results applied to the convex hulls of the State of Massachusetts and the Town of Brookline, MA show that our algorithms generally perform within a range of 5\% to 22\% of optimality in practice.
\end{abstract}

\keywords{Continuous Location Optimization; $k$-Medians Problem; Approximation Algorithms; Geometric Optimization; Computational Geometry}

\section{Introduction}
\label{sec:Introduction}
The $k$-medians problem is a well-known geometric optimization problem in which the aim is to select $k$ median points in a way to minimize the total distance of clients to their nearest median points with respect to some measure. The most studied setting for this problem is considering the client set as a discrete set. Papadimitriou \cite{1} showed that the $k$-medians problem in the Euclidean plane is NP-Complete. The first constant-factor approximation algorithm for the $k$-medians problem in the plane was proposed by Arora \cite{Arora}. Bartal gave a randomized approximation algorithm in \cite{Bartal1996randomized} to approximate distances on a graph with $n$ vertices with distances on a tree with approximation ratio of $\mathcal{O}(\log^2 n)$. This made it possible for many optimization problems, including $k$-medians problem, where the objective function is some function of edge length to be solved on a tree, which is much easier to solve exactly or approximately due to the structure of a tree, while losing only a factor of $\mathcal{O}(\log^2 n)$. The $k$-medians problem on a general graph is NP-hard, but there are optimal algorithms for solving it on a tree with $n$ vertices  in time $\mathcal{O}(k^2 n^2)$ \cite{kariv1979algorithmic} and time $\mathcal{O}(kn^2)$ \cite{tamir1996pn2}. Later, Bartal improved the ratio to $\mathcal{O}(\log n\:\log\log n)$, therefore providing a randomized approximation algorithm for the $k$-medians problem with the same approximation ratio \cite{bartal1998approximating}. Then Charikar et al. \cite{Charikar1} by subsequently derandomizing and refining this algorithm obtained the first deterministic approximation algorithm with ratio $\mathcal{O}(\log k\:\log\log k)$ of optimality. Charikar and Guha \cite{Charikar} proposed a factor $6\frac{2}{3}$ approximation algorithm for this problem and improved the bound. Then, Jain and Vazirani \cite{Jane1} proposed a 3-approximation algorithm for the uncapacitated facility location problem based on the primal-dual schema and used that as a sub-routine for the $k$-medians problem and proposed a 6-approximation algorithm. Then Jain et al. \cite{Jane2} improved the bound for the uncapacitated facility location problem to 1.61 leading to a 4-approximation algorithm for the $k$-medians problem. Li and Svensson \cite{Li} presented an approximation algorithm for the $k$-medians problem and via a 2-step proof showed that the approximation guarantee is $1+\sqrt{3}+\epsilon$. In the first step, they showed that to give an $\alpha$-approximation
algorithm for the $k$-medians problem, it suffices to have a pseudo-approximation algorithm that generates an $\alpha$-approximate solution by opening $k+O(1)$ facilities and in the second step, they designed a pseudo-approximation algorithm
with $\alpha = 1+\sqrt{3}+\epsilon$. This bound was slightly improved to $2.675 + \epsilon$ by Byrka et al. \cite{byrka2017improved}.
\\
In the other setting of this problem, the number of clients are so large that we could consider the client set as a continuous region. This setting makes the corresponding discrete $k$-medians problem intractable. However, assuming a continuous distribution of clients could help us in modeling and solving it in an easier way.
The first attempt for solving this problem was made by Fekete et al. in \cite{Fekete}, which provides polynomial-time algorithms for different versions of the 1-median (Fermat-Weber) problem using $L_1$ norm. They also showed that the multiple-center version of this problem which is the $k$-median problem with $L_1$ distance, is NP-hard for large $k$. Later, Carlsson et al. \cite{Carlsson2014kmedian} developed a simple factor 2.74 approximation algorithm for the continuous $k$-medians problem in a convex polygon using $L_2$ norm. 

In this work, we propose two simple constant-factor approximation algorithms for the continuous $k$-medians problem in a convex polygon $C$ with $n$ vertices under the $L_2$ norm. Our theoretical analysis shows that our algorithms always produces solutions within a factor of 2.002 of optimality in $\mathcal{O}(n + k + k \log n)$ time. To the best of our knowledge, this provides a significant improvement to the previous best results for this problem. We have also proposed a modification to our algorithms in order to improve the average practical performance of the algorithms. Our simulation results applied to the convex hulls of the State of Massachusetts and the Town of Brookline, MA show that in practice, our algorithms generally perform much better than the worst case analysis and their solutions are  within a range of 5\% to 22\% of optimality.

\subsection{Notational Conventions}
\label{subsec:Notations}
We define the Fermat-Weber value of a region $C$ relative to the point $p \in C$ as
\[
\FW(C,p)= \iint_C{\|x-p\|\,dA},
\]
and Fermat-Weber value of region $C$ as
\[
\FW(C)= \min_{p}\iint_C{\|x-p\|\,dA}\,.
\]
We also define
\[
\FW(C,k)= \min_{P:|P|=k}\iint_C{\;\min_{p_i\in P}\|x-p_i\|\,dA}\, , 
\]
to be the Fermat-Weber objective function that we try to minimize, where $\|\cdot\|$ denotes the Euclidean norm and we are looking for a set $P$ of $k$ points. Note that $\FW(C)=\FW(C,1)$. When the region is a rectangle $R$ with dimensions $w$ and $h$, for simplicity of analysis, we show $\FW(R,p),\;\FW(R)$, and $\FW(R,k)$ with $\FW_{\rectang}(w,h,p),\; \FW_{\rectang}(w,h)$, and $\FW_{\rectang}(w,h,k)$, respectively. We also show half of $\FW_{\rectang}(w,h)$ with $\FW_{1/2}(w,h)$.
Let $\square C$ denote the minimum-area bounding box of $C$ and let $\width(C)$ and $\height(C)$ denote the horizontal and vertical dimensions of $\square C$. Let $\Area(C)$ and $\diam(C)$ denote the area and diameter of $C$, respectively. For a rectangle $R$, let $\AR(R)$ be the aspect ratio of $R$ which is defined as  $\AR(R)=\max\left\{\frac{\height(R)}{\width(R)}, \: \frac{\width(R)}{\height(R)}\right\}$. Finally, we use $\rho$ to denote the approximation factor of an algorithm.

\section{General Framework of our Approach}
\label{sec:GeneralFramework}
We first present a general framework for our algorithms for solving the continuous $k$-medians problem in a convex polygon. Our approach is aligned with that of \cite{Carlsson2014kmedian} and can be summarized in as follows: the input to our algorithms is a convex polygon $C$ with $n$ vertices and an integer $k$. We assume without loss of generality that $C$ is aligned so that its diameter coincides with the coordinate $x$-axis. We then enclose $C$ in an axis-aligned minimum bounding box $R=\square C$ of dimensions $w$ and $h$, where $w=\diam\left(C\right)$. By aligning the diameter with the $x$-axis, we must have:
\begin{equation}
\label{eq:BoxAreaRelation} 
\frac{wh}{2}\leq\Area\left(C\right)\leq wh
\end{equation} 
Note that \eqref{eq:BoxAreaRelation} holds for a general minimum bounding box of a convex region too (the lower and upper bounds are tight when $C$ is a triangle and a rectangle, respectively). 

We then somehow divide $\square C$ into rectangular pieces of areas $\Area\left(\square C\right)/k$. The way this partitioning is done will have a significant impact on the performance of the algorithm and will be discussed in Sections \ref{sec:ConstructiveAlg} and \ref{sec:SubdivisionAlg}. Then, the center of each sub-rectangle is chosen as our landmark points. It may happen that some of these points may lie outside the convex polygon  (Figure \ref{fig:alg-c}). Such points are simply relocated back into the polygon (Figure \ref{fig:alg-d}). The way we to do this operation turns out to have a minimal impact on the performance of the algorithm.  This approach is illustrated in Figure \ref{fig:An-equal-area-partition}. Finally, without loss of generality (w.l.o.g.) and for simplicity of our abalysis, we can consider the bounding box $R$ to be of dimensions $w$ and $1$, where $w=\diam(C)\geq1$. It follows that $\frac{w}{2}\leq A\leq w$. The algorithm breaks $R$ into $k$ sub-rectangles of equal area $\frac{w}{k}$.

\begin{figure}[t]
\begin{centering}
\subfloat[\label{fig:alg-a}]{\begin{centering}
\includegraphics[width=0.25\columnwidth]{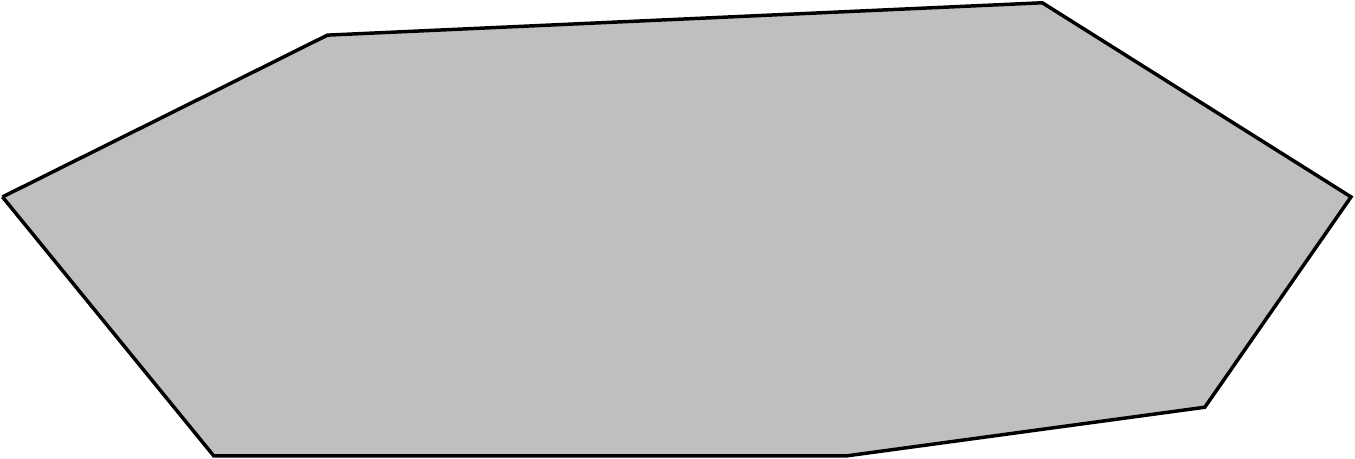}
\par\end{centering}

}\quad{}\subfloat[\label{fig:alg-b}]{\begin{centering}
\includegraphics[width=0.25\columnwidth]{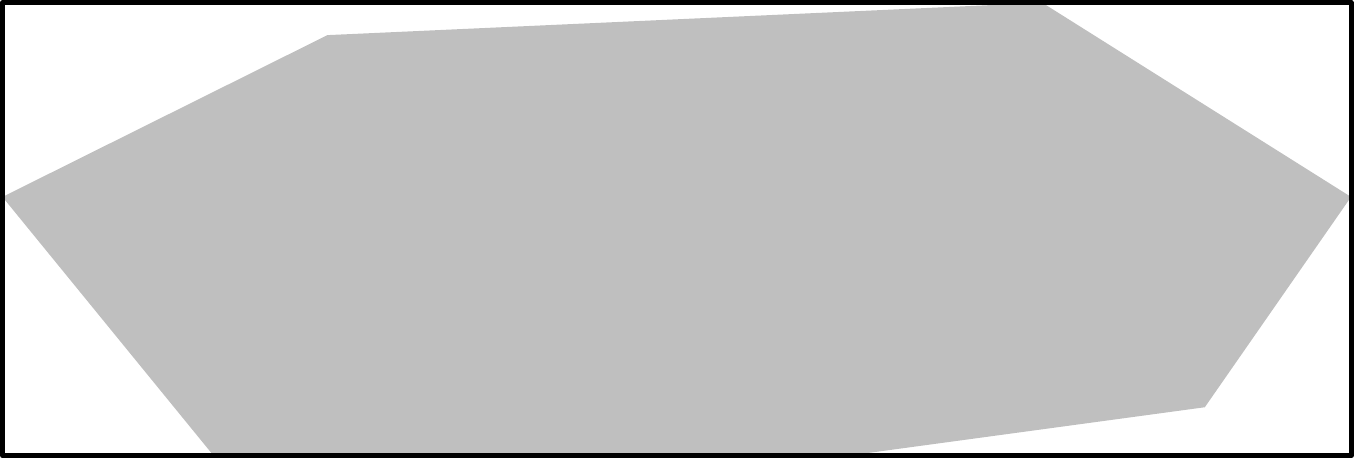}
\par\end{centering}

}\quad{}\subfloat[\label{fig:alg-c}]{\begin{centering}
\includegraphics[width=0.25\columnwidth]{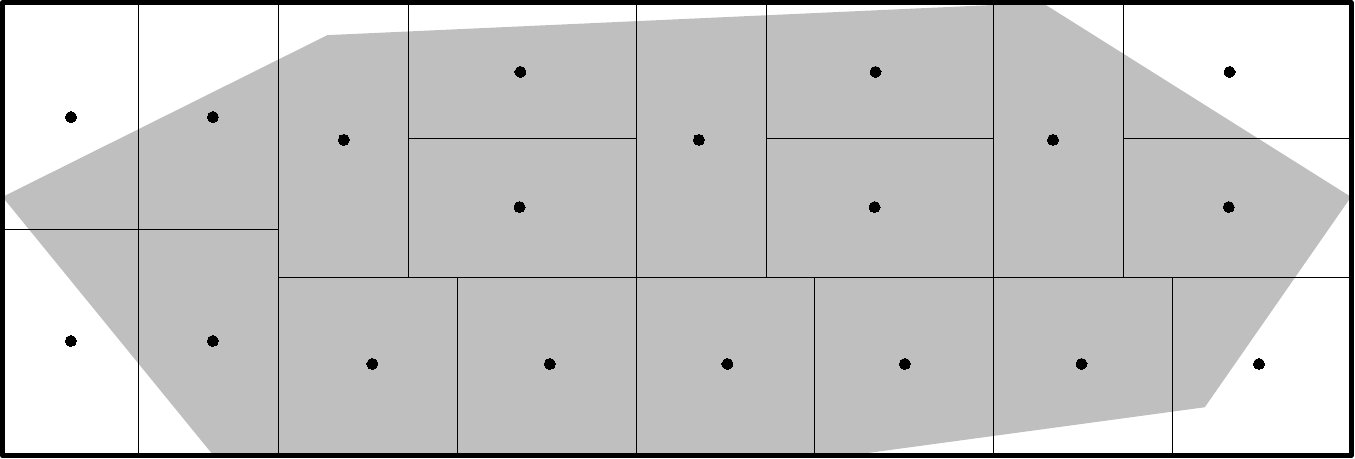}
\par\end{centering}

}
\par\end{centering}

\begin{centering}
\subfloat[\label{fig:alg-d}]{\begin{centering}
\includegraphics[width=0.25\columnwidth]{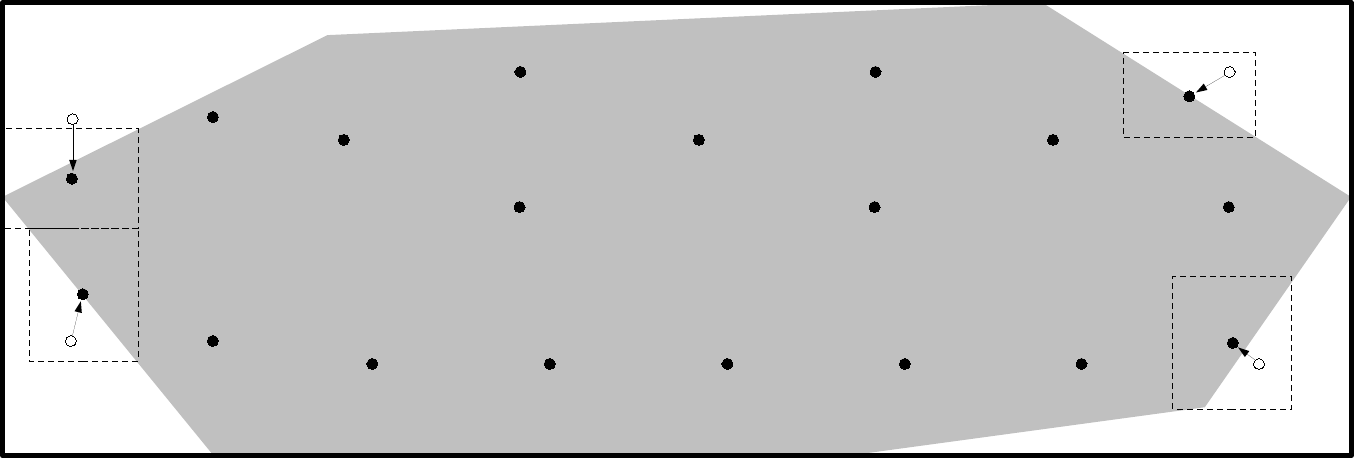}
\par\end{centering}

}\quad{}\subfloat[\label{fig:alg-e}]{\begin{centering}
\includegraphics[width=0.25\columnwidth]{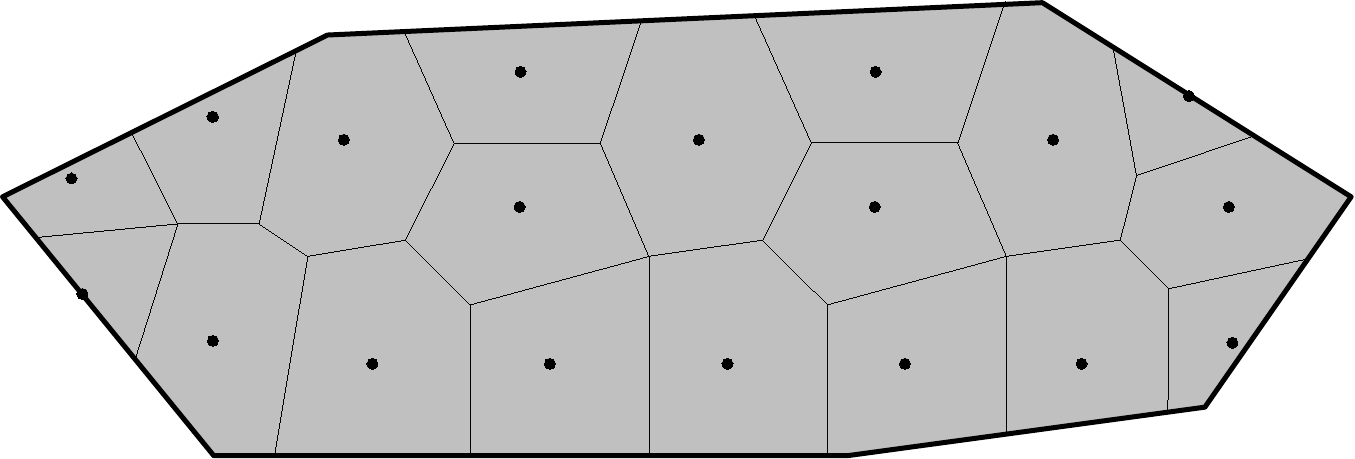}
\par\end{centering}

}
\par\end{centering}

\caption{\label{fig:An-equal-area-partition}We begin in (\ref{fig:alg-a}) with a convex polygon $C$, whose axis-aligned bounding box $\square C$ is computed in (\ref{fig:alg-b}). The bounding box is then partitioned into equal-area pieces in (\ref{fig:alg-c}). Some of the centers of these pieces are then relocated in (\ref{fig:alg-d}), and (\ref{fig:alg-e}) shows the output and Voronoi partition.  The above images are reproduced from Figure 1 of \cite{Carlsson2014kmedian}.}
\end{figure}

\section{Upper and Lower Bounds}
Given a convex object $C$ of area $A$, we want to find upper and lower bounds on $\FW(C)$. The main motivation to find these bounds is that, we want to use them in finding upper and lower bounds for the $k$-medians problem ($\FW(C,k)$). Note that the papers \cite{carmi2005fermat}, \cite{AbuAffash} and \cite{DumitrescuFWBound} almost entirely deal with finding upper and lower bounds on $\FW(C)$ in terms of $A$ and $d=\diam(C)$. Summarized in one line, they prove the following bounds: $0.1666dA \leq \FW(C) \leq 0.3490 d A$. 

Unfortunately, these bounds are not useful in our work for two reasons. First, a lower bound in terms of diameter of $C$ does not help us achieve a lower bound on $\FW(C,k)$. And secondly, the upper bound can be significantly improved when the convex region is ``skinny'' (it is easy to show that the Fermat-Weber of a very long skinny rectangle is approximately $0.25dA$). The authors in \cite{AbuAffash} arrive at their upper bound by making using of the fact that a convex object of diameter $d$ can always be contained in a circle of radius $d/\sqrt{3}$. Rather than using a bounding circle, we find it more useful to derive bounds in terms of a bounding box that contains $C$. This is because for a long and skinny region, the area of the minimum bounding circle can be infinitely larger than the area of the convex region itself. But if one considers the minimum bounding box of $C$, we can prove that the ratio of its area to that of the convex polygon is at most 2 for any convex region (see equation \eqref{eq:BoxAreaRelation}).

In the following sections, we will derive lower and upper bounds for a convex polygon $C$ of area $A$ that is contained within a rectangle of dimensions $w$ and $h$, where $A \in [0,wh]$ (this rectangle need not necessarily be the minimum bounding box of $C$). The Fermat-Weber value of the rectangle itself is denoted using $\FW_{\rectang}(w,h)$. A closed form expression for $\FW_{\rectang}(w,h)$ is given in Section \ref{sec:FWCommonObjects} of the Online Supplement.

\subsection{Lower bounding $\FW(C)$}
\label{sec:OneMedianLB}
Here, we derive a simple lower bound that we will use to place an overall lower bound on $\FW(C,k)$. For a given area $A$, it is a well known fact that disk is the shape that minimizes its Fermat-Weber value. Hence, using the Fermat-Weber of a disk, we can arrive at an easy lower bound on $\FW(C)$: $\Phi_{LB_1}(A) = \frac{2}{3\sqrt{\pi}}A^{3/2}$ (see Remark \ref{rem:disk} in Section \ref{sec:FWCommonObjects} of the Online Supplement). In most cases, this simple lower bound is large enough for our analysis. But, in some cases, we have skinny regions and there might be a big gap between this simple lower bound and the actual Fermat-Weber value. For such cases we consider the region to be the intersection of a disk of radius $r$ and a slab of height $h$ with $r\geq\frac{h}{2}$. As you can also see in Figure \ref{fig:LB}, the area of such region can be written in terms of $r$ and $h$. Calculating the Fermat-Weber of such region we obtain another lower bound on $\FW(C)$: $\Phi_{LB_2}(A_{(r,h)})=\frac{4r^{3}}{3}\sin^{-1}\left(\frac{h}{2r}\right)+\frac{1}{3}rh\sqrt{r^{2}-\frac{h^{2}}{4}}+\frac{1}{12}h^{3}\log\left(\frac{2r+\sqrt{4r^{2}-h^{2}}}{h}\right)$. Combining these with the fact that the region is contained within a box of dimensions $w$ and $h$, we can obtain a comprehensive lower bound as summarized in the following lemma. 

\begin{lem} 
Let $C$ be a convex region with area $A$, contained in a box $B$ of dimensions $w$ and $h$, with $w\geq h$. If $B'$ is a horizontal slab of height $h$ that contains $B$ and $D$ is a disk of radius $r$ centered at the centroid of the box such that $\Area\left(D\cap B'\right)=A$, then:
\end{lem}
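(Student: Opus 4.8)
The plan is to prove the bound by a rearrangement (``bathtub'') argument: among all regions of a fixed area $A$ that lie inside the slab $B'$, the Fermat--Weber value is minimized by the intersection of $B'$ with a disk, and since $C$ is one such region this intersection supplies the desired lower bound. Let $p^\ast$ be the Fermat--Weber point of $C$, so that $\FW(C)=\iint_C\|x-p^\ast\|\,dA$. I would compare $C$ against the ``nearest-points'' region $C^\ast=\{x\in B':\|x-p^\ast\|\le r^\ast\}$, where $r^\ast$ is chosen so that $\Area(C^\ast)=A$; by construction $C^\ast$ is exactly a disk--slab intersection centered at $p^\ast$.

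First I would establish the rearrangement inequality $\iint_C\|x-p^\ast\|\,dA\ge\iint_{C^\ast}\|x-p^\ast\|\,dA$. Since $C$ and $C^\ast$ have equal area, $\Area(C\setminus C^\ast)=\Area(C^\ast\setminus C)$. Every point of $C^\ast\setminus C$ lies within distance $r^\ast$ of $p^\ast$, while every point of $C\setminus C^\ast$ lies at distance at least $r^\ast$; transferring this equal amount of mass from the far set onto the near set can only decrease the integral, which yields the inequality.

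Next I would reduce the center to the midline of the slab. Writing $g(p)=\iint_{D_{r(p)}(p)\cap B'}\|x-p\|\,dA$ for the Fermat--Weber value of the area-$A$ disk--slab intersection centered at $p$ (with $r(p)$ fixed by the area constraint), a symmetry argument shows that $g$ is minimized when $p$ lies on the horizontal midline of $B'$: if $p$ is off-center, the disk is clipped asymmetrically by the nearer wall, forcing a larger radius $r(p)$ to recover the area $A$ and thereby pushing mass farther from $p$. Since the centroid $c$ of the box $B$ lies on this midline (the box and slab share the vertical extent $h$) and the slab is invariant under horizontal translation, evaluating $g$ at $c$ attains the minimum; a direct integration of $\iint_{D_r\cap B'}\|x-c\|\,dA$ in polar/Cartesian coordinates then produces the recorded closed form $\Phi_{LB_2}(A_{(r,h)})$. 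Chaining the inequalities gives $\FW(C)=\iint_C\|x-p^\ast\|\,dA\ge g(p^\ast)\ge g(c)=\Phi_{LB_2}(A_{(r,h)})$, and combining this with the unconstrained disk bound $\Phi_{LB_1}(A)$ yields the comprehensive lower bound asserted by the lemma.

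The main obstacle I anticipate is the second reduction, namely the optimality of the midline center. The rearrangement inequality of the first step is the standard bathtub principle, but proving that the slab-constrained value $g(p)$ is genuinely minimized on the midline requires a careful monotonicity/convexity argument as $p$ moves across the slab, since both the shape of the clipped region and the equalizing radius $r(p)$ vary simultaneously. The explicit evaluation producing $\Phi_{LB_2}$ is then routine integration, which I would defer to the formula already stated above.
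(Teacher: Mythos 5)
Your proposal is correct and follows essentially the same route as the paper: both reduce the problem to the Fermat--Weber value of the area-$A$ disk--slab intersection $C^{*}$ and then evaluate that integral by direct computation (the paper decomposes $C^{*}$ into right triangles and circular sectors). The only difference is that the paper simply \emph{asserts} that $C^{*}$ minimizes the Fermat--Weber value among area-$A$ subsets of the slab, whereas you supply the missing justification --- the bathtub rearrangement about the Fermat--Weber point of $C$ together with the midline-optimality of the center --- so your argument is, if anything, more complete than the one given in the paper.
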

\begin{equation}
\FW\left(C\right)\geq\begin{cases}
\frac{2}{3}\pi r^{3} & \mbox{if }r<\frac{h}{2}\\
\frac{4r^{3}}{3}\sin^{-1}\left(\frac{h}{2r}\right)+\frac{1}{3}rh\sqrt{r^{2}-\frac{h^{2}}{4}}+\frac{1}{12}h^{3}\log\left(\frac{2r+\sqrt{4r^{2}-h^{2}}}{h}\right) & \mbox{if }r\geq\frac{h}{2}
\end{cases}
\label{eq:LowerBound}
\end{equation}

\begin{proof}

\begin{figure}[t]
  \centering
  \subfloat[Region $C^{*}$ of minimal Fermat-Weber.]{\label{fig:hexagon1} \includegraphics[width=0.3\textwidth]{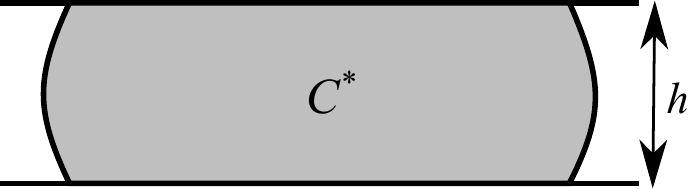}}\,\,\,\,\,\,\,\,\,\,\,\,\,\,\,\,\,\,\,\,\,\,\,
  \subfloat[Breaking $C^{*}$ into triangles and sectors.]{\label{fig:hexagon2} \includegraphics[width=0.3\textwidth]{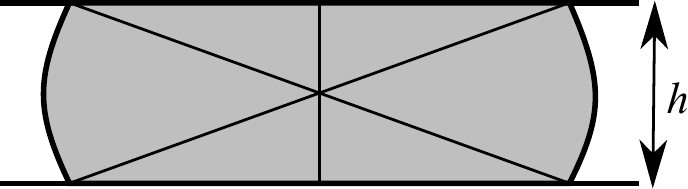}}
  \caption{A narrow convex region of area $A$ formed by intersection of a disk with radius $r$ and a slab of height $h$, that provides a lower bound for the Fermat-Weber value.}
  \label{fig:LB}
\end{figure}

Refer to Figure \ref{fig:LB} for this proof. The shape $C^{*}$ that minimizes $\FW\left(C\right)$ in $B'$ is the intersection of a disk of radius $r$ with a slab of height $h$, and its Fermat-Weber value varies according to $r$ and $h$.

\paragraph{Case~1:} If $r < h/2$, then $C^{*}$ is a disk and hence we have $\FW\left(C^{*}\right)=\frac{2}{3}\pi r^{3}$ (from Lemma \ref{lem:trivial-disk} in Section \ref{sec:FWCommonObjects} of the Online Supplement).

\paragraph{Case~2:} If $r \geq h/2$, then $C^{*}$ is the intersection
of a disk and a slab of height $h$, and the Fermat-Weber of such
a region can be found out by evaluating the integral $\iint_{C^{*}}\sqrt{x_{1}^{2}+x_{2}^{2}}\, dx_{1}\, dx_{2}\,$. An easy way to do this is to break $C^{*}$ into regions consisting
of right-angled triangles and circular sectors (Figure \ref{fig:hexagon2})
and then use Lemmas \ref{lem:trivial-disk} and \ref{lem:FWTriangle} of Section \ref{sec:FWCommonObjects} of the Online Supplement to arrive at the lower bound.
\end{proof} 
We will denote the lower bound in (\ref{eq:LowerBound}) using $\Phi_{LB}\left(A,h\right)=\FW\left(C^{*}\right)$. 
Note that the $\Phi_{LB}$ is independent of the width of the rectangle (since we have assumed that $w\geq h$).

\subsection{Lower Bounds on $\FW(C,k)$}
\label{sec:LB}

\begin{figure}
  \centering
  \subfloat[]{\label{fig:lb_1} \includegraphics[width=0.2\textwidth]{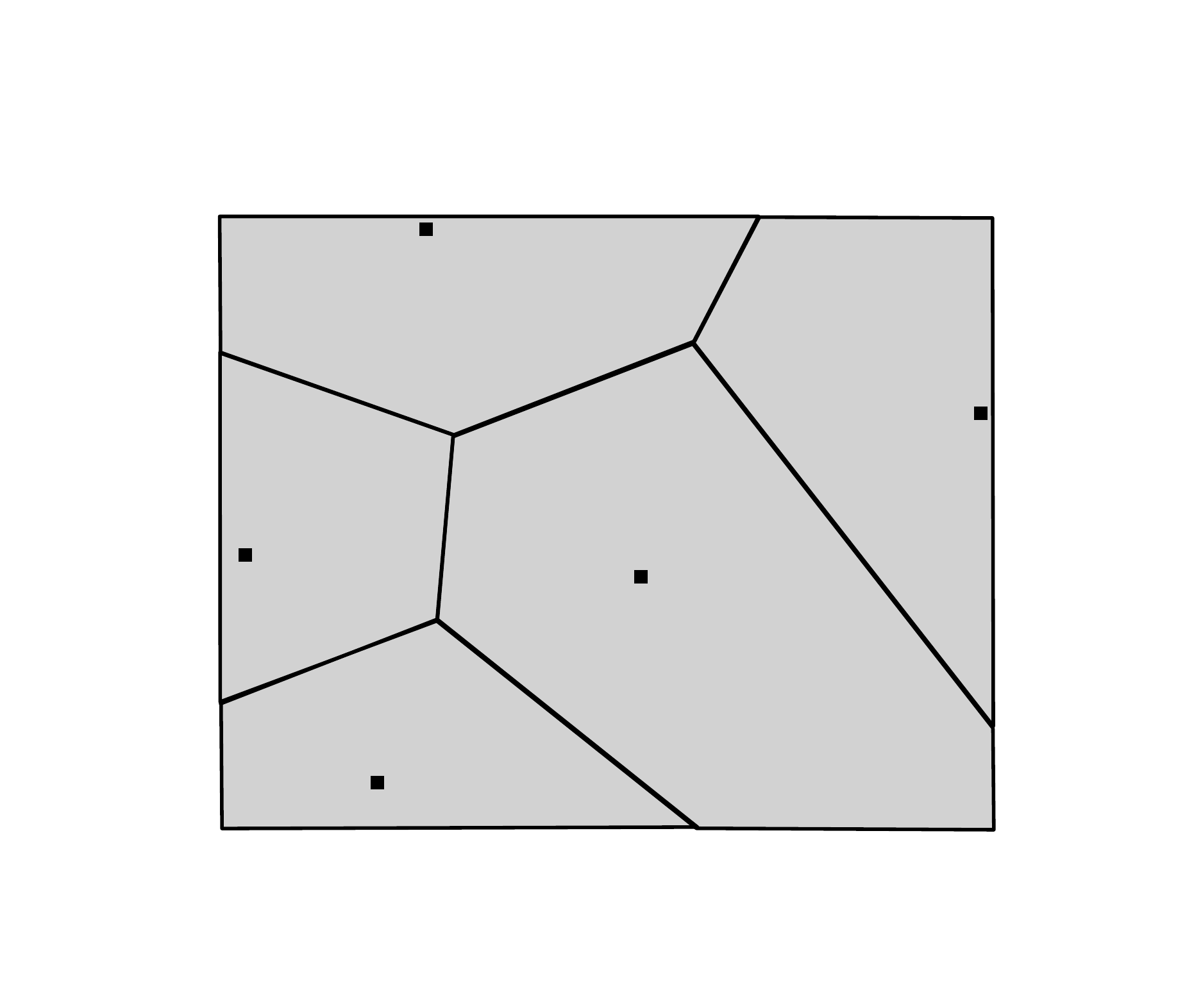}}\,\,\,\,\,\,
  \subfloat[]{\label{fig:lb_2} \includegraphics[width=0.2\textwidth]{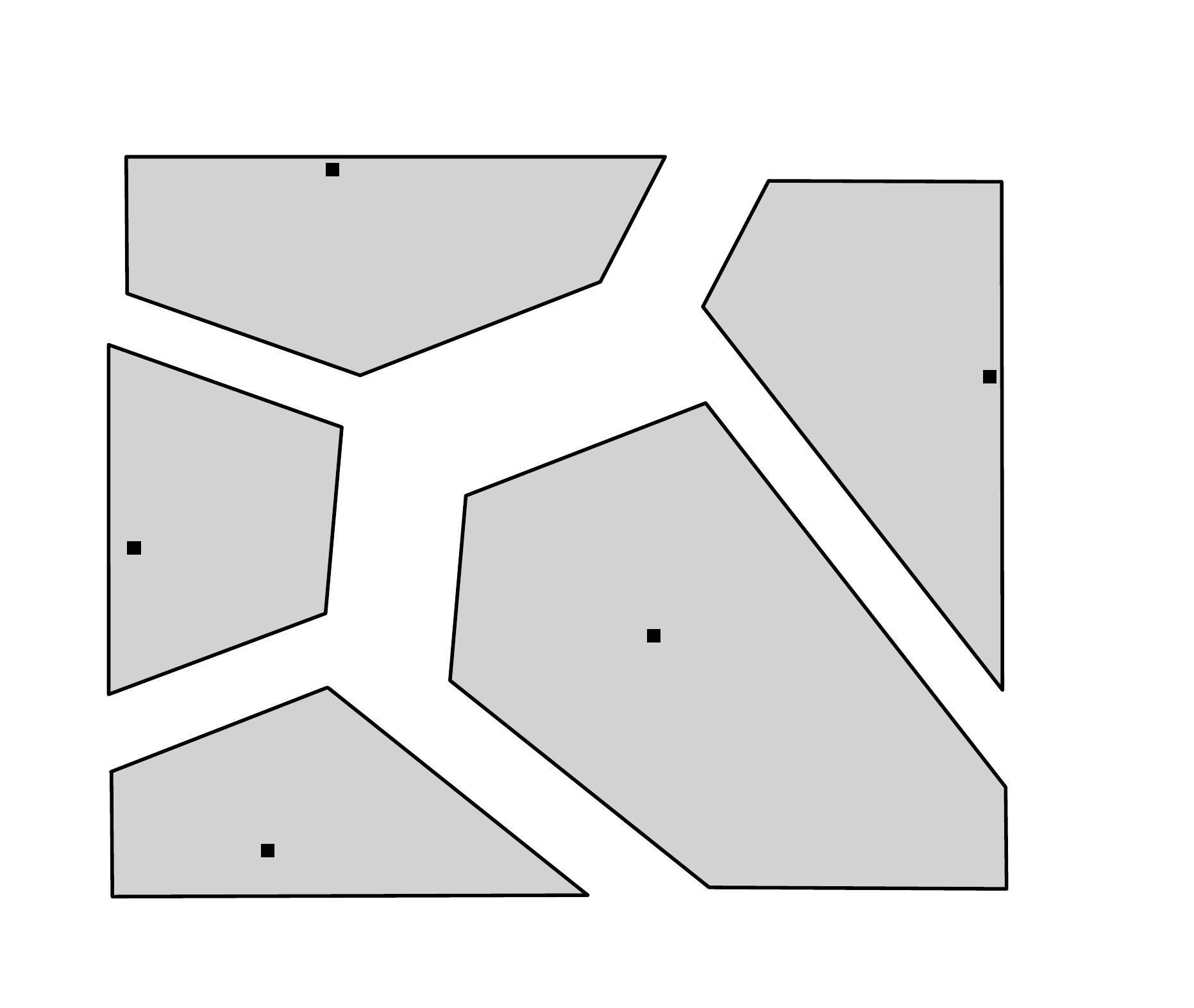}}\,\,\,\,\,\,
  \subfloat[]{\label{fig:lb_3} \includegraphics[width=0.2\textwidth]{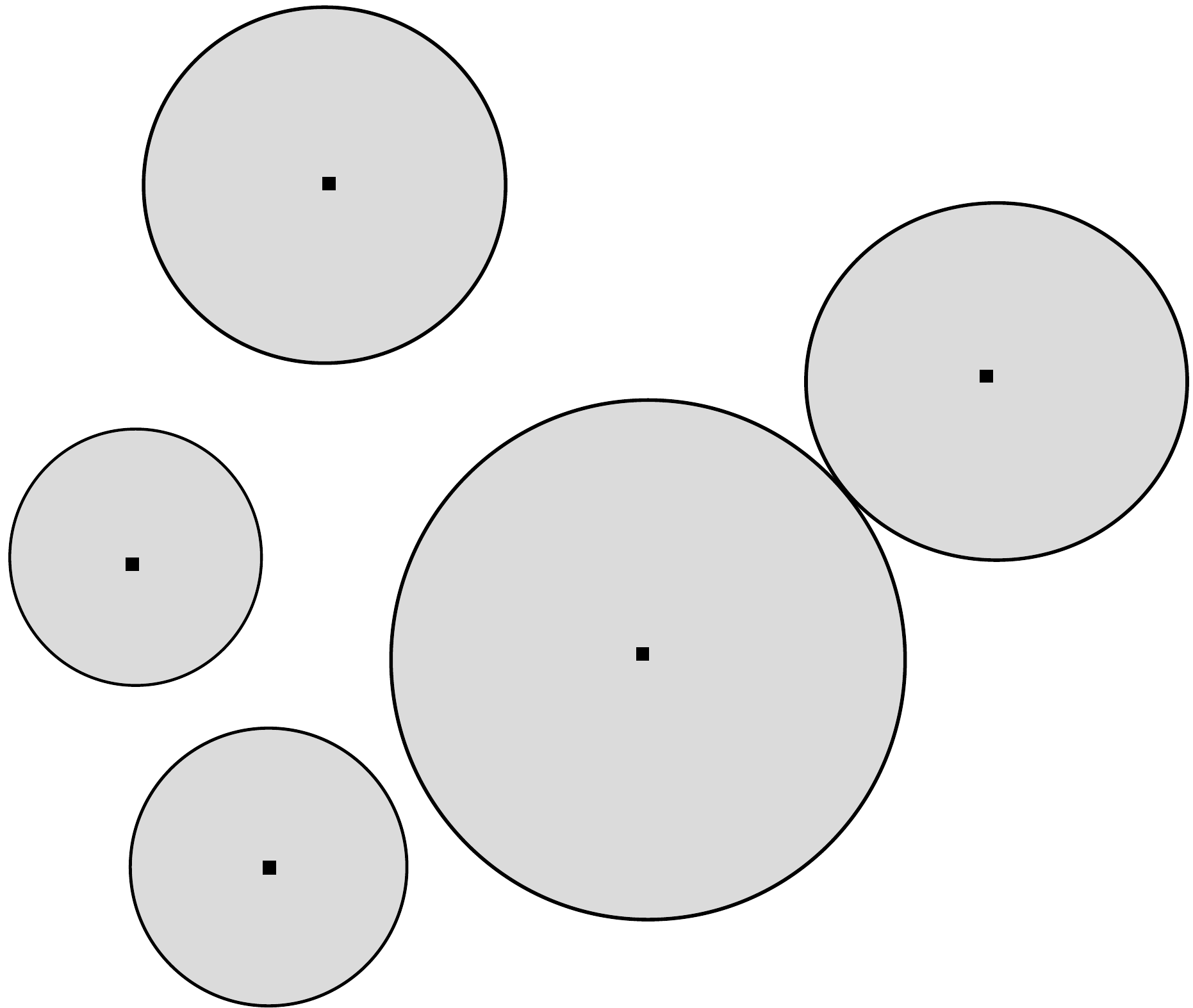}}\,\,\,\,\,\,
  \subfloat[]{\label{fig:lb_4} \includegraphics[width=0.2\textwidth]{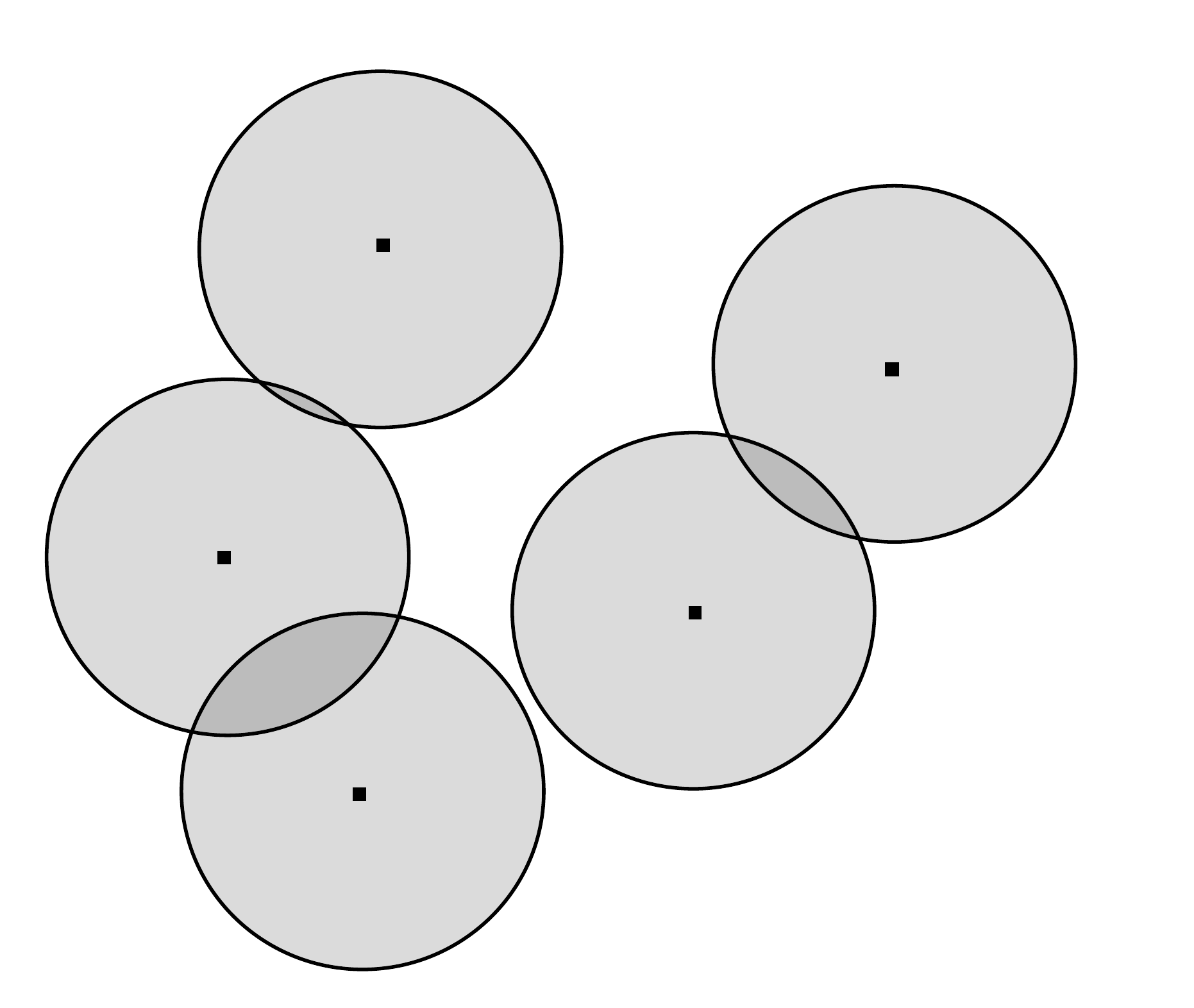}}
  \caption{Lower bounding $\FW(C,k)$: Fermat-Weber of each Voronoi cell can be lower bounded by a disk. The sum of Fermat-Weber of disks is minimized when the areas of all disks are equal, which follows from convexity of $\Phi_{LB_1}$. The same argument holds if we were to replace the disks by slabs ($\Phi_{LB_2}$) shown in Figure \ref{fig:hexagon1}.}
  \label{fig:LowerBoundFWk}
\end{figure}

Let set of points $\{p^*_1,...,p^*_k\}$ be optimum solution for polygon $C$, which is itself embedded in a box of dimensions $w$ and $1$. If each of $k$ points is assigned area $A^*_{i}$ (i.e., if the area of the Voronoi cell associated with the $i^{th}$ point is $A^*_{i}$), then the Fermat-Weber value of $C$ must satisfy (refer to Figure \ref{fig:LowerBoundFWk}):
\[
\FW\left(C,k\right) \geq\sum_{i=1}^{k}\Phi_{LB_1}\left(A^*_{i}\right)\,
\]
Since each of the summands is \emph{convex} as a function of $A^*_{i}$, the right-hand side is \emph{minimized} when each of the points is assigned an equal area $A^*_{i}=A/k$ (Figure \ref{fig:lb_4}). We therefore have: 
\begin{equation}
\label{eq:FirstLB}
\FW\left(C,k\right) \geq k\cdot\Phi_{LB_1}\left(A/k\right)
\end{equation}
Replacing $\Phi_{LB_1}$ with $\Phi_{LB}$ gives us a better lower bound and because $\Phi_{LB}$ is also convex in $A$, all the arguments still hold. 
\begin{equation}
\FW\left(C,k\right)\geq k\cdot\Phi_{LB}\left(A/k,1\right)\,.
\label{eq:FinalLB}
\end{equation}

\subsection{Upper bounding $\FW\left(C\right)$}
\label{sec:UB}
To find an upper bound on $\FW(C)$, we will answer the following question: What is the shape of the convex region of a given area $A$, that is contained within a box $w$ and $h$ and has the maximum Fermat-Weber quantity? Intuitively skinnier regions have larger Fermat-Weber values. But to find the exact shape of such a region is not easy. Interestingly, this becomes tractable if $A = wh/2$. In Lemma \ref{lem:UBHalfRectangle} we will provide an upper bound for the case $A = wh/2$. And in Lemma \ref{lem:UB}, we will extend this result to find an upper bound for all $A \in [0,wh]$. Here on, we will use $\FW_{1/2}(w,h)$ to denote $0.5\FW_{\rectang}(w,h)$, which is half the Fermat-Weber value of the rectangle. 

\begin{lem}
\label{lem:UBHalfRectangle} Let $C$ be a convex region of area $A$ that is contained within a box $B$ of dimensions $w$ and $h$. If $A = \frac{wh}{2}$, then we have:
\[
\FW(C) \leq {\FW}_{1/2}(w,h) 
\]
\end{lem}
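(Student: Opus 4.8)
The plan is to bound $\FW(C)$ from above by evaluating the Fermat--Weber integral at a single convenient point, namely the center $c$ of the box $B$. Since $\FW(C)=\min_p \iint_C \|x-p\|\,dA \le \iint_C\|x-c\|\,dA$, and since $B$ is centrally symmetric so that its own Fermat--Weber point is exactly $c$ (giving $\FW_{\rectang}(w,h)=\iint_B\|x-c\|\,dA$ and hence $\FW_{1/2}(w,h)=\tfrac12\iint_B\|x-c\|\,dA$), it suffices to prove
\[
\iint_C \|x-c\|\,dA \;\le\; \tfrac12\iint_B\|x-c\|\,dA .
\]
Writing $g(x)=\|x-c\|$ and using $\Area(C)=\Area(B\setminus C)=wh/2$, this is equivalent to the statement that the convex half $C$ carries no more $g$-mass than its complement, i.e. $\iint_C g\,dA \le \iint_{B\setminus C} g\,dA$. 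This is the inequality I would establish.

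Second, I would exploit the central symmetry of $B$. Let $\sigma$ be the point reflection through $c$; then $\sigma(B)=B$, $g\circ\sigma = g$, and $\sigma$ is area preserving. Splitting $C$ as $(C\cap\sigma C)\cup(C\setminus\sigma C)$ and noting that $\sigma$ carries $C\setminus\sigma C$ bijectively onto $\sigma C\setminus C\subseteq B\setminus C$ while preserving $g$, those two pieces contribute equally to the two sides of the desired inequality. The comparison therefore collapses to a ``core versus shell'' inequality $\iint_I g\,dA \le \iint_O g\,dA$, where $I:=C\cap\sigma C$ and $O:=B\setminus(C\cup\sigma C)$. Both $I$ and $O$ are centrally symmetric, and a short area count gives $\Area(I)=\Area(B)-\Area(C\cup\sigma C)=\Area(O)$, so they have equal area. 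Crucially, $I$ is an intersection of convex sets, hence convex, and being centrally symmetric and nonempty it contains $c$, the global minimizer of $g$; while $O$ is disjoint from $I$ and thus avoids a neighborhood of $c$.

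Finally I would prove $\iint_I g\,dA \le \iint_O g\,dA$ by a concentration (mass-transport) argument using the co-area formula for $g$. Letting $i(r)$ and $o(r)$ denote the lengths of the circle $\{g=r\}$ lying in $I$ and in $O$ respectively, equal areas give $\int_0^\infty i(r)\,dr=\int_0^\infty o(r)\,dr$, and the claim is $\int_0^\infty r\,i(r)\,dr\le\int_0^\infty r\,o(r)\,dr$. Because $I$ is convex and star-shaped about $c$, its radial profile is front-loaded: $i(r)=r\,\abs{\{\theta:\rho(\theta)\ge r\}}$ for the radial function $\rho$ of $I$, so $i$ dominates $o$ for small $r$ (indeed $o\equiv 0$ while $i$ fills entire circles near $c$) and is dominated for large $r$ (where $I$ has already terminated but $O$ still reaches toward the corners). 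Establishing that $o(r)-i(r)$ changes sign exactly once, from negative to positive, is the heart of the argument; once this single-crossing property is in hand, the equal-mass condition $\int_0^\infty(o-i)\,dr=0$ forces $\int_0^\infty r\,(o-i)\,dr\ge 0$ by the standard ``mass moved outward'' estimate, completing the proof. I expect this single-crossing step to be the main obstacle, since $\{g=r\}\cap B$ itself grows and is then clipped by the sides of $B$, so the two radial profiles must be controlled jointly with the geometry of the box rather than in isolation.
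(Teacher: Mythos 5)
Your setup is sound and genuinely different from the paper's route: the reductions $\FW(C)\le\FW(C,c)$, the identification $\FW_{1/2}(w,h)=\tfrac12\iint_B\|x-c\|\,dA$, and the symmetrization bookkeeping are all correct. But be aware that the symmetrization step carries no content: since $\mathbf{1}_I-\mathbf{1}_O=\mathbf{1}_C+\mathbf{1}_{\sigma C}-\mathbf{1}_B$ and $g\circ\sigma=g$, one has the identity $\iint_I g-\iint_O g=2\iint_C g-\iint_B g$, so the ``core versus shell'' inequality is an exact restatement of the lemma, not a reduced problem. All of the difficulty is therefore concentrated in the single-crossing claim, which you do not prove and explicitly flag as the main obstacle. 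This is a genuine gap, and not a routine one. First, the structural facts you isolate ($I$ convex, centrally symmetric, containing $c$; $O$ disjoint from $I$; equal areas) are provably insufficient to yield $\iint_I g\le\iint_O g$: take $I$ a long thin symmetric needle through $c$ and $O$ an equal-area blob hugging $I$ near $c$; then $\iint_I g$ vastly exceeds $\iint_O g$. So any proof of single crossing must use the joint geometry of $C$, $\sigma C$, and $B$, which you have not supplied. Second, the claimed sign pattern is itself in doubt in general position: your heuristic ``$o\equiv 0$ near $c$ while $I$ has terminated for large $r$'' fails whenever $C$ contains neighborhoods (in $B$) of two opposite corners of $B$, for then $I$ reaches the circumradius of $B$ and both $i(r)$ and $o(r)$ are positive there; the sign of $o(r)-i(r)$ near the circumradius depends on how $C$ clips the other two corners, and if it is negative the pattern is $-,+,-$ and the mass-transport step collapses. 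Until single crossing (or some substitute monotonicity) is actually established, the proof is incomplete at its only substantive step.

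For contrast, the paper sidesteps the layer-cake comparison entirely with an extremal deformation argument: it observes that every region obtained by cutting $B$ with a line through $x_0$ has Fermat--Weber value exactly $\FW_{1/2}(w,h)$ relative to $x_0$, and then shows that an arbitrary convex $C$ of area $wh/2$ can be transformed into such a region while only increasing $\FW(C,x_0)$, using convexity of the Fermat--Weber value of a translated segment (Lemma \ref{lem:line-segment-convex}) and of a sheared triangle (Lemma \ref{lem:shearing-convex}) so that each one-parameter deformation is pushed to an endpoint. If you want to salvage your approach, you would need either to prove the single-crossing property using the convexity of $C$ itself (not just of $I$), or to replace it with a different mechanism for comparing the radial profiles.
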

\begin{proof}
\begin{figure}[htb]
\begin{centering}
\subfloat[\label{fig:eq-FW-C-1}]{\begin{centering}
\includegraphics[width=0.15\columnwidth]{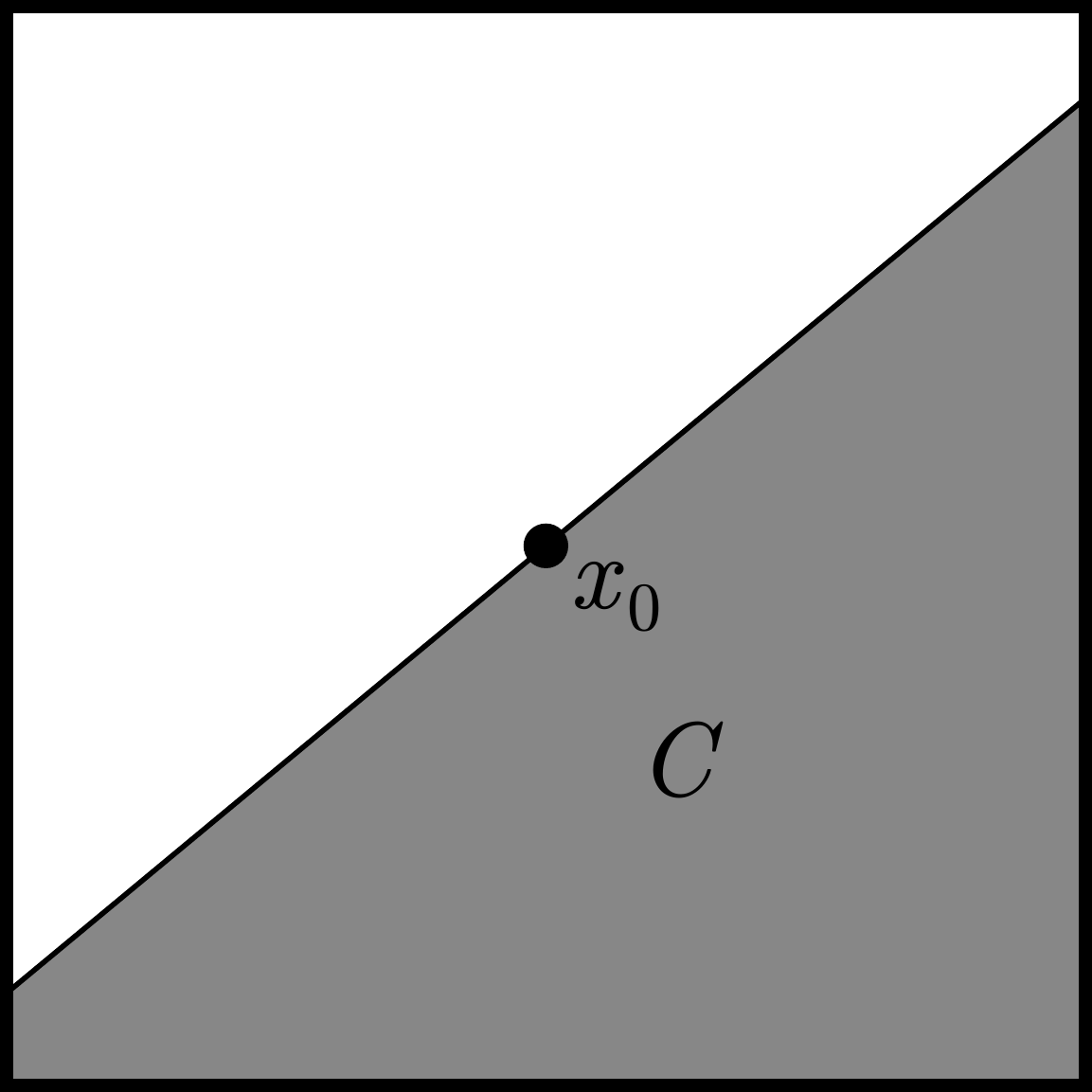}
\par\end{centering}

}\subfloat[\label{fig:eq-FW-C-2}]{\begin{centering}
\includegraphics[width=0.15\columnwidth]{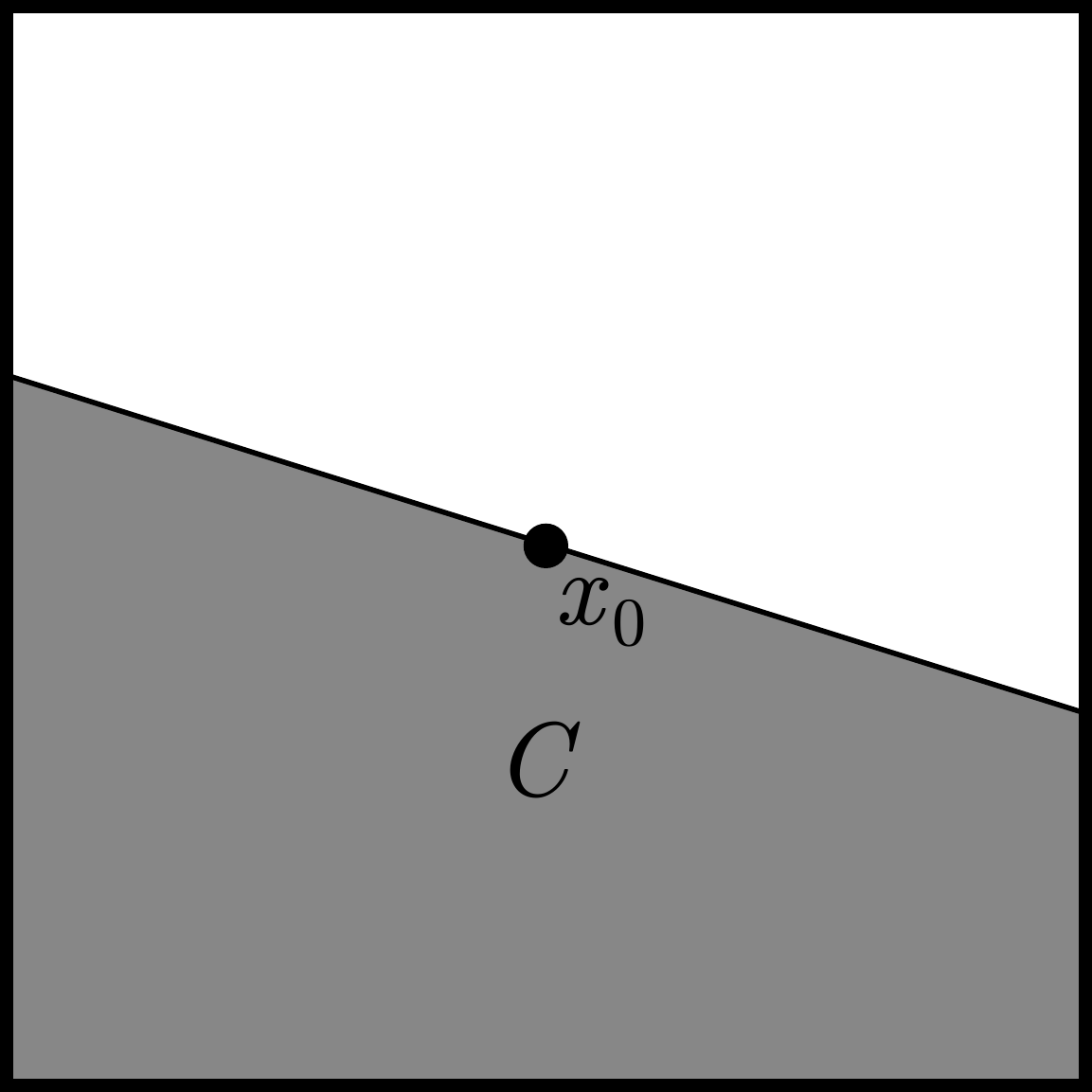}
\par\end{centering}

}\subfloat[\label{fig:eq-FW-C-3}]{\begin{centering}
\includegraphics[width=0.15\columnwidth]{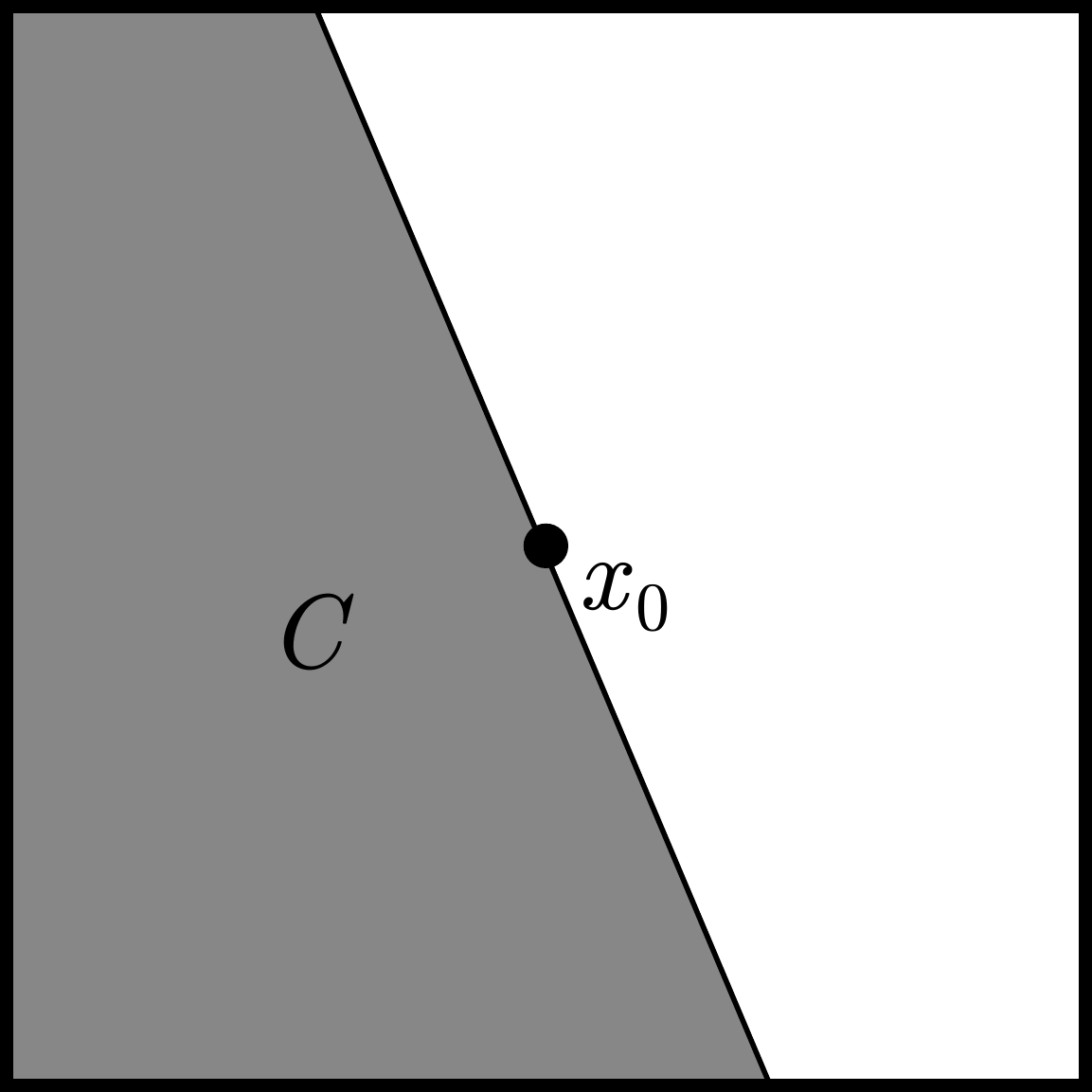}
\par\end{centering}

}
\par\end{centering}

\caption{\label{fig:eq-FW-C}It is obvious that all regions $C$ above have
the same Fermat-Weber value relative to $x_{0}$, namely $\FW_{1/2}(w,h)$. \vspace{5pt}}
\end{figure}
Let $x_{0}$ denote the center of $B$. By definition of Fermat-Weber, for any convex region $C$ we have: $\FW(C) \leq \FW(C,x_{0})$. Consider the family $\mathcal{C}$
consisting of all convex regions $C$ in $B$ which merely consist
of the area lying to one side of a line through $x_{0}$, as shown
in Figure \ref{fig:eq-FW-C}. It is obvious that, for any region $C^{'}\in\mathcal{C}$,
we have $\FW(C^{'},x_{0})=\FW_{1/2}(w,h)$ by a trivial symmetry argument.
Therefore, it will suffice to show that we can take \emph{any} convex
region $C$ in $B$ with area $wh/2$ and transform it into a region
$C^{'}\in\mathcal{C}$ in a manner that does not decrease $\FW(C,x_{0})$
. This will basically prove our result: $\FW(C) \leq \FW(C,x_{0}) \leq \FW(\mathcal{C},x_{0}) = \FW_{1/2}(w,h)$. We will show how to execute this transformation after making two
straightforward observations:
\begin{lem}
\label{lem:line-segment-convex}Let $s$ denote a line segment in
the plane and let $\vec{v}$ be a vector in the plane, so that $s+t\vec{v}$
is a translation operator on $s$ in the direction $\vec{v}$. Then
the function $f(t)=\int_{s+t\vec{v}}||x||\, dx$ is convex in $t$.\end{lem}
\begin{proof}
The function $f(t)$ is simply the Fermat-Weber value of the line
segment $s+t\vec{v}$ relative to the origin; it is an infinite sum
of convex functions and is therefore itself convex.\end{proof}
\begin{lem}
\label{lem:shearing-convex}For any triangle $T$ with vertices $\{x_{1},x_{2},x_{3}\}$,
let $T(t)$ denote the sheared triangle $\{x_{1},x_{2},x_{3}+t(x_{2}-x_{1})\}$.
Then the function $g(t)=\FW(T(t),x_{0})$ is convex in $t$ for any
fixed $x_{0}$.\end{lem}
\begin{proof}
This is a corollary of Lemma \ref{lem:line-segment-convex} because
$g(t)$ is an infinite sum of convex functions $f(t)$.
\end{proof}
We can now proceed to prove Lemma \ref{lem:UBHalfRectangle}. Consider any
convex region $C$ with area $wh/2$, contained inside a box $B$
with dimensions $w\times h$, as in the statement of Lemma \ref{lem:UBHalfRectangle}.

\begin{figure}[t]
\begin{centering}
\subfloat[\label{fig:vertical-segs}]{\begin{centering}
\includegraphics[width=0.15\columnwidth]{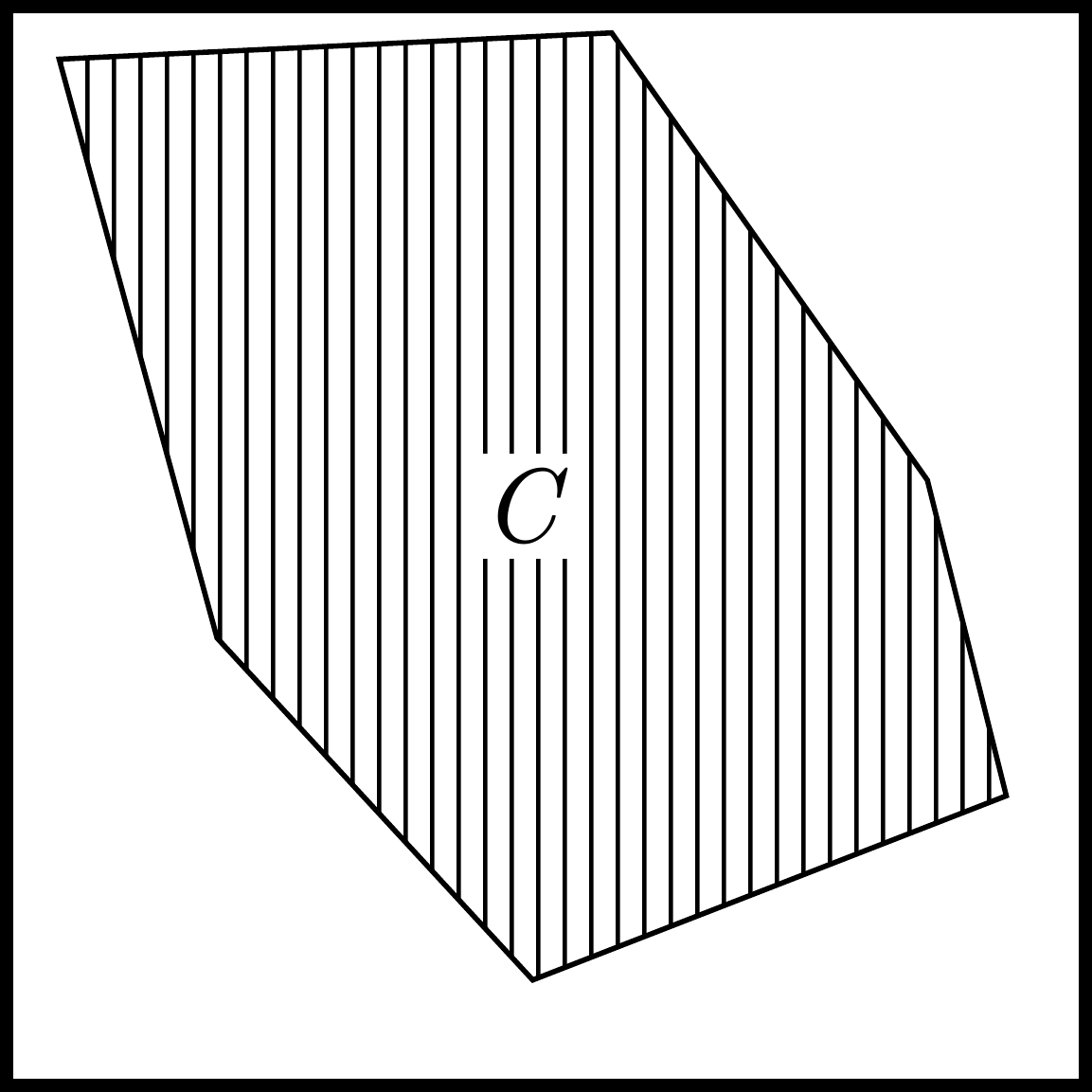}
\par\end{centering}
}\subfloat[\label{fig:shifted-to-bottom}]{\begin{centering}
\includegraphics[width=0.15\columnwidth]{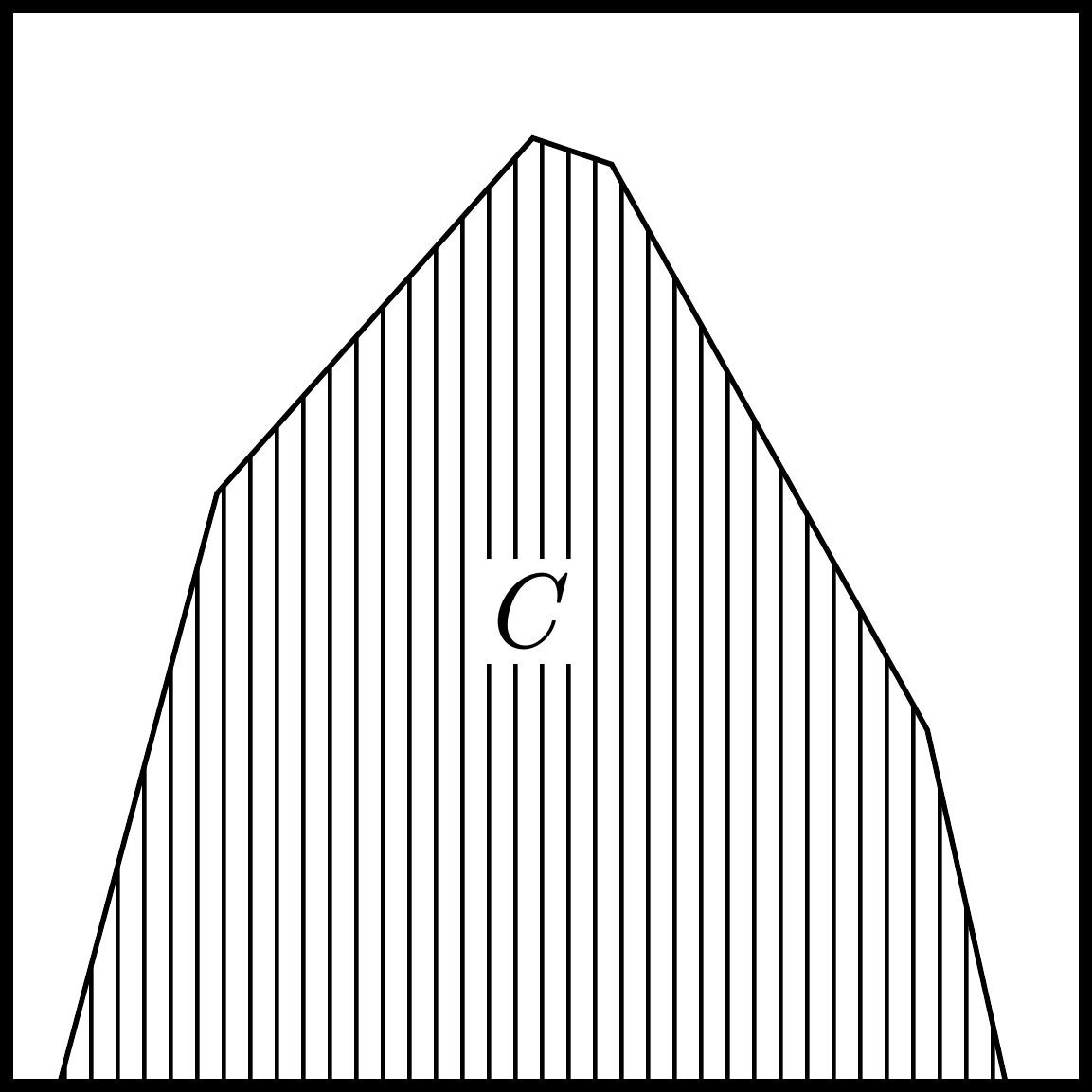}
\par\end{centering}
}
\subfloat[\label{fig:horizontal-segs}]{\begin{centering}
\includegraphics[width=0.15\columnwidth]{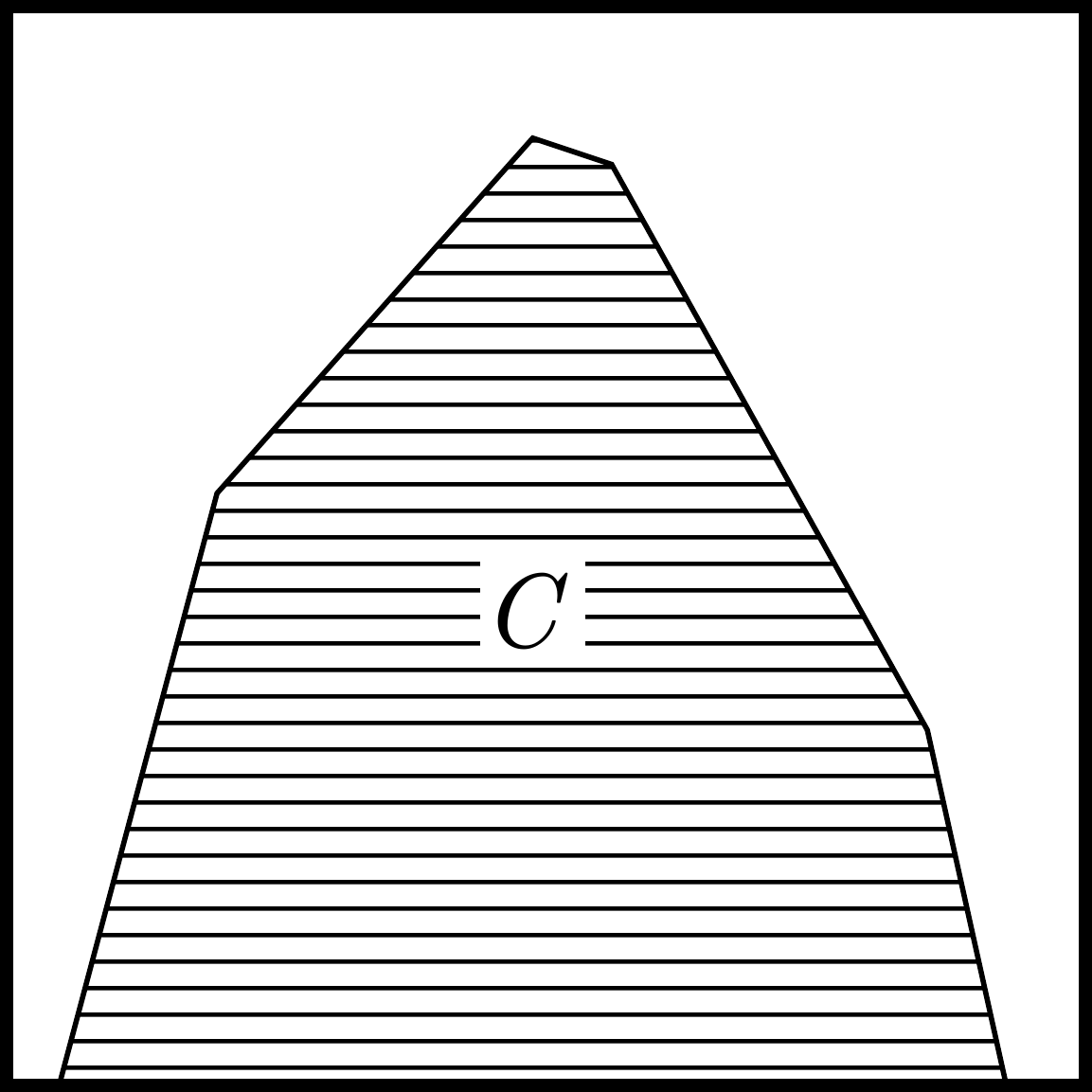}
\par\end{centering}
}\subfloat[\label{fig:shifted-to-right}]{\begin{centering}
\includegraphics[width=0.15\columnwidth]{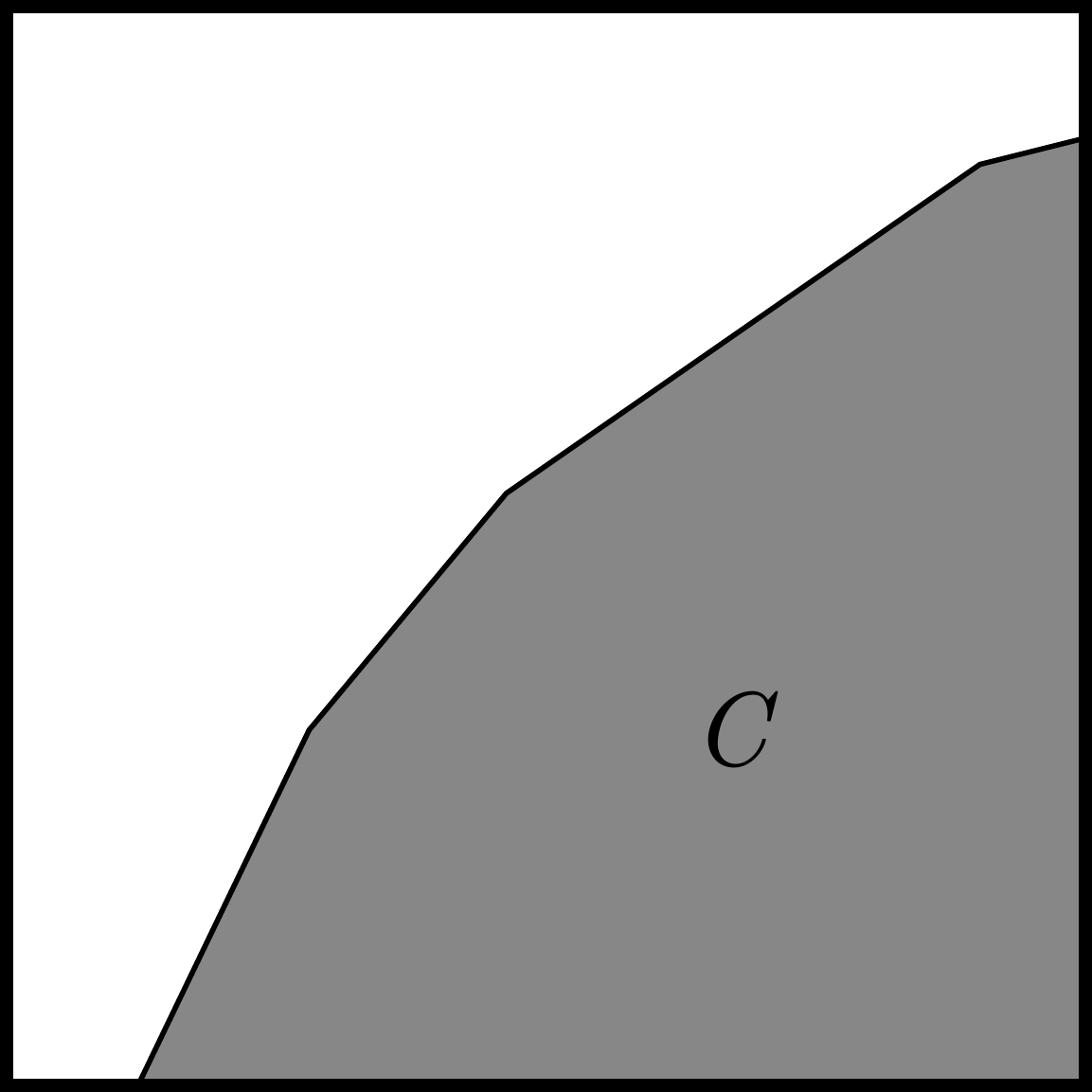}
\par\end{centering}
}
\par\end{centering}
\caption{Two applications of Lemma \ref{lem:line-segment-convex} permit us
to assume that $C$ takes the shape shown in Figure \ref{fig:shifted-to-right}.}
\end{figure}

By visualizing $C$ as a union of vertical line segments (Figure \ref{fig:vertical-segs}),we can apply Lemma \ref{lem:line-segment-convex} to assume without loss
of generality that $C$ takes the shape shown in Figure \ref{fig:shifted-to-bottom}, i.e., that $C$ is the area lying below some concave function (this is because the function defined in Lemma \ref{lem:line-segment-convex} is convex and therefore is maximized when each line segment is as far from the origin, $x_0$, as possible). By visualizing $C$ as a union of \emph{horizontal} line segments (Figure \ref{fig:horizontal-segs}) we can subsequently assume without loss of generality that $C$ takes the shape shown in Figure \ref{fig:shifted-to-right}, i.e., that $C$ is the area lying below some concave and increasing function. By using a series of shear transformations and iteratively applying Lemma \ref{lem:shearing-convex}, we can show that the region $C$ takes the form shown in Figure \ref{fig:eq-FW-C} and in doing so, we would have only increased the Fermat-Weber value of $C$ relative to $x_0$, the center of $B$. Due to space constraints, these transformations are described in detail in section \ref{sec:sheartransformations} of the Online Supplement and we conclude the proof here.

\end{proof}

\begin{lem}
\label{lem:UB}
\label{UpperBound} Let $C$ be a convex region contained in a box $B$ of dimensions $w\times h$ with $w\geq h$ such that $C$ contains a horizontal line segment whose length is equal to that of $\width(C)$. If $A=\Area(C)$, then we have:
\begin{equation}\label{eq:upperb}
\FW(C)\leq  {\FW}_{1/2}\left(w,\frac{2A}{w}\right)
\end{equation}
\end{lem}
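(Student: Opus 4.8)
The plan is to deduce the general statement directly from the half-rectangle case already proved in Lemma \ref{lem:UBHalfRectangle}, by showing that the chord hypothesis forces $C$ to fit inside a box of width $w$ and height \emph{exactly} $2A/w$. The decisive observation is that the assumed horizontal segment of length $\width(C)=w$ (in the paper's diameter-aligned setup this chord is simply the diameter, so its length equals $w$) pins down the vertical extent of $C$. Once the height is bounded by $2A/w$, the area condition lines up perfectly with Lemma \ref{lem:UBHalfRectangle}.

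First I would place the full-width chord on the $x$-axis, so that $C\subseteq[0,w]\times\mathbb{R}$ and the chord occupies $[0,w]\times\{0\}$. Since $(x,0)\in C$ for every $x\in[0,w]$, the upper boundary $g^{+}(x)=\sup\{y:(x,y)\in C\}$ is concave and nonnegative on $[0,w]$, while the lower boundary $g^{-}$ is convex and nonpositive. Writing $A^{+}=\int_{0}^{w}g^{+}\,dx$ and $A^{-}=\int_{0}^{w}(-g^{-})\,dx$, so that $A^{+}+A^{-}=A$, the key estimate is that any nonnegative concave function on $[0,w]$ satisfies $\int_{0}^{w}g^{+}\,dx\ge \tfrac12 w\,\max g^{+}$: its graph lies above the two chords joining the endpoints to the peak, and the area under those chords is already $\tfrac12 w\,\max g^{+}$. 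Hence $\max g^{+}\le 2A^{+}/w$ and, symmetrically, $\max(-g^{-})\le 2A^{-}/w$, so the total vertical extent of $C$ is at most $2A^{+}/w+2A^{-}/w=2A/w$.

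Consequently $C$ is contained in an axis-aligned box $B'$ of dimensions $w\times(2A/w)$, and $\Area(B')=w\cdot\tfrac{2A}{w}=2A=2\,\Area(C)$. Thus $C$ is a convex region occupying exactly half the area of $B'$, which is precisely the hypothesis of Lemma \ref{lem:UBHalfRectangle}; applying that lemma verbatim gives $\FW(C)\le \FW_{1/2}\!\left(w,\tfrac{2A}{w}\right)$, which is the claimed inequality \eqref{eq:upperb}.

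The main obstacle, and the only place the chord hypothesis is actually consumed, is the height estimate of the second paragraph. This step is genuinely necessary rather than cosmetic: a thin diagonal sliver can have width $w$ and arbitrarily large height while carrying negligible area, so for a general convex region the bound ``height $\le 2A/w$'' is false. It is exactly the existence of a horizontal chord spanning the full width that rules this out, by licensing the decomposition into a concave upper cap and a convex lower cap to which the $\int g^{+}\ge\tfrac12 w\max g^{+}$ inequality applies. Everything after the height bound is a direct invocation of Lemma \ref{lem:UBHalfRectangle}, so no further integration or estimation of Fermat--Weber values is required.
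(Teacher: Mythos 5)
Your height estimate is correct, and it is in fact a rigorous justification of a step the paper leaves implicit: its Case 1 simply asserts that ``we can always construct a smaller box $B'$ \dots such that $A=\frac{w'h'}{2}$,'' and your concavity argument ($\int_0^{w_0} g^{+}\ge\tfrac12 w_0\max g^{+}$ for the upper cap, and symmetrically for the lower cap) is exactly why such a box exists. Moreover, when $\width(C)=w$ your argument cleanly collapses the paper's two cases ($A<wh/2$ and $A\ge wh/2$) into one. The gap is that the lemma does not assume $\width(C)=w$: it only assumes $C$ lies in \emph{some} $w\times h$ box and contains a horizontal chord of length $\width(C)$. In the place where the lemma is actually invoked, the region is $R_i\cap C$ for a cell $R_i$ of the partition, and for boundary cells $\width(R_i\cap C)$ is typically strictly smaller than the width of $R_i$; your parenthetical ``this chord is simply the diameter, so its length equals $w$'' is valid only for the outermost bounding box.

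When $w_0:=\width(C)<w$, your construction places $C$ in a box of dimensions $w_0\times(2A/w_0)$ and hence gives $\FW(C)\le\FW_{1/2}\left(w_0,\frac{2A}{w_0}\right)$, which does \emph{not} imply the claimed $\FW(C)\le\FW_{1/2}\left(w,\frac{2A}{w}\right)$ and can be strictly weaker. Concretely, take $B=[0,1]^2$ and $C=[0,0.4]\times[0,0.9]$, so $A=0.36$ and the hypotheses hold: your box is $0.4\times 1.8$ (aspect ratio $4.5$) while the target rectangle is $1\times 0.72$ (aspect ratio $1.39$); both have area $2A$, and since the Fermat--Weber value of a rectangle of fixed area increases with its aspect ratio, $\FW_{1/2}(0.4,1.8)>\FW_{1/2}(1,0.72)$. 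To close the gap you need the paper's second ingredient: when $A<wh/2$, choose any box of area $2A$ with $w'\le w$ and $h'\le h$ that contains $C$ (one exists precisely because your estimate gives $\width(C)\cdot\height(C)\le 2A\le wh$), and observe that among all such boxes $\FW_{1/2}(w',h')$ is maximized at $(w',h')=(w,2A/w)$, again by the aspect-ratio monotonicity; when $A\ge wh/2$ the box $w\times(2A/w)$ already contains $B\supseteq C$ and your argument needs no repair. With that one additional comparison your proof is complete and is essentially the paper's, with the existence of $B'$ made rigorous.
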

\begin{proof}
We break up the proof into two separate cases:

\paragraph{Case~1:~$A<wh/2$}. 
Since $A<\frac{wh}{2}$ and since $C$ contains a line segment whose length is width($C$), we can always construct a smaller box $B'$ with dimensions $w'\times h'$ containing the region $C$, such that $A=\frac{w'h'}{2}$ (see Figure \ref{fig:UB1}). Applying Lemma \ref{lem:UBHalfRectangle} for $B'$, we find that $\FW\left(C\right) \leq \FW_{1/2}\left(w',h'\right)$. By performing some basic algebra, it is not hard to show that, among all values of $w'$ and $h'$
such that $w'\leq w$, $h'\leq h$ and $w'h' = 2A$, the value $FW_{1/2}\left(w',h'\right)$ is maximized when $w' = w$ and $h = 2A/w$ (Fermat-Weber value of a rectangle increases with aspect ratio). Thus, we find that:
\[
\FW(C) \leq {\FW}_{1/2}\left(w,\frac{2A}{w}\right)\,
\]

\begin{figure}[t]
  \centering
  \subfloat[$A < wh/2$]{\label{fig:UB1} \includegraphics[width=0.2\textwidth]{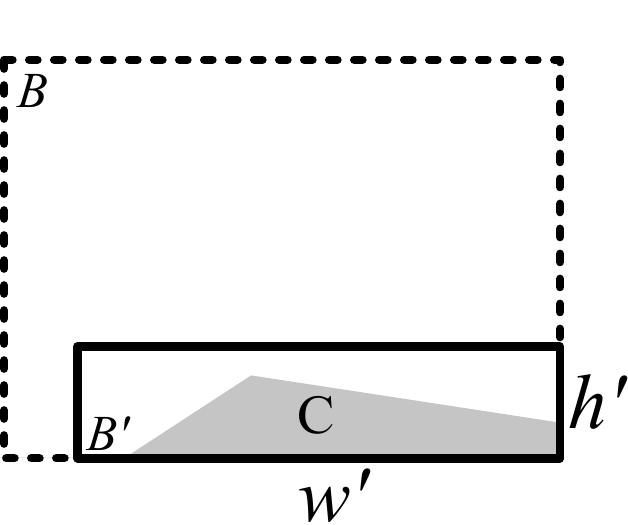}}\,\,\,\,\,\,\,\,\,\,\,\,\,\,\,\,\,\,\,\,\,\,\,\,\,\,
  \subfloat[$A \geq wh/2$]{\label{fig:UB2} \includegraphics[width=0.2\textwidth]{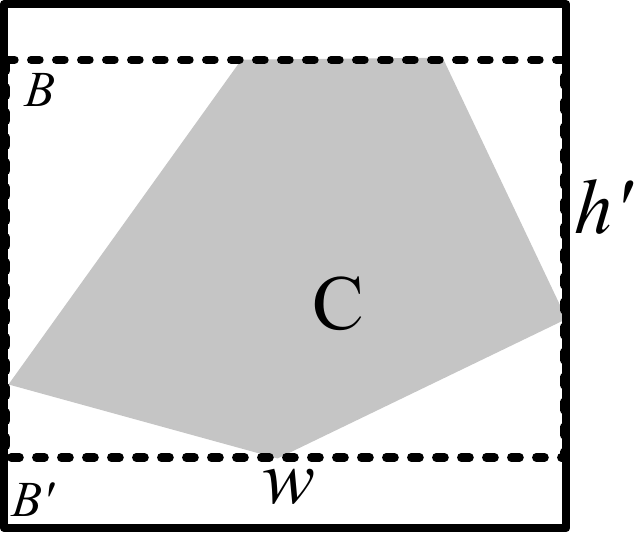}}
  \caption{We can find a box $B'$ that encloses region $C$ such that $w' h' = 2A$, $\forall A \in [0, wh]$.}
  \label{fig:UBFigure1}
\end{figure}
\paragraph{Case~2:~$A\geq wh/2$}. Since $A \geq wh/2$, we can obviously find a larger rectangle $B'$, which contains the region $C$ such that $A=\frac{w'h'}{2}$. One easy way to do that is to
increase the height of $B$ to $h'$ such that $A=\frac{wh'}{2}$ (see Figure \ref{fig:UB2}), which leads to $h'=\frac{2A}{w}\geq h$. Using Lemma \ref{lem:UBHalfRectangle} for $B'$, we arrive at the upper bound which completes the proof.
\end{proof} It is easy to verify that the upper bound of Lemma \ref{UpperBound} is convex and monotonically increasing in $A$ (see Figure \ref{fig:ConvexUB}). It also has an undesirable property that, for some interval: $A\in(A_{c},hw]$, we actually have $\FW_{1/2}\left(w,\frac{2A}{w}\right) > \FW_{\rectang}(w,h)$, which is the Fermat-Weber value of the rectangle itself. Therefore, 
\[
\Phi^2_{UB}(A,w,h) = \min\left\{{ \FW}_{1/2}\left(w,\frac{2A}{w}\right),{\FW}_{\rectang}(w,h)\right\}
\]
will be a more meaningful upper bound. But this function, $\Phi^2_{UB}$, (Figure \ref{fig:ConcaveUB}) is neither convex or concave in the desired region of interest. To help simplify the
analysis in the later section, we choose the simplest \emph{concave envelope} of the above function as our final upper bound, $\Phi_{UB}\,$:
\begin{equation}\label{eq:phiub}
\Phi_{UB}\left(A,w,h\right)=\min\left\{ \frac{A}{A_{c}}{\FW}_{\rectang}(w,h),{\FW}_{\rectang}(w,h)\right\}
\end{equation}
where $A_{c}$ is the solution of the equation: $\FW_{1/2}\left(w,\frac{2A_{c}}{w}\right)={\FW}_{\rectang}\left(w,h\right)$. Fermat-Weber value of a rectangle can be calculated using Equation (\ref{eq:FWRectangleL2}) in Lemma \ref{lem:FWRectangle} in Section \ref{sec:FWCommonObjects} of the Online Supplement.
Note that the ratio $\frac{A_{c}}{hw}$ depends only on the aspect ratio of the rectangle. Also, by construction, for a given $w$ and $h$, $\Phi_{UB}$ is a piecewise linear concave function.

\begin{figure}[htb]
  \centering
  \subfloat[\small{$FW_{1/2}\left(w,\frac{2A}{w}\right)$}]{\label{fig:ConvexUB} \includegraphics[trim = 0mm 60mm 0mm 75mm, clip, width=4.5cm]{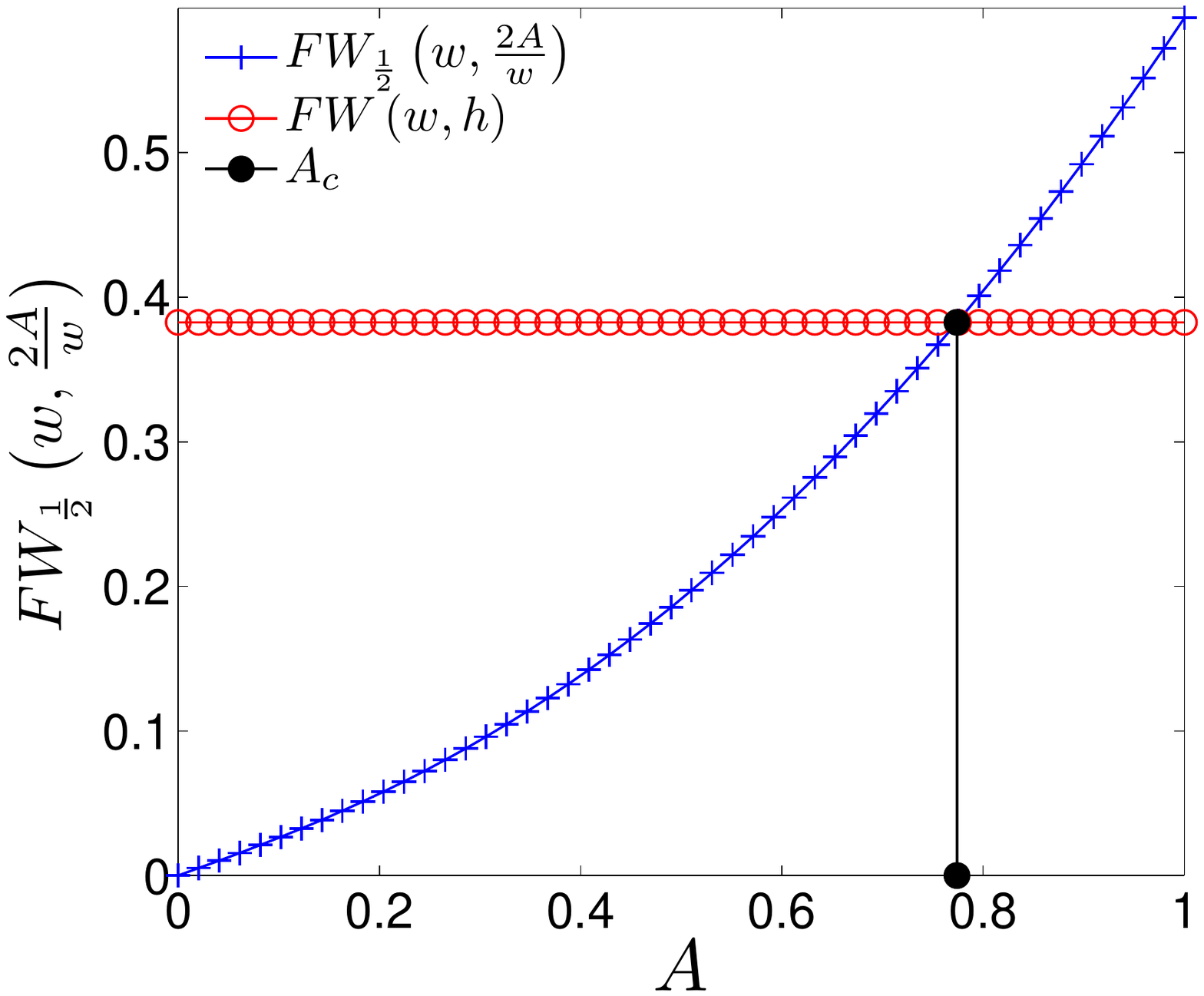}}\,\,\,\,\,
  \subfloat[Concave envelope of $\Phi^2_{UB}$]{\label{fig:ConcaveUB} \includegraphics[trim = 0mm 60mm 0mm 75mm, clip, width=4.5cm]{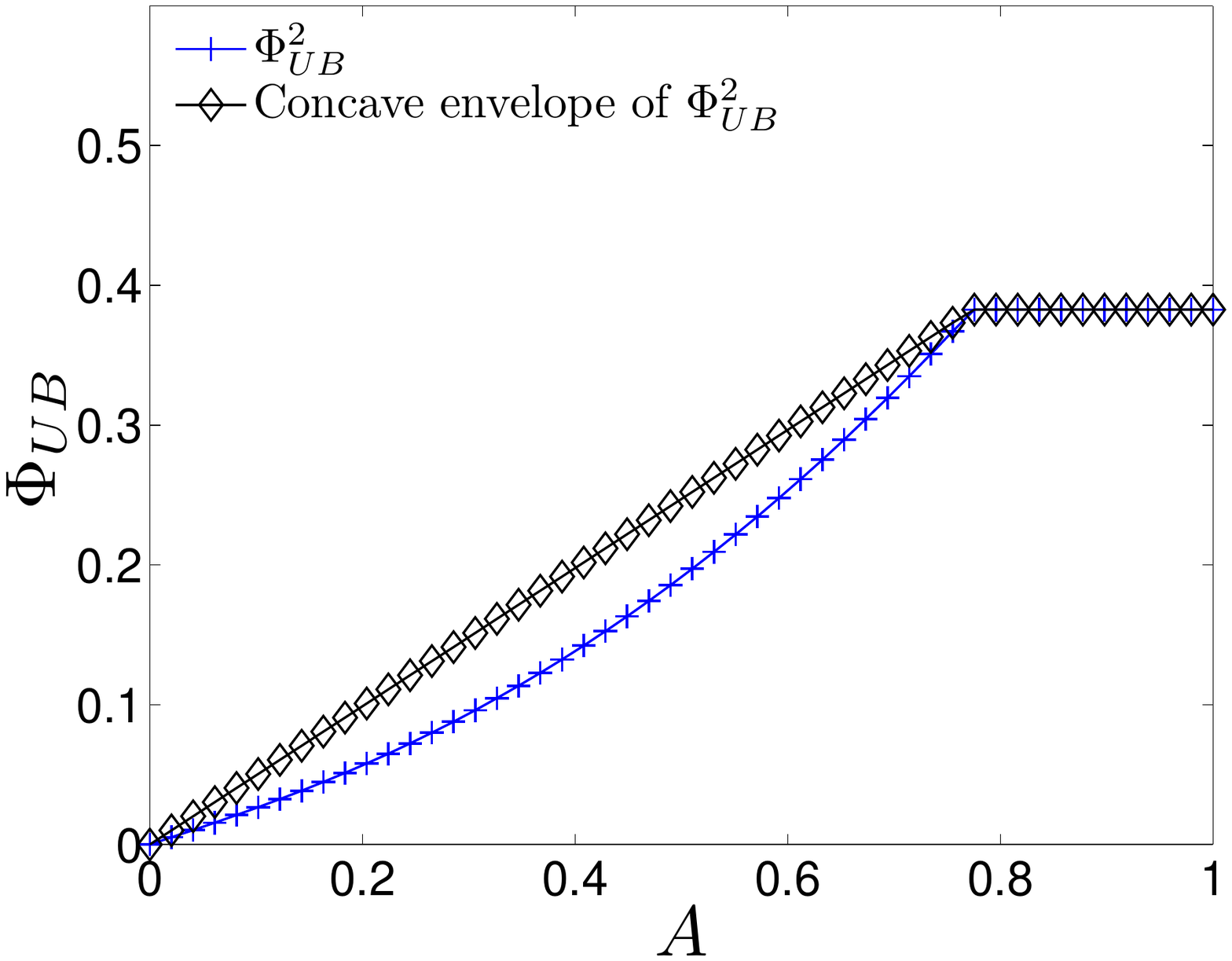}}
  \caption{Upper bound for a convex region of area $A$ within a unit square.}
  \label{fig:ConcaveEnvelope}
\end{figure}

\begin{rem} 
\label{rem:1median}
Few interesting observations on the Fermat-Weber center can be made at this point. Using Lemma 3.1 of \cite{aronov2009minimum} and the lower bound on $\FW(C) \geq 0.16dA$, it can be shown that choosing any point within a convex region is a constant factor-4.17 approximation for the 1-median problem. The center of the smallest enclosing circle of $C$ is a factor 2.41 approximation of Fermat-Weber center, which was proved in \cite{AbuAffash}. Whereas, using bounds $\Phi_{UB}$ and $\Phi_{LB_2}$ we can show that the center of the minimum bounding box of a convex region is a factor-2 approximation algorithm for the 1-median problem. 
\end{rem}

\section{A Constructive Algorithm}
\label{sec:ConstructiveAlg}

As discussed in \cref{sec:GeneralFramework}, the performance of our approach is sensitive to how we divide the bounding box of the convex polygon into $k$ rectangular sub-regions of equal area. More specifically, the aspect ratios of the sub-rectangles will determine the quality of the upper bound.  Here, we modify a well-known constructive algorithm in data visualization field called Squarified treemap algorithm \cite{Squarified} and use it as a partitioning subroutine.
Our algorithm takes a convex polygon $C$ and an integer $k$ and gives a partition of $C$ into $k$ sub-regions and then places a median point at the center of each sub-region. 

\subsection{Description of the Constructive Algorithm}
\label{subsec:SquarifiedAlg}
The algorithm first bounds the polygon with its minimum bounding box $\square C$. Let $R=\square C$ and let $w$ and $h$ denote its dimensions and assume w.l.o.g. that $w\geq h$. The algorithm then rotates $C$ in a way that the bounding box is aligned with the $x$-axis. Note that $w=\diam(C)$. The algorithm then subdivides the bounding box into $k$ sub-rectangles with equal areas. In each iteration, the algorithm makes a vertical (horizontal) division of the remaining (unfilled) part of $\square C$ and fills the left (bottom) part of the division with a subset of the remaining (unplaced) sub-regions in a way that the aspect ratio of the sub-regions that will be placed is minimized. In each iteration, if the width of the remaining part of $\square C$ is greater than its height the division will be vertical and otherwise it will be horizontal.  
Since we assumed $w \geq h$ the first division will be vertical. It creates a strip with a single rectangle at the left side of $R$ which has area equal to $\Area(\square C)/k$ and calculates the aspect ratio of this rectangle. Then it adds another rectangle with same area to this strip on top of the first rectangle. Clearly, the width of the strip will increase. Again it calculates the aspect ratio. Note that the aspect ratio of all rectangles in one strip is the same. Then, it continues adding the rectangles as long as the aspect ratio of our rectangles in the strip keeps decreasing. Once by adding another rectangle to the current strip the aspect ratio increases, the algorithm fixes the current strip and a new strip is created. Let $S$ denote the union of the rectangles in this first strip. After fixing this strip, it will let $R=R\backslash S$ and will continue to the next iteration. In any iteration if $\height(R) > \width(R)$ the division will be horizontal and the rectangles will be added next to each other in a horizontal strip. The algorithm continues until $\square C$ is partitioned into $k$ equal area rectangles. Figure \ref{fig:sqAlg} shows an illustration of applying this algorithm for partitioning a bounding box into 7 equal area rectangles. Algorithm \ref{alg:SquarifiedPartition} shows the detailed steps of this partitioning scheme. 

\begin{figure}[!b]
\centering
  \includegraphics[scale=0.44]{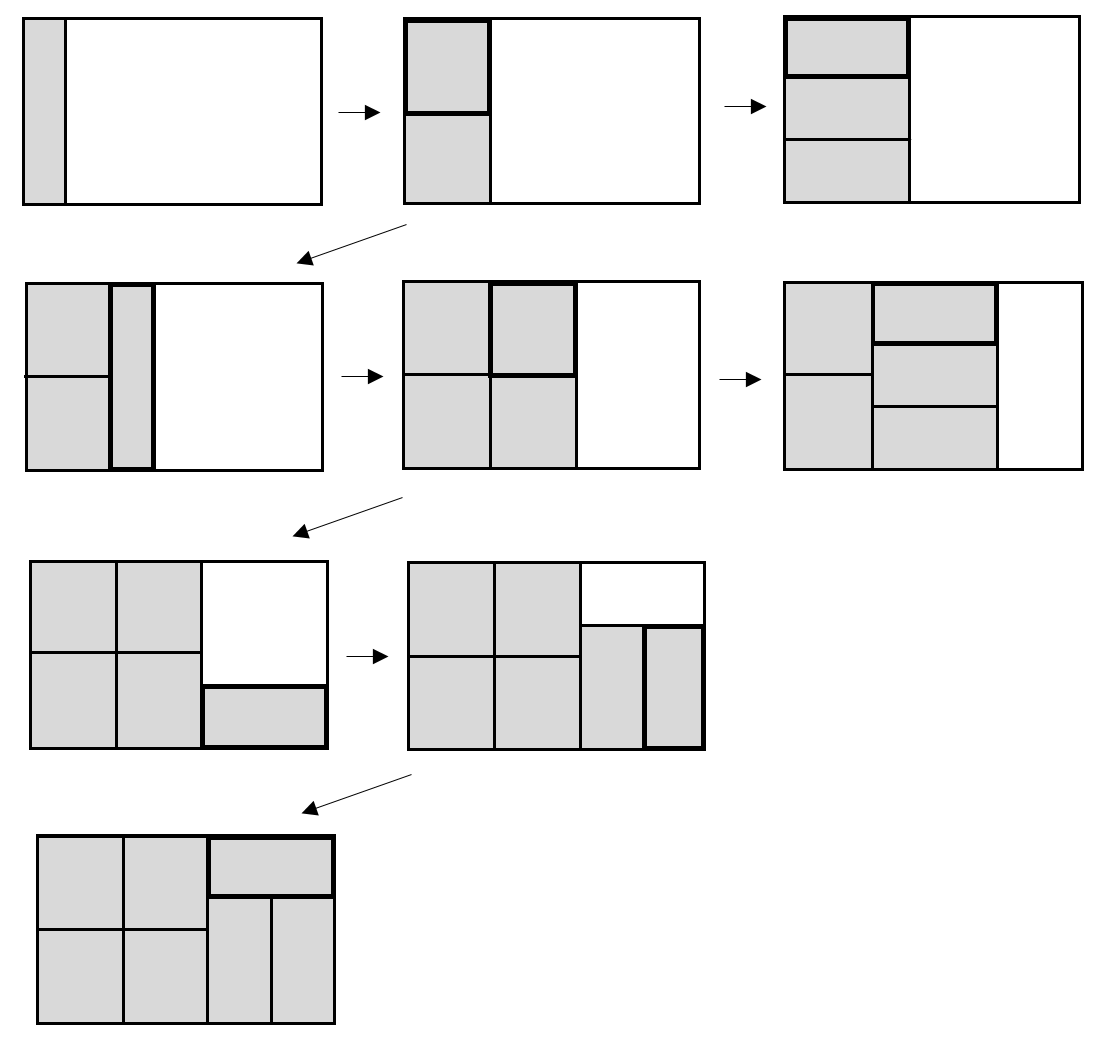}
    \protect\caption{\label{fig:sqAlg} \footnotesize Dividing a box with aspect ratio 1.6 into 7 sub-rectangles using Squarified Algorithm.}
  \end{figure}
  
After completing the partitioning phase, Algorithm \ref{alg:ConstructFW} finds a location for median points. For the rectangles that are completely inside $C$, it places a median point at the center of each such rectangle and for those which are not located completely inside $C$, but have an intersection with $C$, it places a median point at the center of the bounding box of each intersection area. For any rectangle that has no intersection with $C$, it randomly places one center inside $C$. This random placement has minimal impact on the performance of the algorithm which will be discusses in \cref{sec:modification}. 
\begin{algorithm}[t]
\SetAlgoLined
\BlankLine

\KwIn{An axis-aligned rectangle $R$ with dimensions $w$ and $h$ and an integer $k$.}
\KwOut{A partition of $R$ into $k$ rectangles, each having area $\Area(R) /k$.}
\BlankLine
\tcc{**************************************************************************************}
\eIf{k=1}{
   \KwRet{$R$}\;
   }{
   Let $w$ and $h$ denote the width and height of $R$, respectively\;
   \eIf{$w \geq h$}{
   Fill the left part of rectangle with $i\leq k$ sub-rectangles on top of each other each having area $\Area(R) /k$  in a way that the aspect ratio of these sub-rectangles is smaller than the aspect ratio of sub-rectangles when the left part is filled by $i-1$ sub-rectangles\;
   Let $R_j,\; j=1,...,i$ denote the placed sub-rectangles\;
   $S=\cup_{j=1}^{i} R_j$\;
   $R'=R\backslash S$\;
   $w'=w-\frac{i}{kh}\Area(R)$\;
   $h'=h$\;
   }{Fill the bottom part of rectangle with $i\leq k$ sub-rectangles next to each other in a way that the aspect ratio of these sub-rectangles is smaller than the aspect ratio of sub-rectangles when the bottom part is filled by $i-1$ sub-rectangles\;
   Let $R_j,\; j=1,...,i$ denote the placed sub-rectangles\;
   $S=\cup_{j=1}^{i} R_j$\;
   $R'=R\backslash S$\;
   $w'=w$\;
   $h'=h-\frac{i}{kw}\Area(R)$\;
     }
   $k'=k-i$\;
   \KwRet{${\sf SquarifiedPartition}\:(R',w',h',k')$}\;
  } 
\protect\caption{\label{alg:SquarifiedPartition}Algorithm ${\sf SquarifiedPartition}\:(R,w,h,k)$ takes as input an axis-aligned rectangle $R$ and its dimensions $w$ and $h$ and a positive integer $k$ and returns a partition of $R$ into $k$ sub-rectangles with equal areas.}
\end{algorithm}

\begin{algorithm}[t]
\SetAlgoLined
\BlankLine

\KwIn{A convex polygon $C$ and an integer $k$.}
\KwOut{The locations of $k$ median points $p_i,\; i=1,...,k$ in $C$ that approximately minimize $\FW(C, k)$ within a factor of 2.002.}
\BlankLine
\tcc{**************************************************************************************}
Let $R=\square C$ denote a minimal bounding box of $C$. Rotate $C$ so that $R$ is aligned with the coordinate axes\;
Let $w=\width(R)$\;
Let $h=\height(R)$\;
Let $R_1, ... , R_k = {\sf SquarifiedPartition}\:(R,w,h,k)$\;
\For {$i \in \{1,...,k\}$}{
Let $c_i$ denote the center of $R_i$\;
\eIf{$c_i \in C$}{
Set $p_i=c_i$\;
}{
\eIf{$R_i \cap C$ is nonempty}{
Let $R_i^{'}$ be the minimum axis-aligned bounding box of $R_i \cap C$ and let $c_i^{'}$ denote its center\; 
Set $p_i = c_i^{'}$\;
}{
Place $p_i$ anywhere in $C$\;
}
}
}
\KwRet{$p_1,...,p_k$}\;
\protect\caption{\label{alg:ConstructFW}Algorithm ${\sf ConstructFW}\:(C, k)$ takes as input a convex polygon $C$ and an integer $k$ and places $k$ median points in $C$.}
\end{algorithm}

\subsection{Analysis of Algorithm \ref{alg:ConstructFW}}
\label{subsec:AnalysisSqAlg}
\subsubsection{Running Time of Algorithm \ref{alg:ConstructFW}}
\label{subsubsec:RunTimeSqAlg}
This algorithm can be performed with running time $\mathcal{O}(n + k + k \log n)$. This is because Algorithm \ref{alg:SquarifiedPartition} takes $\mathcal{O}(k)$ operations to form the placement of the sub-rectangles and Algorithm \ref{alg:ConstructFW} requires $\mathcal{O}(n)$ operations to find a minimum bounding box of $C$. The last step of Algorithm \ref{alg:ConstructFW} consists of moving the center points to $C$ when necessary, which takes $\mathcal{O}(k \log n)$ operations using a point-in-polygon algorithm \cite{PreparataShamos1985}.

\subsubsection{Aspect Ratios in Algorithm \ref{alg:SquarifiedPartition}}
\label{subsubsec:AspectRatiosSqAlg}
To investigate the aspect ratios of the rectangles produce by Algorithm \ref{alg:SquarifiedPartition}, first, we make two simplifying assumptions:
\begin{description}
\item [Assumption 1.] We assume the bounding box $R=\square C$, once rotated and aligned with the axes, is such that its width is larger than its height, i.e., $w\geq h$. 
\item [Assumption 2.] Without loss of generality we assume that the height of the bounding box is equal to 1. Hence, $w \geq 1$.
\end{description}
\begin{lem}
\label{lem:SquarifiedSliceCase}
If $w/k \geq 0.5$, Algorithm \ref{alg:SquarifiedPartition} divides $R=\square C$ into $k$ equal area rectangles with dimensions $\frac{w}{k}\times 1$.
\end{lem}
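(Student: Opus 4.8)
The plan is to show that under the hypothesis $w/k \geq 0.5$, Algorithm~\ref{alg:SquarifiedPartition} never stacks more than one rectangle in any strip, so that every strip is a single full-height vertical slice of width $w/k$, and that the recursion preserves this situation until all $k$ slices are made. By Assumptions~1 and~2 we may take $R$ to have dimensions $w \times 1$ with $w \geq 1$; write $t = w/k$ for the common area (and target slice-width) of the $k$ rectangles, so the hypothesis reads $t \geq 0.5$.

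First I would record the aspect ratio of a strip holding $i$ stacked rectangles. In a box of height $1$, if $i$ rectangles of area $t$ are stacked to fill the height, each has height $1/i$ and width $it$, so its aspect ratio is
\[
\AR(i) = \max\left\{i^2 t, \frac{1}{i^2 t}\right\}.
\]
The algorithm adds rectangles to the current strip only while the aspect ratio strictly decreases and fixes the strip as soon as it fails to decrease; hence it keeps the first rectangle alone (chooses $i=1$) precisely when $\AR(2) \geq \AR(1)$, and higher values of $i$ need never be examined. A short case check on $\AR(1) = \max\{t, 1/t\}$ versus $\AR(2) = \max\{4t, 1/(4t)\}$ shows $\AR(2) \geq \AR(1)$ for every $t \geq 0.5$: for $t \geq 1$ we have $\AR(2) = 4t > t = \AR(1)$, while for $0.5 \leq t < 1$ we have $\AR(1) = 1/t$ and $\AR(2) = 4t$, and $4t \geq 1/t \iff t^2 \geq 1/4 \iff t \geq 1/2$. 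Thus the first strip is the single slice of dimensions $\frac{w}{k} \times 1$.

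Next I would run the induction on the recursive call. After fixing this first slice the algorithm recurses on $R' = R \setminus S$, a box of width $w' = w - t = (k-1)t$ and height $h' = 1$, with $k' = k-1$ rectangles still to place. The crucial observation is that the per-rectangle target width is unchanged, $w'/k' = (k-1)t/(k-1) = t$, so the hypothesis $t \geq 0.5$ is preserved verbatim. To be sure the recursive cut is again vertical I must check $w' \geq h' = 1$: for $k' \geq 2$ we have $w' = k' t \geq 2 \cdot 0.5 = 1$, so the algorithm again enters the $w \geq h$ branch, and the identical aspect ratio computation (which depends only on $t$, not on the current box width) forces $i = 1$ once more. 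Iterating, the $j$-th recursive box has width $k_j t$ with $k_j = k-j+1$ rectangles, and the cut is vertical and single-rectangle for every $k_j \geq 2$; when $k_j = 1$ the base case returns the final box, of width $w - (k-1)t = t$ and height $1$, i.e.\ dimensions $\frac{w}{k} \times 1$. This yields exactly $k$ congruent slices of dimensions $\frac{w}{k} \times 1$, as claimed.

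I expect the only delicate point to be the boundary value $t = 0.5$, where $\AR(1) = \AR(2) = 2$ so adding a second rectangle neither increases nor decreases the aspect ratio. Here the argument hinges on the algorithm's stopping convention --- it continues \emph{only while the aspect ratio strictly decreases} --- so equality is treated as ``do not continue,'' and the single-slice behavior persists at the endpoint. Everything else is a routine verification of the two inequalities above together with the bookkeeping of the recursion, so the main obstacle is merely to state the stopping rule precisely enough that the $t = 1/2$ case is unambiguous.
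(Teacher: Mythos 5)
Your proof is correct and follows essentially the same route as the paper's: both compute $\AR_1=\max\{w/k,\,k/w\}$ versus $\AR_2=\max\{4w/k,\,k/(4w)\}$ to show the first strip is a single full-height slice, and both then verify that the undivided remainder keeps width at least its height (equivalently, that the per-rectangle target $w'/k'=w/k$ is preserved) until only one rectangle is left. Your explicit handling of the boundary case $w/k=0.5$, where $\AR_1=\AR_2=2$ and the conclusion rests on the algorithm's "continue only while strictly decreasing" convention, is a point the paper glosses over, but the argument is otherwise identical.
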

\begin{proof}
At the beginning, since $w \geq h$, the algorithm starts with vertical subdivision. If we put one single rectangle on the left, the aspect ratio will be $\AR_1=\max\{\frac{w}{k},\frac{k}{w}\}$. Then, if we add another rectangle to this single rectangle strip, the aspect ratio of each of the two rectangles becomes $\AR_2=\max\{\frac{4w}{k},\frac{k}{4w}\}$. We can easily see that for $\frac{w}{k} \geq 1$ we have $\AR_1=\frac{w}{k}$ and $\AR_2=\frac{4w}{k}$ and hence $\AR_1$ is smaller and the strip with one rectangle will be fixed. For $0.5 \leq \frac{w}{k} \leq 1$, $\AR_1=\frac{k}{w} \leq 2$ and $\AR_2=\frac{4w}{k} \geq 2$ and again $\AR_1$ has smaller
value and the strip with one rectangle will be fixed.

While the width of the undivided part of the main rectangle is greater than its height, the algorithm will continue likewise for new strips. Therefore, it suffices to consider the case when the height of the undivided rectangle becomes greater than its width. Assume that $i$ rectangles have been already located in the main rectangle next to each other. The width and the height of the undivided rectangle will be $w-i\frac{w}{k}$ and 1, respectively. Thus we have
\[
w-i\frac{w}{k} < 1 \Longrightarrow i > k-\frac{k}{w} > k-2 \geq k-1
\]
So, the case when the height becomes greater than the width of the undivided rectangle only happens when one rectangle is remained and it means the subdivision is complete. 
\end{proof}

\begin{lem}
\label{lem:Squarified1stStripAR}
If we have a rectangle with width $w$ and height $h$ and we divide it into $k$ rectangles using Algorithm \ref{alg:SquarifiedPartition} and $w/h \leq k$, the aspect ratio of all rectangles in the first created strip is less than or equal to 2. 
\end{lem}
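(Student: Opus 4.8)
The plan is to reduce the whole question to a single scalar. Since $w\ge h$ (Assumption 1) the first division is vertical, so the first strip is a vertical stack of some number $i$ of equal-area rectangles spanning the full height $h$. If the strip contains $i$ rectangles, each of area $wh/k$, then the strip has height $h$ and width $iw/k$, so every rectangle in it has width $iw/k$ and height $h/i$. Hence all rectangles in the strip share the common aspect ratio
\[
\AR_i=\max\left\{\frac{i^2w}{kh},\ \frac{kh}{i^2w}\right\}=\max\left\{i^2 r,\ \frac{1}{i^2 r}\right\},\qquad r:=\frac{w}{kh}.
\]
This is exactly where the hypothesis enters: $w/h\le k$ is the statement $r\le 1$, which guarantees that the continuous minimizer of $\AR_i$, namely $i_0=r^{-1/2}$, satisfies $i_0\ge 1$.

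First I would record that $\AR_i$ is unimodal in $i$ (strictly decreasing while $i<i_0$, since there $\AR_i=1/(i^2r)$, and strictly increasing while $i>i_0$, since there $\AR_i=i^2 r$), so the rule ``keep adding rectangles while the aspect ratio decreases'' stops precisely at the integer minimizer $i^\ast\in\{\lfloor i_0\rfloor,\lceil i_0\rceil\}$. I would then bound $\AR_{i^\ast}$ by splitting on which side of $i_0$ the minimizer lands. If $i^{\ast 2}r\le 1$ (so $i^\ast\le i_0$ and $\AR_{i^\ast}=1/(i^{\ast 2}r)$), then $i^\ast+1>i_0$ forces $(i^\ast+1)^2 r\ge 1$, and the minimality inequality $\AR_{i^\ast}\le\AR_{i^\ast+1}=(i^\ast+1)^2 r$ rearranges to $r\ge 1/\bigl(i^\ast(i^\ast+1)\bigr)$; substituting gives
\[
\AR_{i^\ast}=\frac{1}{i^{\ast 2}r}\le\frac{i^\ast+1}{i^\ast}\le 2,
\]
the final step using $i^\ast\ge 1$. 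Symmetrically, if $i^{\ast 2}r>1$ then $r\le 1$ forces $i^\ast\ge 2$, the neighbor $i^\ast-1$ lies below $i_0$, and the minimality inequality $\AR_{i^\ast}\le\AR_{i^\ast-1}$ yields $r\le 1/\bigl(i^\ast(i^\ast-1)\bigr)$ and hence $\AR_{i^\ast}=i^{\ast 2}r\le i^\ast/(i^\ast-1)\le 2$.

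Since all rectangles in the first strip share the aspect ratio $\AR_{i^\ast}$, this establishes the claim. The degenerate boundaries are all benign and should just be checked in passing: an integer $i_0$, the endpoint $r=1$, and the case $i^\ast=1$ (which has no left neighbor, but then $i^{\ast 2}r=r\le 1$ automatically places us in the first branch, where only the right neighbor is used). I expect the main, though modest, obstacle to be making the two-sided comparison airtight: one must verify that the neighbor invoked in each minimality inequality genuinely lies on the opposite side of $i_0$, so that its aspect ratio is given by the expected branch of the $\max$ --- this is precisely what legitimizes the substitutions $r\ge 1/\bigl(i^\ast(i^\ast+1)\bigr)$ and $r\le 1/\bigl(i^\ast(i^\ast-1)\bigr)$. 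Finally I would note that the bound is tight: at $r=1/2$ (for instance $w/h=k/2$) both $i=1$ and $i=2$ give aspect ratio exactly $2$, so the constant cannot be improved.
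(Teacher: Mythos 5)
Your proof is correct. It is built from the same two ingredients as the paper's own argument: the closed form $\AR_i=\max\bigl\{i^2w/(kh),\,kh/(i^2w)\bigr\}$ for the common aspect ratio of a vertical strip of $i$ stacked equal-area rectangles, and a comparison of the stopping index with a neighboring index forced by the ``add while the aspect ratio decreases'' rule. The organization differs in a way that matters, though. The paper argues by contradiction: it assumes the fixed strip has aspect ratio exceeding $2$, \emph{asserts} that the maximum is then attained by the tall branch $kh/(i^2w)$, and shows that adding an $(i{+}1)$st rectangle would push the aspect ratio below $2$ --- this is exactly your first branch, phrased contrapositively. Your second branch, in which the stopping index lands on the wide side of the continuous minimizer and the comparison with $i^\ast-1$ gives $\AR_{i^\ast}=i^{\ast 2}r\le i^\ast/(i^\ast-1)\le 2$ using $i^\ast\ge 2$, is precisely the piece needed to justify the branch selection that the paper states without proof, so your version is the more airtight of the two. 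The one boundary case you do not list is that the right-neighbor comparison $\AR_{i^\ast}\le\AR_{i^\ast+1}$ presupposes $i^\ast<k$; since $w\ge h$ gives $i_0=\sqrt{kh/w}\le\sqrt{k}$, the first strip can never absorb all $k$ rectangles (except trivially when $k=1$, where the aspect ratio is $w/h\le 1$), so this is harmless but deserves a sentence alongside the other degenerate cases you flag.
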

\begin{proof}
First, note that by construction and assumption of $w\geq h$ the first strip is a vertical strip where we will have rectangles that their height is greater than or equal to their width and they all have the same aspect ratios. Let $\strip_i$ denote a strip that contains $i$ rectangles. Now, assume that Algorithm \ref{alg:SquarifiedPartition} has fixed the first strip and the aspect ratio of rectangles in this strip is greater than 2. So,
\[
{\AR}_{\,\strip_{i}}=\max\{\frac{2i/kh}{h/i},\frac{h/i}{2i/kh}\}=\frac{h/i}{2i/kh}>2
\]
Now, we add another rectangle to this strip and calculate the new aspect ratio.
\[
{\AR}_{\,\strip_{i+1}}=\max\{\frac{h/(i+1)}{2(i+1)/kh} , \frac{2(i+1)/kh}{h/(i+1)} \}
\]
If $\max\{\frac{h/(i+1)}{2(i+1)/kh} , \frac{2(i+1)/kh}{h/(i+1)}\}=\frac{h/(i+1)}{2(i+1)/kh}$, clearly the aspect ratio is decreased by adding the $(i+1)$th rectangle. This contradicts the assumption that the algorithm has fixed the strip with $i$ rectangles. So when the strip is fixed, we must have $\max\{\frac{h/(i+1)}{2(i+1)/kh} , \frac{2(i+1)/kh}{h/(i+1)}\}=\frac{2(i+1)^2}{kh^2}$. Then for $i=1,...,k-1$, we will have
\[
\frac{h/i}{2i/kh}>2 \implies \frac{kh^2}{2i^2}>2  \implies \frac{2i^2}{kh^2}<0.5 \implies \frac{2(i+1)^2}{kh^2}<2\,,
\]
which again shows adding the $(i+1)$th rectangle could improve the aspect ratio and make it less than 2 that contradicts our initial assumption. This completes the proof.
\end{proof}
\begin{lem}
\label{lem:SquarifiedUndividedAR}
If we use Algorithm \ref{alg:SquarifiedPartition} for dividing a rectangle $R$ with dimensions $w$ and height $h$ and $ w/h \leq 2$, in every step of the algorithm, as long as we have more than one sub-rectangle remaining, the undivided part of the rectangle has aspect ratio less than or equal to 2. 
\end{lem}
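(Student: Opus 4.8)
The plan is to prove the statement by induction following the recursion of Algorithm~\ref{alg:SquarifiedPartition}, establishing the slightly stronger invariant that \emph{every} rectangle handed to a recursive call that still contains at least two sub-pieces has aspect ratio at most $2$. The base case holds because the initial rectangle satisfies $w/h\le 2$. For the inductive step I would take a generic undivided rectangle of dimensions $w\times h$ with $w/h\le 2$ (the inductive hypothesis) and, w.l.o.g., $w\ge h$, so the next strip is vertical; the case $h>w$ follows by the $w\leftrightarrow h$ symmetry of the algorithm. Since the area per piece is an invariant $a$ of the whole recursion, a rectangle holding $m$ pieces satisfies $wh=ma$; a first strip of $i$ pieces then has width $s=ia/h$, and the remaining undivided rectangle has width $w'=w-s=(m-i)a/h$ and height $h$. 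Setting $m'=m-i$ and $y=w'/h$, I record the two identities $y=m'a/h^{2}$ and $y=\tfrac{m'}{m}\cdot\tfrac{w}{h}$. The entire lemma then reduces to proving $\tfrac12\le y\le 2$ whenever $m'\ge 2$, since the aspect ratio of the remaining rectangle is $\max\{y,1/y\}$.

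The upper bound is immediate: $w'<w$ gives $y<w/h\le 2$. The substance is the lower bound $y\ge\tfrac12$, which I would obtain by a case split on the strip size $i$. Write $t=h^{2}/a$, so that a strip of $j$ pieces has aspect ratio $\AR(j)=\max\{j^{2}/t,\,t/j^{2}\}$. For $i\le 2$ no property of the greedy rule is needed: from $m'\ge 2$ we have $m\ge i+2$, hence $y=\tfrac{m-i}{m}\cdot\tfrac{w}{h}\ge\tfrac{2}{i+2}\ge\tfrac12$. For $i\ge 3$ I would instead show $t\ge 2i$, which at once yields $m'=m-i\ge t-i\ge t/2$ (using $m=\tfrac{w}{h}\,t\ge t$) and therefore $y=m'/t\ge\tfrac12$. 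To get $t\ge 2i$ I split on the sign of $i^{2}-t$: if $i^{2}<t$ then trivially $t>i^{2}\ge 2i$; if $i^{2}\ge t$, then, since the $i$-th piece was actually appended to the strip, we have $\AR(i)\le\AR(i-1)$, which forces $(i-1)^{2}<t$ and hence $i^{2}/t\le t/(i-1)^{2}$, i.e. $t\ge i(i-1)\ge 2i$.

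Combining the two bounds shows the remaining rectangle has aspect ratio $\max\{y,1/y\}\le 2$, which closes the induction. The step I expect to be the main obstacle is the lower bound $y\ge\tfrac12$ for \emph{small} strips: crude size estimates such as $i\le\sqrt{t}+1$ do not deliver $t\ge 2i$ at the smallest admissible $i$, so one is forced to argue directly for $i\le 2$ and to invoke the greedy ``appended the $i$-th piece'' inequality for $i\ge 3$. It is precisely in the small-strip regime that the hypothesis of more than one remaining sub-rectangle is indispensable, since it supplies $m\ge i+2$ and excludes the near-terminal rectangles, whose aspect ratio can genuinely exceed $2$.
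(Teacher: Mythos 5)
Your proof is correct, and it takes a genuinely different route from the paper's. The paper isolates the step at which the cutting direction changes and argues by contradiction: if the leftover rectangle had aspect ratio $z>2$, the pieces of the just-fixed strip would have aspect ratio at least $1.8$ (obtained by minimizing $(\lceil rz\rceil-r)^2/(rz)$ over $r\ge 2$, $z>2$), whereas a strip with one fewer piece would have aspect ratio at most $1.25$, contradicting the greedy fixing rule. You instead run a clean induction on the recursion, reduce the whole claim to the single inequality $y=w'/h\ge \tfrac12$ for the leftover width, and establish it by splitting on the strip size: for $i\le 2$ a pure counting bound $m\ge i+2$ suffices, and for $i\ge 3$ the greedy inequality $\AR(i)\le\AR(i-1)$ yields the algebraic bound $t\ge i(i-1)\ge 2i$, hence $m-i\ge t/2$. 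What your approach buys is that it covers every step uniformly (not just the direction-change step), it makes explicit where the hypothesis of at least two remaining pieces is used, and it replaces the paper's two-parameter numerical minimization and ceiling manipulations --- whose justification is left implicit (indeed the paper's inference $i\ge zr-r\Rightarrow i\ge r(\lceil z\rceil-1)$ is not valid for non-integer $z$) --- with a short closed-form argument. What the paper's version buys is a more vivid picture of the mechanism: it directly exhibits the ``remove one rectangle from the strip'' swap that the greedy rule forbids, which is the intuitive reason a skinny leftover cannot occur. Both proofs ultimately rest on the same ingredient, the optimality of the fixed strip under the greedy aspect-ratio rule.
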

\begin{proof}
Let $w'$ and $h'$ to be, respectively, the length of the longer and shorter sides of the undivided part of $R$ that is remained after the first strip is fixed by the algorithm. Using this definition, $w'/h'$ represents the aspect ratio of the undivided part in every step of the algorithm. If the direction does not change, we still have $w'/h' = w'/h < w/h \leq 2$. The algorithm will continue, while maintaining this condition, until after fixing a strip the direction changes. Therefore, it suffices to consider the case where the direction changes after fixing the first strip (see Figure \ref{fig:fig1}). Without loss of generality, assume $w\geq h$, 
let the number of sub-rectangles in the fixed strip be $i$ and the remaining sub-rectangles for the undivided part be $r=k-i$. \\
Now, let $z=w'/h' \in\mathbb{R}$ and assume $z > 2$.  Hence, $w > w'=zh'$. Since the area of each sub-rectangle is $\frac{1}{i+r}$, we have
\[
\frac{h(w-h')}{i}=\frac{hh'}{r} \implies \frac{w-h'}{i}=\frac{h'}{r} \implies i=\frac{(w-h')r}{h'} \implies i \geq zr-r \implies i \geq r(\ceil{z}-1)
\]
Let $\rectang_{*}$ denote one of the rectangles in the fixed strip. By increasing $i$, the area of the undivided part decreases and since the height of the rectangle does not change, the aspect ratio of $\rectang_{*}$ increases. Since our algorithm seeks to minimize the aspect ratio in each strip, $i$ should be minimized. Thus, $i=\lceil rz \rceil-r$. We have 
\[
\AR(\rectang_{*})=\frac{w-h'}{h/i}=\frac{i^2 h'/r}{h} = \frac{i^2}{rh/h'}= \frac{i^2}{rw'/h'}= \frac{(\lceil rz\rceil-r)^2}{rz}
\]
For $r \geq 2$ and $z>2$, this term is minimized when $r=2, z=2.5$, which gives $\AR(\rectang_{*}) \geq 1.8$.
Suppose that we reduce one rectangle from the strip and add it to the undivided part of our rectangle. Let $\rectang_{**}$ denote one of the rectangles in the modified strip. We have
\[
\AR(\rectang_{**})=\max\{\frac{w-h'}{h/(i-1)},\frac{h/(i-1)}{w-h'}\}=\max\{\frac{(\lceil rz\rceil-r-1)^2}{rz}, \frac{rz}{(\lceil rz\rceil-r-1)^2}\}
\]
Clearly, if the maximum is the first statement, it is less than $\AR(\rectang_{*})$ and if the maximum is the second statement,  
it is maximized when $r=2, z=2.5$ and results in $\AR(\rectang_{**}) \leq 1.25$.
This is a contradiction since by construction the aspect ratio of a fixed strip cannot be reduced by removing a rectangle. Hence, the assumption that $z>2$ is rejected.
\begin{figure}[t]
\centering
  \includegraphics[scale=0.45]{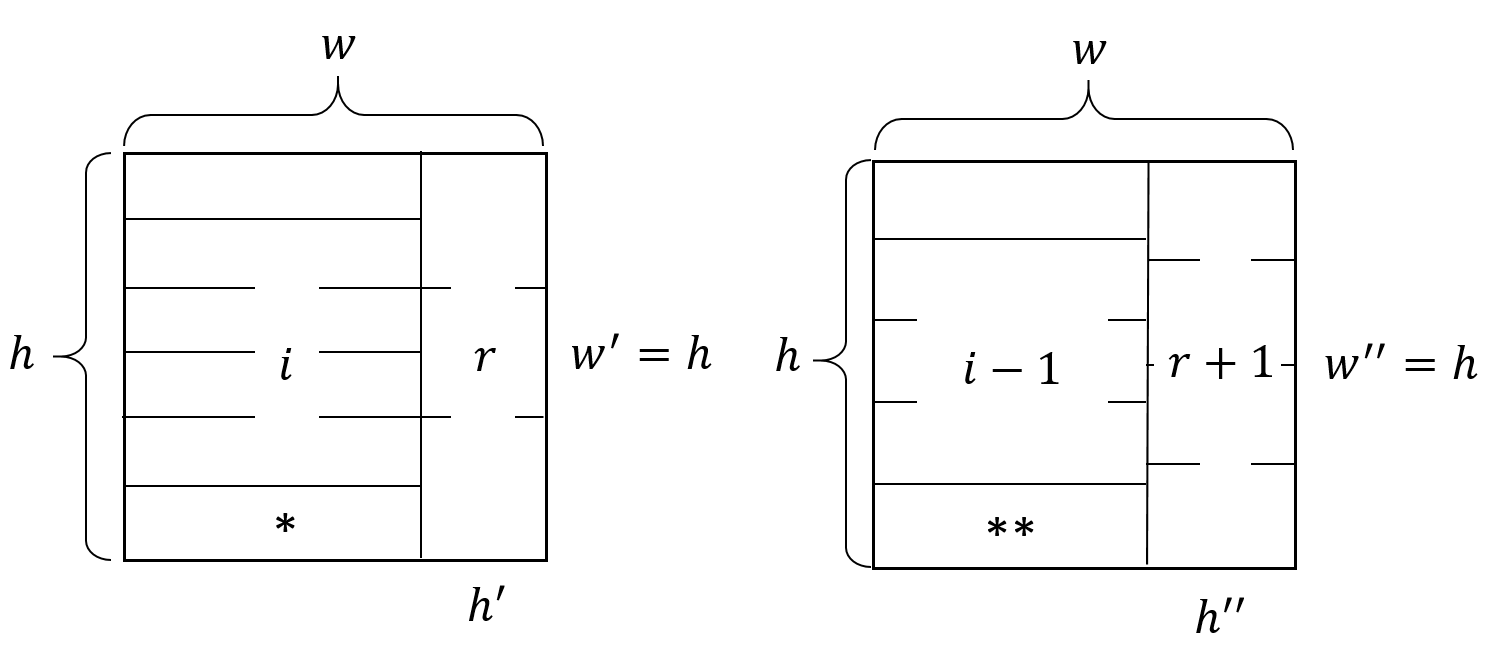}
    \caption{\label{fig:fig1} A rectangle having a fixed strip mentioned in the proof of Lemma \ref{lem:SquarifiedUndividedAR}. The left figure shows a rectangle with width $w$ and height $h$ when a strip with $i$ sub-rectangles is fixed and the direction has switched. The right figure shows this rectangle when one sub-rectangle is reduced from that strip and added to the undivided part on the right.}
\end{figure}
\end{proof}  

\begin{cor}
\label{cor:SquarifiedARbeforeDirectionChange}
If we have a rectangle with width $w$ and height $h$ and we divide it into $k$ rectangles using Algorithm \ref{alg:SquarifiedPartition} and $w/h \leq k$, the aspect ratio of all rectangles in all created strips before a change of direction is at most 2.
\end{cor}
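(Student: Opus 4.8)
The plan is to reduce the corollary to a repeated application of Lemma \ref{lem:Squarified1stStripAR}. Observe that, as long as no change of direction has occurred, every strip produced by Algorithm \ref{alg:SquarifiedPartition} is vertical and is precisely the \emph{first} strip of the recursive call that created it (acting on the current undivided rectangle $R$ with its remaining count $k$). Hence it suffices to verify that each such recursive call satisfies the hypothesis of Lemma \ref{lem:Squarified1stStripAR}, namely $w/h \leq k$, so that the lemma delivers an aspect ratio of at most $2$ for that strip.

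First I would set up the invariant. Writing $(w_t,h_t,k_t)$ for the dimensions and remaining count at the $t$-th recursive call before any direction change, the base case $(w_0,h_0,k_0)=(w,h,k)$ satisfies $w_0\geq h_0$ (Assumption 1) and $w_0/h_0\leq k_0$ (the hypothesis of the corollary). I claim the relation $w_t/h_t\leq k_t$ propagates. Since the call is vertical, the $i_t$ rectangles of the fixed strip are stacked to full height $h_t$, so the undivided part keeps its height, $h_{t+1}=h_t$, while its width shrinks in proportion to the surviving area: $w_{t+1}=w_t\,(k_t-i_t)/k_t=w_t\,k_{t+1}/k_t$, where $k_{t+1}=k_t-i_t$.

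The key (and essentially only) computation is then
\[
\frac{w_{t+1}}{h_{t+1}}=\frac{w_t}{h_t}\cdot\frac{k_{t+1}}{k_t}\leq k_t\cdot\frac{k_{t+1}}{k_t}=k_{t+1},
\]
so the invariant $w_t/h_t\leq k_t$ is self-reproducing at every step prior to a direction change. Because we restrict to steps before the first direction change, we also have $w_t\geq h_t$ throughout (this is exactly the condition under which Algorithm \ref{alg:SquarifiedPartition} keeps slicing vertically). With both $w_t\geq h_t$ and $w_t/h_t\leq k_t$ in hand, Lemma \ref{lem:Squarified1stStripAR} applies to the $t$-th call and bounds the aspect ratio of its first strip by $2$.

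Finally I would assemble the pieces: every strip created before a change of direction is the first strip of one of these vertical recursive calls, and each has aspect ratio at most $2$, which is precisely the claim. I expect the only real content, and hence the only place requiring care, to be the bookkeeping $w_{t+1}=w_t\,k_{t+1}/k_t$ and $h_{t+1}=h_t$ establishing that the ratio-to-count condition survives each slice; once that is in place the corollary is immediate from Lemma \ref{lem:Squarified1stStripAR}.
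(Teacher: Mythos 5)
Your proof is correct, but it takes a genuinely different route from the paper's. The paper disposes of the corollary in one sentence: it iteratively applies Lemma \ref{lem:Squarified1stStripAR}, citing Lemma \ref{lem:SquarifiedUndividedAR} to guarantee that the undivided part satisfies that lemma's hypothesis (the undivided remainder has aspect ratio at most $2$, hence at most the number of remaining sub-rectangles). You instead bypass Lemma \ref{lem:SquarifiedUndividedAR} entirely and propagate the hypothesis $w/h\leq k$ directly through the recursion via the bookkeeping $h_{t+1}=h_t$, $w_{t+1}=w_t\,k_{t+1}/k_t$, which follows from the equal-area condition and the fact that a vertical strip spans the full height. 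Your route is more self-contained and arguably more robust: Lemma \ref{lem:SquarifiedUndividedAR} is stated only under the hypothesis $w/h\leq 2$, whereas the corollary assumes only $w/h\leq k$, so for an initial rectangle with $2<w/h\leq k$ the paper's one-line justification does not literally apply on the first few slices, while your invariant $w_t/h_t\leq k_t$ covers that regime without extra argument. The paper's approach buys brevity by leaning on a lemma already proved for the later analysis; yours buys a cleaner logical dependency at the cost of a short explicit induction. One small point worth making explicit in a final write-up: when $k_t=1$ the "strip" is the single remaining rectangle, and your invariant gives $w_t/h_t\leq 1$ together with $w_t\geq h_t$, so its aspect ratio is $1$ — consistent with the claim, but it is the one case where Lemma \ref{lem:Squarified1stStripAR} is not really being invoked.
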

\begin{proof}
This is by iteratively applying the result of Lemma \ref{lem:Squarified1stStripAR} in the undivided part which by Lemma \ref{lem:SquarifiedUndividedAR} satisfies the assumptions of Lemma \ref{lem:Squarified1stStripAR}.
\end{proof}

\begin{lem}
\label{lem:SquarifiedLastAR} 
If $\AR(R) \leq 2$, the aspect ratio of the last sub-rectangle created by Algorithm \ref{alg:SquarifiedPartition} is less than or equal to 3. 
\end{lem}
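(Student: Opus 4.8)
The plan is to isolate the single leftover region that the recursion of Algorithm \ref{alg:SquarifiedPartition} returns at its base case ($k=1$) and bound its aspect ratio directly. Let $U$ be the undivided part at the recursion level that produces this last piece: $U$ contains some number $m \ge 2$ of as-yet-unplaced sub-rectangles, the algorithm fixes one strip of $i = m-1$ of them, and the remaining single sub-rectangle is the ``last'' one. Because $m \ge 2$, more than one sub-rectangle is present in $U$, so Lemma \ref{lem:SquarifiedUndividedAR} applies (its hypothesis $w/h\le 2$ is exactly $\AR(R)\le 2$) and gives $\AR(U) \le 2$. Using the horizontal/vertical symmetry of the algorithm, I would assume without loss of generality that $U$ has width $W$ and height $H$ with $q := W/H \in [1,2]$, so that the algorithm subdivides $U$ with a vertical strip.

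First I would record the aspect ratio of a vertical strip of $j$ equal-area rectangles inside $U$: each such rectangle has width $jW/m$ and height $H/j$, so its aspect ratio is $\AR_j = \max\{\tfrac{j^2 q}{m}, \tfrac{m}{j^2 q}\}$. As a function of $j$ this is strictly decreasing while $j < \sqrt{m/q}$ and strictly increasing thereafter, hence unimodal; the greedy stopping rule of Algorithm \ref{alg:SquarifiedPartition} (keep adding rectangles while the aspect ratio decreases) therefore fixes the strip at $i^\ast = \argmin_j \AR_j$, which satisfies $i^\ast \le \lceil \sqrt{m/q}\,\rceil \le \lceil \sqrt{m}\,\rceil$ since $q \ge 1$.

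The key step is to show that only $m \in \{2,3\}$ can yield the last single piece. Indeed, for $m \ge 4$ one checks elementarily that $\lceil\sqrt{m}\,\rceil \le m-2$, so $i^\ast \le m-2$ and the fixed strip leaves at least two sub-rectangles in the undivided part, contradicting that this level produces the single last piece. It remains to treat the two surviving cases. For $m=2$ the strip takes $i^\ast = 1$ (one verifies $\AR_2 = 2q \ge \AR_1 = 2/q$ on $[1,2]$), and the leftover rectangle has dimensions $\tfrac{W}{2}\times H$, so its aspect ratio is $2/q \le 2$. For $m=3$ the strip leaves exactly one sub-rectangle only when $i^\ast = 2$, which a short comparison of $\AR_1 = 3/q$, $\AR_2 = 4q/3$, $\AR_3 = 3q$ shows happens precisely for $q \in [1, 3/2)$; in that regime the leftover rectangle has dimensions $\tfrac{W}{3}\times H$ and aspect ratio $3/q \le 3$, with equality at $q = 1$ (and for $q\in[3/2,2]$ the strip instead leaves two pieces, so this is not the terminal level).

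Putting these together, the last sub-rectangle has aspect ratio at most $3$, and the bound is tight, being attained already when $U$ is a unit square partitioned into three pieces. The main obstacle I anticipate is exactly the case analysis in the key step: rigorously ruling out $m \ge 4$ via the unimodality of $\AR_j$ together with the estimate $i^\ast \le \lceil\sqrt{m}\,\rceil \le m-2$, and being careful that for $m=3$ the terminal regime is $q \in [1,3/2)$ rather than the full interval $[1,2]$. Everything else reduces to the elementary evaluations of $\AR_j$ recorded above.
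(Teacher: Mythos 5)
Your proof is correct and follows essentially the same route as the paper's: both rest on Lemma \ref{lem:SquarifiedUndividedAR} to bound the aspect ratio of the final undivided region by $2$, both use the greedy optimality of the strip size to force that region to contain at most three sub-rectangles, and both then read off the bound $3/q \le 3$ for the leftover piece. Your exclusion of $m \ge 4$ via the explicit minimizer $i^\ast \approx \sqrt{m/q}$ and the estimate $\lceil\sqrt{m}\rceil \le m-2$ is a cleaner packaging of the paper's pairwise comparison of $i$ versus $i-1$ strips, but the underlying argument is the same.
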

\begin{proof}
We know from \cref{lem:SquarifiedUndividedAR} that for dividing a rectangle with aspect ratio less than or equal to 2 using \cref{alg:SquarifiedPartition}, while the number of remaining sub-rectangles is more than 1, the undivided part of the rectangle always has aspect ratio less than or equal to 2. The last sub-rectangle will be generated either immediately after a change in direction or width of the last undivided rectangle which has aspect ratio less than or equal to two, should be divided into two equal parts. For the latter case, the aspect ratio is maximized when the width and the height of the undivided part are equal in which case the aspect ratio of the last two sub-rectangles will be equal to 2, which is less than 3.\\
For the former case, let $\rectang_{*}$ be the last sub-rectangle and suppose we have $i$ sub-rectangles in the strip immediately before the last sub-rectangle (see Figure \ref{fig:fig2}). Let $w$ and $h$ be the width and height of the union of this strip and the last rectangle and assume, w.l.o.g., that $w \geq h$ and that the area of this rectangle is equal to $wh=1$. Therefore, the area of each sub-rectangle is $1/(i+1)$. We obtain 
\[
\AR(\rectang_{*})=\frac{h}{h'}=\frac{h^2}{1/(i+1)}=\frac{i+1}{w^2}
\]
We want to see when the aspect ratio of $\rectang_{*}$ is maximized. Since in this case the last sub-rectangle is happening after a change in direction, we must have $i\geq 2$. Note that when we have $i$ sub-rectangles in the strip, $\AR(\rectang_{*})$ is better than the case if we had $i-1$ and $i+1$ sub-rectangles in the strip. This is obvious for $i+1$, since we assumed $w \geq h$. Since we know the aspect ratio of sub-rectangles when the strip has $i$ sub-rectangles is better than the case with $i-1$, we have:
\[
\frac{(w-h')i}{h} < \frac{h}{(w-h'')(i-1)}\,,
\]
where $h''$ is the height of $\rectang_{*}$ if we have $i-1$ sub-rectangles in the strip. 
In the given inequality, the left-hand side represents the aspect ratio of sub-rectangles when there are $i$ sub-rectangles in the strip, while the right-hand side represents the aspect ratio of sub-rectangles when there are $i-1$ sub-rectangles in the strip. It is important to note that in the left-hand side, the sub-rectangles are arranged horizontally since there is only one sub-rectangle left, and this is the only orientation that allows the strip to be positioned before the last sub-rectangle. Conversely, in the right-hand side, the sub-rectangles are arranged vertically. This orientation is chosen because if they were arranged horizontally, the resulting aspect ratio would not be worse than the case with $i$ sub-rectangles.
Then replacing $h'$ with $1/h(i+1)$, $h''$ with $2/h(i+1)$ and $h$ with $1/w$, we obtain:
\[
w^4 < \frac{(i+1)^2}{i^2(i-1)^2}
\]
Since $w\geq h$ and $wh=1$, we have $w\geq 1$, which gives us $i\leq 2$. Hence, $i=2$ and $\AR(\rectang_{*})=\frac{3}{w^2} \leq 3$. 
\begin{figure}[t]
\centering
  \includegraphics[scale=0.45]{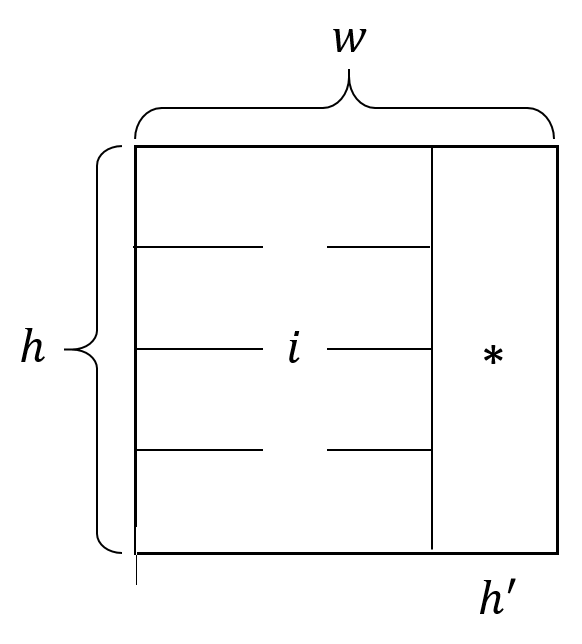}
    \caption{\label{fig:fig2} The strip immediately before the last sub-rectangle mentioned in \cref{lem:SquarifiedLastAR}.}
\end{figure}
\end{proof}

\begin{thm} 
\label{thm:SquarifiedAspectRatioUB}
If we have a rectangle with width $w$ and height 1 (with $w \geq 1$) and we want to divide it into $k$ sub-rectangles such that $w/k \leq 0.5$ using Algorithm \ref{alg:SquarifiedPartition}, the aspect ratio of all rectangles except the last one is less than or equal to 2 and for the last one it is less than or equal to 3.
\end{thm}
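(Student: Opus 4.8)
The plan is to assemble this statement from the structural lemmas already proved, viewing the subdivision as an initial run of equal-width vertical strips followed by the recursive subdivision of a much better-proportioned leftover rectangle. First I would record that $w/k\leq 1/2$ gives $k\geq 2w$, so with $h=1$ we have $w/h=w\leq k/2\leq k$; this is exactly the hypothesis needed to invoke \cref{cor:SquarifiedARbeforeDirectionChange}, which already guarantees that every rectangle produced before the first change of direction has aspect ratio at most $2$. Since $w\geq 1=h$, the algorithm indeed begins with vertical strips, so this initial run is nonempty and all of its cells are accounted for (unless $w=1$, where there are no vertical strips and the unit square is subdivided directly, with every undivided part of aspect ratio at most $2$ via \cref{lem:SquarifiedUndividedAR} and the final cell bounded by \cref{lem:SquarifiedLastAR}).

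The heart of the argument is to show that once the direction first changes, the remaining undivided rectangle is well-proportioned, after which everything reduces to the earlier lemmas. The key observation is that throughout the initial run the undivided part always retains full height $1$ and each cell has fixed area $w/k$, so the aspect-ratio-minimizing number of cells per strip is a constant $i_1$, and every strip in the run has the same width $s_1=i_1\,(w/k)$. Writing $W^{*}$ for the width of the undivided part at the moment the direction changes, the stopping rule gives $W^{*}\in(1-s_1,\,1]$, so its aspect ratio is $1/W^{*}<1/(1-s_1)$. Hence if $s_1\leq 1/2$ we immediately get $W^{*}>1/2$ and aspect ratio below $2$, and I would feed this leftover rectangle into \cref{lem:SquarifiedUndividedAR} (its hypothesis $w/h\leq 2$ now holds), which keeps every later undivided part at aspect ratio at most $2$; \cref{lem:Squarified1stStripAR} then bounds each subsequent strip by $2$, and \cref{lem:SquarifiedLastAR} bounds the final cell by $3$.

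The only obstacle is the sub-case $s_1>1/2$, and this is where the hypothesis $w/k\leq 1/2$ must be used sharply. I would analyze the optimality of $i_1$: comparing $\AR$ for $i_1$ and $i_1-1$ cells shows $i_1(i_1-1)(w/k)\leq 1$ when the cells are wide and $s_1<1/i_1$ when they are tall, so $i_1\geq 3$ forces $s_1\leq 1/2$; combined with $i_1=1\Rightarrow s_1=w/k\leq 1/2$, the case $s_1>1/2$ can occur only when $i_1=2$ and $w/k>1/4$. In that situation let $r$ be the number of cells remaining at the direction change, so $W^{*}=r\,(w/k)$. If $r\geq 2$ then $W^{*}\geq 2(w/k)=s_1>1/2$ and we are back in the good case; if $r=1$ the undivided part is itself the last cell, and the stopping inequality $W^{*}=w/k>1-2(w/k)$ forces $w/k>1/3$, whence its aspect ratio $1/(w/k)<3$, exactly the allowed bound for the last rectangle. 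Putting the pieces together---\cref{cor:SquarifiedARbeforeDirectionChange} for the initial run, \cref{lem:SquarifiedUndividedAR} and \cref{lem:Squarified1stStripAR} for the recursion on the leftover, and \cref{lem:SquarifiedLastAR} (or the direct $r=1$ estimate) for the final cell---yields the theorem. I expect the delicate bookkeeping to be the twin facts that the initial strips genuinely share a common width $s_1=i_1\,(w/k)$ and that $r=1$ is the unique way for the leftover to exceed aspect ratio $2$.
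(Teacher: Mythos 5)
Your proof is correct and ultimately rests on the same three supporting results as the paper's --- Corollary~\ref{cor:SquarifiedARbeforeDirectionChange} for the initial vertical run, Lemmas~\ref{lem:SquarifiedUndividedAR} and~\ref{lem:Squarified1stStripAR} for the recursion on the leftover, and Lemma~\ref{lem:SquarifiedLastAR} for the final cell --- but you handle the crucial transition at the first change of direction by a genuinely different argument. The paper verifies the hypothesis of Lemma~\ref{lem:Squarified1stStripAR} for the leftover via the area identity $w_r/r = w/k \leq 0.5$ and essentially asserts that the undivided part has aspect ratio at most $2$ once the direction changes. You instead extract the leftover's proportions from the greedy stopping rule: every full vertical strip shares the width $s_1=i_1(w/k)$, the leftover width satisfies $W^{*}\geq 1-s_1$, and the optimality comparison between $i_1$ and $i_1-1$ cells forces $s_1\leq 1/2$ except when $i_1=2$ and $w/k>1/4$; that exceptional case splits into $r\geq 2$ (where $W^{*}=r(w/k)\geq s_1>1/2$ restores the good case) and $r=1$ (where the leftover \emph{is} the last cell and the stopping inequality gives aspect ratio $1/(w/k)\leq 3$ directly). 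This buys you something the paper leaves implicit: when $r=1$ the leftover can have aspect ratio strictly greater than $2$ (take $w/k$ slightly above $1/3$), so a blanket claim that the undivided part reaches aspect ratio $\leq 2$ before the direction change does not cover that configuration, and your direct computation is precisely what closes it. The remaining loose ends in your write-up are cosmetic: $W^{*}$ lies in $[\,1-s_1,\,1)$ rather than $(1-s_1,\,1]$; the case $w=1$ still begins with a vertical strip since the algorithm tests $w\geq h$; and one should note that at least $i_1$ cells remain whenever a new vertical strip starts (if only $r$ cells remain while the width $r(w/k)$ is still at least $1$, then $r^2(w/k)\geq r\geq 1$, so the cells are already in the ``wide'' regime and the greedy rule stops at or before $r$), which is the fact underpinning your claim that all initial strips share the common width $s_1$.
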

\begin{proof}
Algorithm \ref{alg:SquarifiedPartition} starts with creating one strip at the left part of the rectangle (since we have $w \geq h$). The first strip will be repeated until the width of the undivided part of the rectangle becomes smaller than the height of the rectangle, at which point the direction changes. Before this change, somewhere through the process, the aspect ratio of the undivided part will become less than 2, if at the beginning it was not the case. By Corollary \ref{cor:SquarifiedARbeforeDirectionChange}, all of the sub-rectangles created so far have aspect ratios not greater than 2.

We claim that when the direction changes, we must have the ratio of the width of the undivided part to its height less than or equal to the number of remaining sub-rectangles. To prove this, let $w_r$ be the width of the undivided part of the rectangle and let $r=k-i$ be the number of remaining sub-rectangles for this part (see Figure \ref{fig:fig3}). We must show $w_r/h \leq r$ or $w_r/(k-i) \leq 1$. This can be proved by
\[
\frac{w_rh}{k-i}=\frac{wh}{k} \implies \frac{w_r}{k-i}=\frac{w}{k} \leq 0.5 < 1
\]
\begin{figure}[htb]
\centering
\vspace{-5pt}
  \includegraphics[scale=0.4]{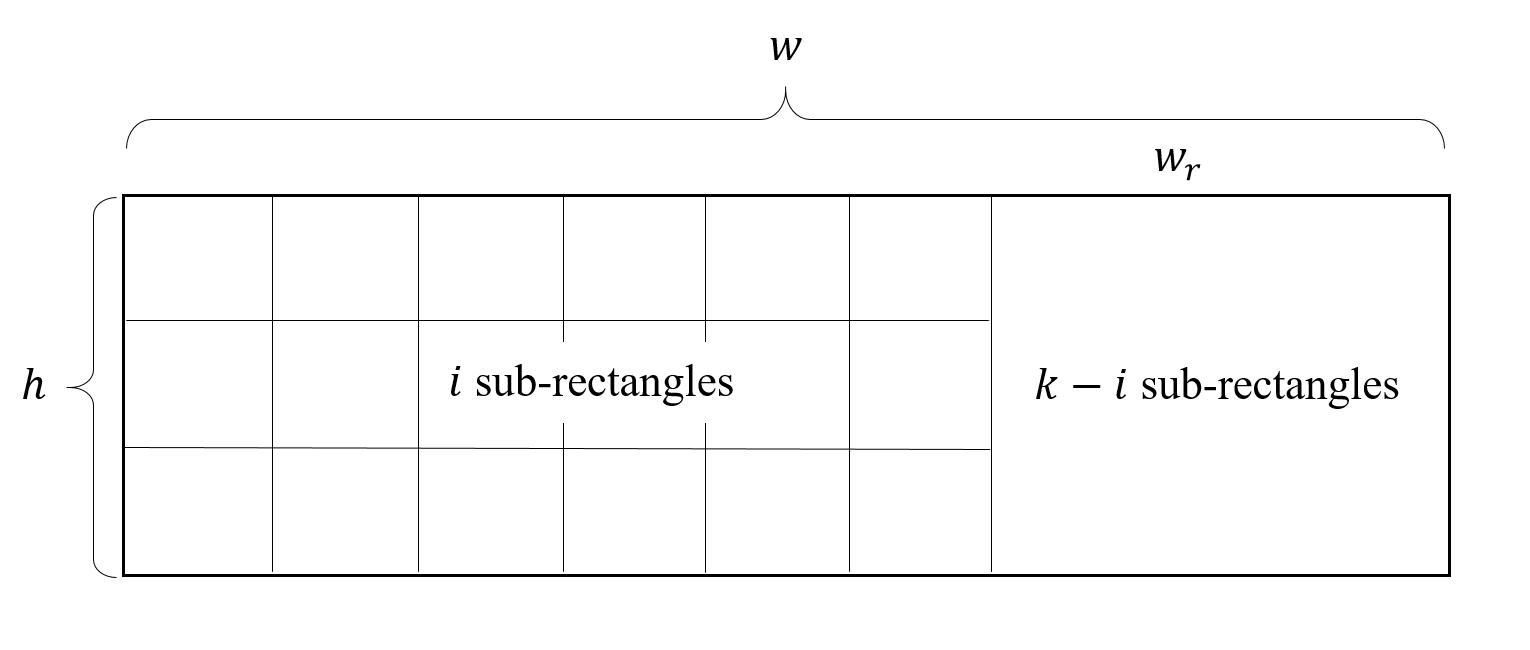}
    \protect\caption{\label{fig:fig3} \footnotesize An intermediate step of Algorithm \ref{alg:SquarifiedPartition} while $i$ sub-rectangles are fixed and $r=k-i$ sub-rectangles should be placed in the remaining part of the box with width $w_r$ and height $h$.}
  \end{figure}
  
Therefore, the condition of Lemma \ref{lem:Squarified1stStripAR} is satisfied for the undivided part after the change in direction. Also, after the change of direction, if $r\geq 2$, we have an undivided rectangle with aspect ratio not greater than 2, satisfying the condition $w/h\leq 2$ in Lemma \ref{lem:SquarifiedUndividedAR}. If the number of remaining sub-rectangles $r$ is more than one, i.e., $w/h \leq 2 \leq r$, we can conclude, by Corollary \ref{cor:SquarifiedARbeforeDirectionChange}, that the aspect ratio of sub-rectangles in the next strip is less than or equal to 2. While we have more than one sub-rectangle, we continue the algorithm by iteratively generating sub-rectangle (in strips) and undivided rectangles with aspect ratios not greater than 2. Therefore, all sub-rectangles that have been generated so far have aspect ratio less than or equal to 2. Once we get into the situation that only one sub-rectangle is remained, by Lemma \ref{lem:SquarifiedLastAR}, the aspect ratio of the last sub-rectangle is less than or equal to 3.
\end{proof}

\section{A Subdivision Algorithm}
\label{sec:SubdivisionAlg}
As mentioned in Sections \ref{sec:GeneralFramework} and \ref{sec:ConstructiveAlg}, the performance of our approach is mainly driven by how we divide the bounding box of the convex polygon into $k$ rectangular sub-regions of equal area and what would be the aspect ratios of those sub-rectangles.  Here, we modify an algorithm presented in \cite{kcenter} for solving the $k$-Centers problem. The difference between $k$-centers and the $k$-medians problems is that the former minimizes the maximum distance between each point and its closest center. This algorithm divides a rectangle into $k$ equal area sub-rectangles and we use it as our partitioning subroutine. Similar to our constructive algorithm, our subdivision algorithm takes a convex polygon $C$ and an integer $k$ and gives a partition of $C$ into $k$ sub-regions and then places a median point at the center of each sub-region.

\subsection{Description of the Subdivision Algorithm}
\label{subsec:KCentersAlg}
Similar to the constructive algorithm, our subdivision algorithm first creates the bounding box aligned with the coordinate axes. Then it breaks the bounding box into $k$ area-equal sub-rectangles with up to 6 different configurations. In each configuration, our algorithm first divides the bounding box into two parts, horizontally or vertically. Based on the number of sub-rectangles in each part, the algorithm finds the dividing point in a way that the area of sub-rectangles in the first part be equal to the area of sub-rectangles in the other part. Beside equal areas, all sub-rectangles in the same part have the same aspect ratios. This partitioning scheme is sketched in Figure \ref{fig:GridPartition} and described in Algorithm \ref{alg:GridPartitionAlg}. Then our algorithm investigates these configurations and the configuration with the smallest maximum aspect ratio of sub-rectangles is selected as the final configuration. If the maximum aspect ratio in two or more configurations are the same, the algorithm compares the minimum aspect ratio of sub-rectangles in different configurations and chooses the one with the lower value as the final configuration. Figure \ref{fig:kCenterAlgSimpleExample} shows a sample of the output of the subdivision algorithm described in Algorithm \ref{alg:SubdivideFW}.\\
 
\begin{figure}[t]
\begin{centering}
\subfloat[$\text{``flag''}={\tt VERTICAL}$]{\begin{centering}
\includegraphics[bb=0bp 0bp 618bp 244bp,width=0.4\columnwidth]{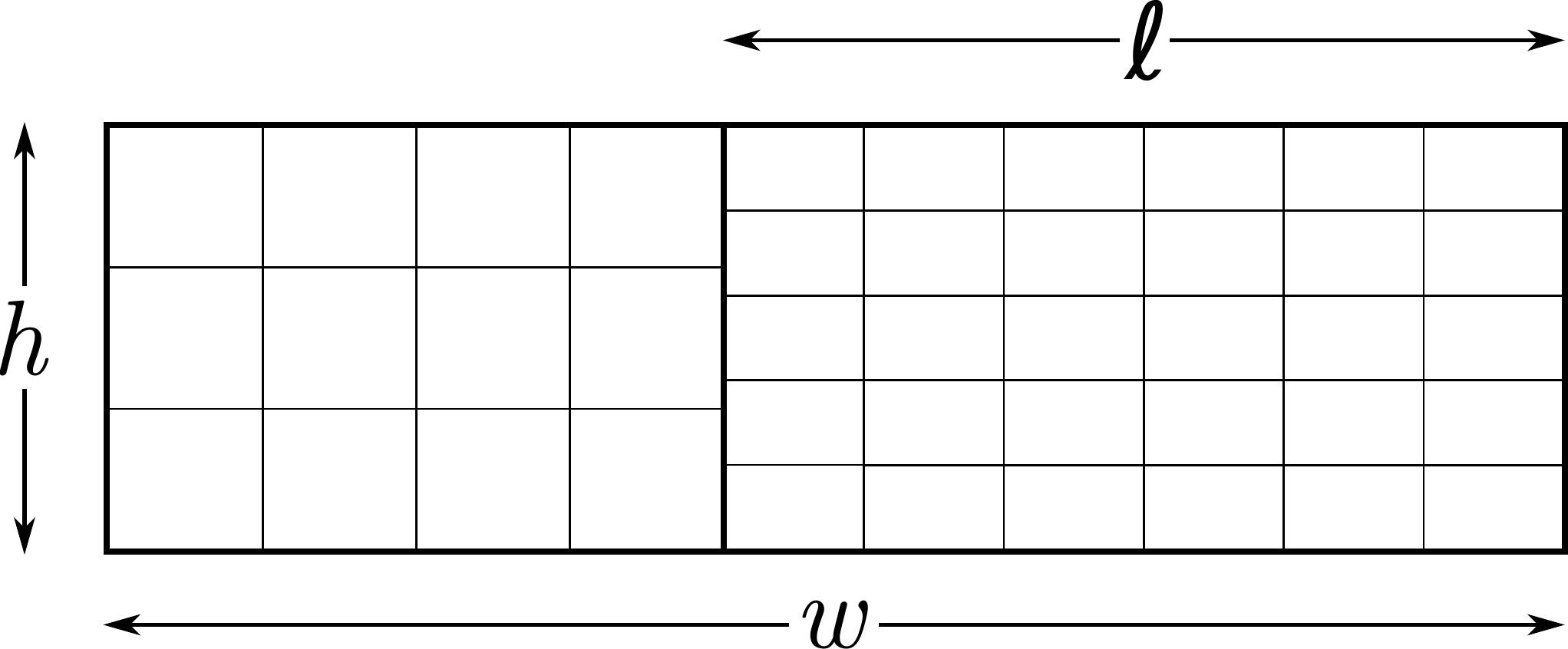}
\par\end{centering}

}\subfloat[$\text{``flag''}={\tt HORIZONTAL}$]{\begin{centering}
\includegraphics[width=0.4\columnwidth]{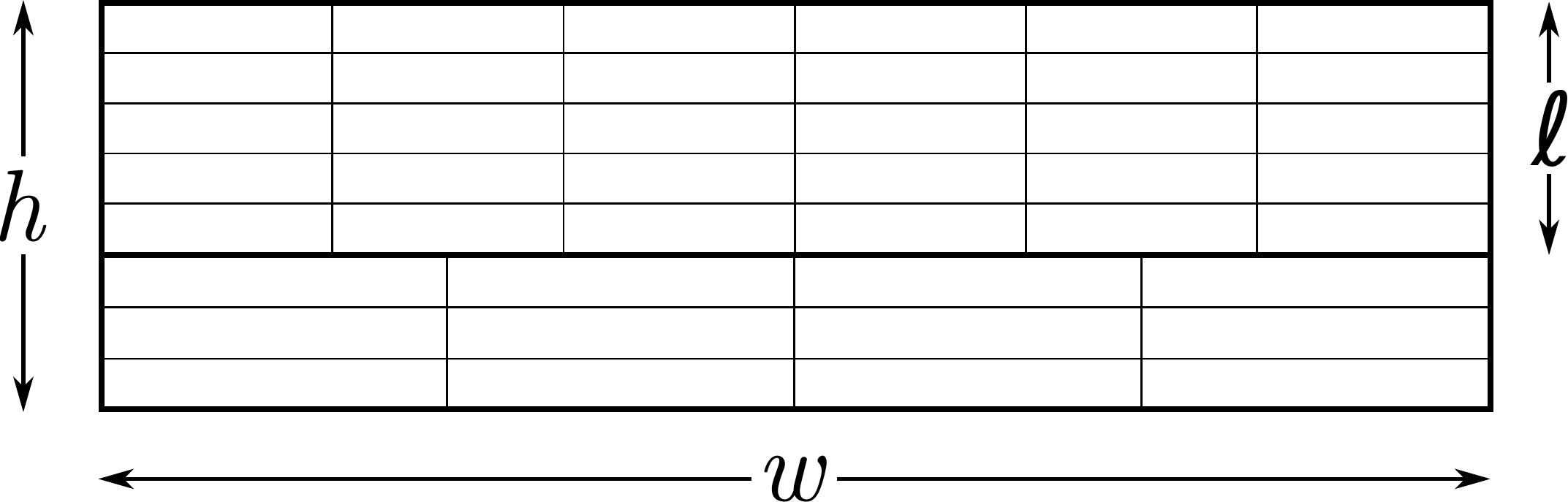}
\par\end{centering}

}
\par\end{centering}

\protect\caption{\label{fig:GridPartition}The output of Algorithm \ref{alg:GridPartitionAlg} where $(p_{1},q_{1},p_{2},q_{2})=(4,3,6,5)$ and $\ell$ is as indicated. We subdivide the rectangle into two grids, one of which has $4\times3$ rectangles and one of which has $6\times5$ rectangles; the width and height of these two grids is determined by $\ell$. The above images are reproduced from Figure 6 of \cite{kcenter}.}
\end{figure}

\begin{algorithm}[H]
\SetAlgoLined
\BlankLine
\KwIn{An axis-aligned rectangle $R$, having dimensions $w \times h$, integers $p_1$, $q_1$, $p_2$, $q_2$, a positive number $\ell$, and a ``flag'' equal to VERTICAL or HORIZONTAL.}
\KwOut{A partition of $R$ into $p_1q_1 + p_2q_2$ equal area rectangles.}
\BlankLine
\tcc{*************************************************************************************************}
\eIf{"flag" = VERTICAL}{
Let $R_1$ be the left half of $R$, having dimensions $(w-\ell) \times h$\;
Let $R_2$ be the right half of $R$, having dimensions  $\ell \times h$\;
Break $R_1$ into a $p_1 \times q_1$ rectangular grid, and call the rectangular cells $\boxdot_1,..., \boxdot_{p_1q_1}$\;
Break $R_2$ into a $p_2 \times q_2$ rectangular grid, and call the rectangular cells $\boxdot_{p_1q_1+1},..., \boxdot_{p_1q_1+p_2q_2}$\;
   }{
   Let $R_1$ be the bottom half of $R$, having dimensions $w \times (h-\ell)$\;
Let $R_2$ be the top half of $R$, having dimensions  $w \times \ell$\;
Break $R_1$ into a $p_1 \times q_1$ rectangular grid, and call the rectangular cells $\boxdot_1,..., \boxdot_{p_1q_1}$\;
Break $R_2$ into a $p_2 \times q_2$ rectangular grid, and call the rectangular cells $\boxdot_{p_1q_1+1},..., \boxdot_{p_1q_1+p_2q_2}$\;
  }
\KwRet{$\boxdot_1,...,\boxdot_{p_1q_1+p_2q_2}$}\;
\protect\caption{\label{alg:GridPartitionAlg}Algorithm ${\sf GridPartition}\:(R,p_1,q_1,p_2,q_2,\ell,\mbox{``flag''})$ takes as input an axis-aligned rectangle $R$ and positive integers $p_1,q_1,p_2$ and $q_2$ and divides $R$ into $p_1q_1+p_2q_2$ sub-rectangles with equal area.}
\end{algorithm}

 \scalebox{.9}{
\begin{algorithm}[H]
\SetAlgoLined
\BlankLine

\scriptsize
\KwIn{A convex polygon $C$ and an integer $k$.}
\KwOut{The locations of $k$ median points $p_i,\; i=1,...,k$ in $C$ that approximately minimize $\FW(C, k)$ within a factor of 2.002.}
\BlankLine
\tcc{*************************************************************************************************}
Rotate $C$ so that $\square C$ is aligned with the coordinate axes\;
Let $\square C$ denote the minimum bounding box of $C$\; 
Let $w=\width(R)$\;
Let $h=\height(R)$\;
Set $p_0 =\lfloor{\sqrt{wk/h}}\rfloor$, $q_0 =\lfloor{\sqrt{hk/w}}\rfloor$, $\mbox{minAR} =\infty$, and $\mbox{minmaxAR} =\infty$\;
\For{$p\in \lbrace p_0-1,p_0,p_0+1\rbrace$}{
Set $q=\lfloor{k/p}\rfloor$\;
\If{$p,q \geq 1 $}{
Set $s=k-pq$, so that $k=pq+s=(p-s)q+s(q+1)$\; 
Let $\ell$ be the solution to $\Big(\frac{w-\ell}{p-s}\Big)\Big(\frac{h}{q}\Big)=\Big(\frac{\ell}{s}\Big)\Big(\frac{h}{q+1}\Big)$ that satisfies $0 \leq \ell \leq w$. If no such $\ell$ exists, let $\ell = 0$\;
Let $\boxdot_1, . . . ,\boxdot_k = {\sf GridPartition}\:(\square C, p-s, q, s, q+1, \ell,$ VERTICAL)\;
Let $(x^{\prime}_{1}, . . . , x^{\prime}_{k})$ be the centers of the boxes  $\boxdot_i$\;
\eIf{$\ell \neq 0$}{
Let $\AR_1 = \max(\frac{wq^2}{hk},\frac{hk}{wq^2})$ and $\AR_2 = \max(\frac{w(q+1)^2}{hk},\frac{hk}{w(q+1)^2})$ and $\AR = \max(AR_1,AR_2)$\;
}{Let $\AR = \max(\frac{wq^2}{hk},\frac{hk}{wq^2})$\;}
\eIf{ $\AR<\mbox{minAR}$}{
Set $\mbox{minAR}=\AR$ and $x_1, . . . , x_k = x^{\prime}_{1}, . . . , x^{\prime}_{k}$\;}
{\If{$\AR=\mbox{minAR}$ \AND $\min(\AR_1,\AR_2)<\mbox{minmaxAR}$}{
Set $\mbox{minAR}=\AR$ and $\mbox{minmaxAR}=\min(\AR_1,\AR_2)$\;
Set $x_1, . . . , x_k = x^{\prime}_{1}, . . . , x^{\prime}_{k}$\;}}
}
}
\For{$q\in \lbrace q_0-1,q_0,q_0+1\rbrace$}{
Set $p=\lfloor{k/q}\rfloor$\;
\If{$p,q \geq 1 $}{
Set $s=k-pq$, so that $k=pq+s=(q-s)p+s(p+1)$\;
Let $\ell$ be the solution to $\Big(\frac{w}{p}\Big)\Big(\frac{h-\ell}{q-s}\Big)=\Big(\frac{w}{p+1}\Big)\Big(\frac{\ell}{s}\Big)$ that satisfies $0 \leq \ell \leq h$. If no such $\ell$ exists, let $\ell = 0$\;
Let $\boxdot_1, . . . ,\boxdot_k = {\sf GridPartition}\:(\square C, p, q-s, p+1, s, \ell,$ HORIZONTAL)\;
Let $(x^{\prime}_{1}, . . . , x^{\prime}_{k})$ be the centers of the boxes  $\boxdot_i$\;
\eIf{$\ell \neq 0$}{
Let $\AR_1 = \max(\frac{hp^2}{wk},\frac{wk}{hp^2})$ and $\AR_2 = \max(\frac{h(p+1)^2}{wk},\frac{wk}{h(p+1)^2})$ and $\AR = \max(AR_1,AR_2)$\;
}{Let $\AR = \max(\frac{hp^2}{wk},\frac{wk}{hp^2})$\;
}
\eIf{ $\AR<\mbox{minAR}$}{
Set $\mbox{minAR}=\AR$ and $x_1, . . . , x_k = x^\prime_{1}, . . . , x^\prime_{k}$\;}
{\If{$\AR=\mbox{minAR}$ \AND $\min(\AR_1,\AR_2)<\mbox{minmaxAR}$}{
Set $\mbox{minAR}=\AR$ and $\mbox{minmaxAR}=\min(\AR_1,\AR_2)$\;
Set $x_1, . . . , x_k = x^\prime_{1}, . . . , x^\prime_{k}$\;}}
}
}
\For {$i \in \{1,...,k\}$}{
\eIf{$x_i \in C$}{
Set $p_i=x_i$\;
}{
\eIf{$R_i \cap C$ is nonempty}{
Let $R_i^{'}$ be the minimum axis-aligned bounding box of $R_i \cap C$ and let $c_i^{'}$ denote its center\; 
Set $p_i = c_i^{'}$\;
}{
Place $p_i$ anywhere in $C$\;
}
}
}
\KwRet{$p_1,...,p_k$}\;
\protect\caption{\label{alg:SubdivideFW}Algorithm ${\sf SubdivideFW}$ takes a polygon $C$ and integer $k$ as input, partitions $C$ into $k$ sub-regions and finds locations for $k$ median points.}
\end{algorithm}
} 
~\\

\begin{figure}[htb]
\captionsetup{farskip=0pt}
\begin{centering}
  \includegraphics[scale=0.42]{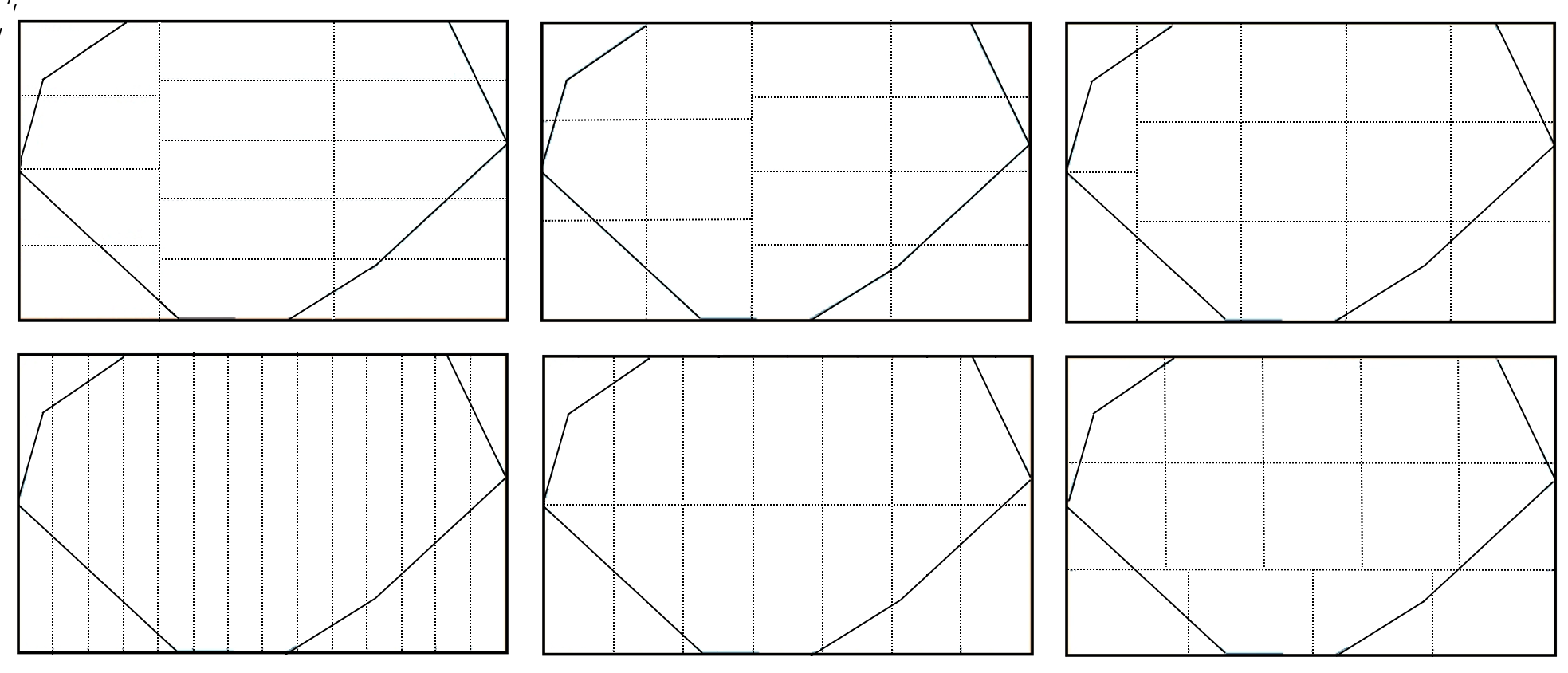}
  
    \protect\caption{\label{fig:kCenterAlgSimpleExample} Six different configurations of dividing a bounding box of a polygon into 14 sub-rectangles using Algorithm \ref{alg:SubdivideFW}. The width of bounding box is 1.4917 and the height is 0.9085. In each configuration, the box is divided into two parts. All 14 sub-rectangles have same areas. The bottom right figure is chosen among these six configurations because of having the best maximum aspect ratio of sub-rectangles. The aspect ratio of sub-rectangles in the top part of this configuration is 1.0876 and for the sub-rectangles in the bottom part is 1.4367.}
\end{centering}  
\end{figure}

\subsection{Analysis of Algorithm \ref{alg:SubdivideFW}}
\label{subsec:AnalysisKCenterAlg}

\subsubsection{Running Time of Algorithm \ref{alg:SubdivideFW}}
\label{subsubsec:RunTimeKCentersAlg}
This algorithm can be performed with running time $\mathcal{O}(n + k + k \log n)$. This is because Algorithm \ref{alg:GridPartitionAlg} takes $\mathcal{O}(k)$ operations to partition the rectangle and Algorithm \ref{alg:SubdivideFW} requires $\mathcal{O}(n)$ operations to find a minimum bounding box of $C$. The last step of Algorithm \ref{alg:SubdivideFW} consists of moving the center points to $C$ when necessary, which takes $\mathcal{O}(k \log n)$ operations using a point-in-polygon algorithm \cite{PreparataShamos1985}.

\subsubsection{Aspect Ratios in Algorithm \ref{alg:SubdivideFW}}
\label{subsubsec:AspectRatiosKCentersAlg}
Similar to \cref{subsubsec:AspectRatiosSqAlg}, we first adopt Assumptions 1 and 2 to simplify the analysis of aspect ratios.
\begin{lem}
\label{lem:SubdivisionAlgSliceCase}
If $w/k \geq 0.5$, one of the configurations of Algorithm \ref{alg:SubdivideFW} divides the bounding box into $k$ rectangles each with dimensions $\frac{w}{k} \times 1$.
\end{lem}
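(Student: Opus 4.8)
The plan is to exhibit, among the (up to six) configurations that Algorithm~\ref{alg:SubdivideFW} examines, the single iteration that produces the desired strip, and then to verify that the hypothesis $w/k\geq 0.5$ guarantees this iteration is actually reached. Under Assumptions~1 and~2 we have $h=1$ and $w\geq 1$, so the target partition consists of $k$ vertical slivers, each of dimensions $\frac{w}{k}\times 1$. I claim this is exactly the output of the \texttt{HORIZONTAL} loop in the degenerate case $q=1$.

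First I would check that the value $q=1$ is in fact tested. The algorithm sets $q_0=\lfloor\sqrt{hk/w}\rfloor=\lfloor\sqrt{k/w}\rfloor$ and iterates over $q\in\{q_0-1,q_0,q_0+1\}$. From $w/k\geq 0.5$ we obtain $k/w\leq 2$, hence $\sqrt{k/w}\leq\sqrt{2}<2$, so $q_0\in\{0,1\}$. In either case $1\in\{q_0-1,q_0,q_0+1\}$, and therefore the loop reaches the iteration $q=1$.

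Next I would trace this iteration. With $q=1$ the algorithm sets $p=\lfloor k/q\rfloor=k\geq 1$, so the guard $p,q\geq 1$ passes, and $s=k-pq=k-k=0$. Because $s=0$ the defining equation for $\ell$ degenerates (its right-hand side $\tfrac{w}{p+1}\cdot\tfrac{\ell}{s}$ is undefined), so the algorithm falls back to $\ell=0$; consequently $R_2$ is empty and the grid $(p+1)\times s=(k+1)\times 0$ contributes nothing. The call thus reduces to ${\sf GridPartition}(\square C,\,k,\,1,\,k+1,\,0,\,0)$ with flag \texttt{HORIZONTAL}, which places the entire box $R_1$ of dimensions $w\times h=w\times 1$ into a $k\times 1$ grid, yielding $k$ congruent cells each of dimensions $\frac{w}{k}\times 1$, exactly as claimed.

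The only real subtlety is the bookkeeping of the grid orientation together with the degenerate $s=0$ handling, so I would double-check the orientation against the algorithm's own aspect-ratio expression: for $q=1$ (hence $\ell=0$) it records $\AR=\max\bigl(\tfrac{hp^2}{wk},\tfrac{wk}{hp^2}\bigr)=\max\bigl(\tfrac{k}{w},\tfrac{w}{k}\bigr)$, which is precisely the aspect ratio of a $\frac{w}{k}\times 1$ rectangle; this confirms the $q=1$ configuration is the advertised strip. For completeness one may observe that the \texttt{VERTICAL} loop attains this strip only near $w\approx k$ (there $p=k$ forces $p_0\approx\sqrt{wk}\approx k$), which is why the strip must instead be produced through the \texttt{HORIZONTAL} loop across the whole range $w/k\geq 0.5$.
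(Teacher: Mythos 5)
Your proposal is correct and follows essentially the same route as the paper: both identify the $q=1$ configuration in the $q$-loop (noting $q_0=\lfloor\sqrt{k/w}\rfloor\in\{0,1\}$ when $w/k\geq 0.5$, so $q=1$ is always among the tested values), and then observe that $p=k$, $s=\ell=0$ forces a single $k\times 1$ grid of $\frac{w}{k}\times 1$ cells. Your tracing of the degenerate $s=0$ handling and the aspect-ratio cross-check are just more explicit versions of what the paper states tersely.
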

\begin{proof}
We know that $q_0=\floor{\sqrt{hk/w}}=\floor{\sqrt{k/w}}$. It is easy to show that if $0.5 \leq w/k \leq 1$, we will have $q_0=1$ and if $w/k>1$, we have $q_0=0$. 
In all cases, we have a configuration with $q=1$ and hence $p=\floor{k/q}=k$. In this case, $s=\ell=0$ and the bounding box would be partitioned into $k$ rectangles with dimensions $\frac{w}{k} \times 1$.
\end{proof}
\begin{claim} 
\label{claim:claim1}
For $p=p_0+1$, we always have $q=q_0-1$ or $q=q_0$.
\end{claim}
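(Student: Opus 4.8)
The plan is to read off $q=\lfloor k/p\rfloor$ at $p=p_0+1$ as the floor of an explicit real number and then sandwich it between $q_0-1$ and $q_0$. I would introduce the two auxiliary reals $a=\sqrt{wk/h}$ and $b=\sqrt{hk/w}$, so that by definition $p_0=\lfloor a\rfloor$ and $q_0=\lfloor b\rfloor$. The two identities $ab=k$ and $a/b=w/h$ are the workhorses: the first lets me rewrite $k/(p_0+1)$ as $ab/(p_0+1)$, and the second, together with Assumption~1 ($w\ge h$), gives $a\ge b$, which is precisely the slack the lower bound needs. (Assumption~2, $h=1$, is convenient but not essential here.)

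For the upper bound I would use that $p_0=\lfloor a\rfloor$ forces $p_0+1>a$ strictly, whence $ab/(p_0+1)<b$; monotonicity of the floor then yields $q=\lfloor ab/(p_0+1)\rfloor\le\lfloor b\rfloor=q_0$. For the lower bound I would use the other inequality, $p_0+1\le a+1$, to get $ab/(p_0+1)\ge ab/(a+1)=b-\tfrac{b}{a+1}$. This is where $a\ge b$ enters: it gives $a+1>b$, so $\tfrac{b}{a+1}<1$ and therefore $ab/(p_0+1)>b-1\ge q_0-1$. Since $q_0-1$ is an integer, the strict inequality $k/(p_0+1)>q_0-1$ forces $q\ge q_0-1$. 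Combining the two bounds gives $q_0-1\le q\le q_0$, i.e.\ $q\in\{q_0-1,q_0\}$, which is the claim.

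The only real subtlety I expect is the lower bound: the crude estimate $p_0+1\le a+1$ is not by itself enough, and what saves it is controlling $\tfrac{b}{a+1}$, which in turn rests on $a\ge b$, i.e.\ on the standing hypothesis $w\ge h$. I would flag that this is exactly why the conclusion is \emph{asymmetric} — it pins $q$ close to $q_0$ rather than letting it drop by two — and that the analogous statement for the HORIZONTAL loop (fixing $q=q_0+1$ and reading off $p$) is the mirror image, obtained by interchanging the roles of $a$ and $b$. Finally I would dispose of the degenerate edge cases in one line: if $b$ happens to be an integer the upper bound tightens to $q\le q_0-1$, which still lies in the claimed set, and the requirement $p,q\ge1$ ensuring the configuration is actually examined is inherited from the enclosing \texttt{if} in Algorithm~\ref{alg:SubdivideFW}.
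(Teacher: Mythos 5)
Your proof is correct and follows essentially the same route as the paper's: the upper bound comes from $p_0+1>\sqrt{wk/h}$, and the lower bound from $p_0+1\le\sqrt{wk/h}+1$ combined with $w\ge h$ (the paper phrases this as $w\ge 1$ under its normalization $h=1$), followed by taking floors. Your packaging via $a$, $b$ with $ab=k$ and the strict-inequality-over-an-integer step is a slightly cleaner presentation of the identical argument.
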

\begin{proof}
When $p=p_0+1$, we have $q = \Bigl\lfloor\frac{k}{1+ p_0}\Bigr\rfloor=\Bigl\lfloor\frac{k}{1+\floor{\sqrt{kw}}}\Bigr\rfloor$.
We know that $\frac{\sqrt{kw}}{1+\floor{\sqrt{kw}}}<1$. We have, 
\[
\frac{\sqrt{kw}}{1+\floor{\sqrt{kw}}}<1 \; \Longrightarrow \; \frac{k}{1+\floor{\sqrt{kw}}}<\frac{\sqrt{k}}{\sqrt{w}} \; \Longrightarrow \; q=\Bigl\lfloor\frac{k}{1+\floor{\sqrt{kw}}}\Bigr\rfloor \leq \Bigl\lfloor\sqrt\frac{k}{w}\Bigr\rfloor = q_0
\]
For the lower bound we start with the fact that $w\geq 1$.
\[
w\geq 1\Rightarrow w\sqrt{k}+\sqrt{w} \geq \sqrt{k} \;\Rightarrow \; 
 w\sqrt{k}+\sqrt{w}+k\sqrt{w} \geq \sqrt{k}+k\sqrt{w} \;\Rightarrow \;
 \sqrt{w}(k+1+\sqrt{kw}) \geq \sqrt{k}(1+\sqrt{wk}) 
 \;\Rightarrow \; \]
 \[
 \dfrac{\sqrt{k}}{\sqrt{w}} \leq \dfrac{k+1+\sqrt{kw}}{1+\sqrt{kw}}
 \;\Rightarrow \; \dfrac{\sqrt{k}}{\sqrt{w}}-1 \leq \dfrac{k}{1+\sqrt{kw}} \;\Rightarrow \;
\Bigl\lfloor{\sqrt{\dfrac{k}{w}}} \Bigr\rfloor -1 \leq \Bigl\lfloor \dfrac{k}{1+\sqrt{kw}}\Bigr\rfloor \leq \Bigl\lfloor \dfrac{k}{1+\lfloor \sqrt{kw} \rfloor}\Bigr\rfloor \;\Rightarrow \; q_0-1 \leq q \]
 
Hence, for $p=p_0+1$, we have $q \in \{q_0-1,q_0\}$.
\end{proof}
\begin{lem}
\label{lem:claimi}
At least one of the configurations made with $p=p_0$ or $p=p_0+1$, has $q=q_0$
\end{lem}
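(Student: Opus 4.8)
The plan is to combine the already-established Claim \ref{claim:claim1} with a direct two-sided estimate of $\lfloor k/p_0\rfloor$. Recall that under Assumptions 1 and 2 we have $h=1$, so $p_0=\lfloor\sqrt{wk}\rfloor$ and $q_0=\lfloor\sqrt{k/w}\rfloor$, and that for any chosen $p$ the algorithm sets $q=\lfloor k/p\rfloor$. Claim \ref{claim:claim1} already guarantees that the configuration with $p=p_0+1$ yields $q\in\{q_0-1,q_0\}$. If that $q$ equals $q_0$ there is nothing more to show, so the entire argument reduces to the complementary case in which $p=p_0+1$ gives $q=q_0-1$; in that case I would prove that the configuration with $p=p_0$ must give $q=q_0$.

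To carry this out I would sandwich $\lfloor k/p_0\rfloor$ between $q_0$ and $q_0+1$. For the lower bound, note $p_0\ge 1$ (since $wk\ge 1$) and $p_0=\lfloor\sqrt{wk}\rfloor\le\sqrt{wk}$, so $k/p_0\ge k/\sqrt{wk}=\sqrt{k/w}$; monotonicity of the floor then gives $\lfloor k/p_0\rfloor\ge\lfloor\sqrt{k/w}\rfloor=q_0$. For the upper bound I would invoke the hypothesis of the remaining case: $\lfloor k/(p_0+1)\rfloor=q_0-1$ forces $k<q_0(p_0+1)=q_0p_0+q_0$.

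The ingredient that actually closes the upper bound is the observation that $q_0\le p_0$, which is where Assumption 1 enters. Since $w\ge h$ becomes $w\ge 1$ after scaling, we have $w^2\ge 1$, hence $wk\ge k/w$ and $\sqrt{wk}\ge\sqrt{k/w}$, so $p_0=\lfloor\sqrt{wk}\rfloor\ge\lfloor\sqrt{k/w}\rfloor=q_0$. Feeding $q_0\le p_0$ into the previous line gives $k<q_0p_0+q_0\le q_0p_0+p_0=(q_0+1)p_0$, so $k/p_0<q_0+1$ and therefore $\lfloor k/p_0\rfloor\le q_0$. Combined with the lower bound, this pins $\lfloor k/p_0\rfloor=q_0$, meaning the $p=p_0$ configuration has $q=q_0$, which finishes the proof.

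I do not expect a genuine obstacle here; the argument is a short chain of floor inequalities resting on the two facts $p_0\le\sqrt{wk}$ and $q_0\le p_0$. The only point requiring care is the boundary regime: the lemma should be read where $q_0\ge 1$, since the slice case $w/k\ge 0.5$ (which forces $q_0=0$) is disposed of separately by Lemma \ref{lem:SubdivisionAlgSliceCase}, ensuring that ``$q=q_0$'' refers to a legitimate configuration with $q\ge 1$.
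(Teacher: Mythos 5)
Your proposal is correct and follows essentially the same route as the paper: invoke Claim \ref{claim:claim1} to reduce to the case where $p=p_0+1$ yields $q=q_0-1$, then sandwich $\lfloor k/p_0\rfloor$ between $q_0$ (via $p_0\le\sqrt{wk}$) and $q_0+1$ (via $q_0\le p_0$, which is where $w\ge 1$ enters). Your upper-bound step, passing directly from $k<q_0(p_0+1)$ to $k<(q_0+1)p_0$, is a slightly cleaner rendering of the paper's computation with the auxiliary parameter $\alpha$, but it is the same argument.
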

\begin{proof}
By \cref{claim:claim1}, if we set $p=p_0+1$, we would have $q=q_0$ or $q=q_0-1$. If in this case $q=q_0$, we are done. Therefore, it suffices to show that if $q=q_0-1$ when $p=p_0+1$, in the configuration that we set $p=p_0$, we will have $q=q_0$.
\\We prove this in two parts. First, we show that if $p=p_0$, we will have $q=\floor{k/p}=\floor{k/p_0} \geq q_0$ and then we show that $q < q_0+1$.
First,
\[
p_0=\floor{\sqrt{wk}} \; \Rightarrow \; \frac{k}{p_0}=\frac{k}{\floor{\sqrt{wk}}} \geq \frac{k}{\sqrt{wk}}=\sqrt{\dfrac{k}{w}} \geq \floor{\sqrt{\dfrac{k}{w}}} =q_0 \;\Rightarrow\; q=\floor{\frac{k}{p_0}} \geq q_0
\]
Second, suppose that when $p=p_0+1$, we have $q=\floor{\frac{k}{p_0+1}}=q_0-1$, then
\[
\frac{k}{p_0+1}=q_0-\alpha, \quad 0<\alpha \leq 1 \; \Longrightarrow \; k=p_0q_0+q_0-\alpha p_0-\alpha
\; \Longrightarrow \; \frac{k}{p_0}=q_0+\frac{q_0}{p_0}-\alpha-\frac{\alpha}{p_0}
\]
Therefore, for the case of $p=p_0$ we have
\[
q=\lfloor\frac{k}{p_0}\rfloor=q_0+\lfloor\frac{q_0}{p_0}-\alpha-\frac{\alpha}{p_0}\rfloor
\]
It is enough to show that $\frac{q_0}{p_0}-\alpha-\frac{\alpha}{p_0} < 1$. Since $w\geq h$, we always have $p_0 \geq q_0$ and thus, $\frac{q_0}{p_0}\leq 1$. Since $\alpha>0$, we can conclude that $\frac{q_0}{p_0}-\alpha-\frac{\alpha}{p_0} < 1$.
\end{proof}

Using Lemmas \ref{lem:SubdivisionAlgSliceCase} and \ref{lem:claimi} we can analyze the aspect ratios of the output rectangles of Algorithm \ref{alg:SubdivideFW} by considering several cases. To avoid repetition we combine this section with Section \ref{subsubsec:ApxFactorSubdivideAlg}.

\section{Approximation Factors}
\label{sec:ApxFactors}

\subsection{Upper Bound on $\FW(C,k)$ for Algorithms \ref{alg:ConstructFW} and \ref{alg:SubdivideFW}}
\label{sec:UB-Algs}

In order to derive an upper bound for $\FW(C, k)$ under Algorithms \ref{alg:ConstructFW} and \ref{alg:SubdivideFW}, we first make a straightforward observation that validates our use of the upper bounding function $\Phi_{UB}$ from Equation (\ref{eq:phiub}). Recall
that Algorithm \ref{alg:ConstructFW} fits $k$ rectangles of equal area in the bounding box of $C$, i.e., $R=\square C$, Algorithm \ref{alg:SubdivideFW} partitions $\square C$ into $k$ rectangles of equal area, and that in both algorithms $C$ is initially oriented so that its diameter is aligned with the coordinate $x$-axis. It is easy to see, then, that for any rectangle $R_i$ produced by either of these algorithms, it must be the case that the region $R_i \cap C$ contains a horizontal line segment whose length is equal to width($R_i \cap C$) (a similar fact was previously proven in Claim 4.1 of \cite{minCostLoadBalancing}). Thus, we can safely conclude that, if Algorithms \ref{alg:ConstructFW} and \ref{alg:SubdivideFW} produce $k$ rectangles with equal areas and dimensions $w_i \times h_i$, where $w_i$ is the length of the longer edge, a valid upper bound for $\FW(C, k)$ is indeed  $ \sum_i \Phi_{UB}(A_i,w_i, h_i)$, where $A_i = \text{Area}(R_i \cap C)$. By (\ref{eq:phiub}) we have
\[
\FW(C,k) = \sum_{i=1}^{k} \FW(C\cap R_i) \leq \sum_{i=1}^{k} \Phi_{UB}(A_i,w_i,h_i) 
\] 
It is not hard to verify algebraically that, for any fixed $A_i$, the function $\Phi_{UB}\left(A_{i},w_{i},h_{i}\right)$ is maximized when the aspect ratio of $R_i$ is as large as possible. Let $\beta=\max_{i=1,...,k} \AR(R_i)$. Thus, for an upper bound, and using Lemma \ref{lem:UB}, we can replace $w_i$ with $\sqrt{2A_i\beta}$ and $h_i$ with $\sqrt{2A_i/\beta}$. By observing that $\Phi_{UB}$ is concave in $A_i$, we ultimately conclude that:
\begin{equation}
\label{eq:GeneralUpperrBound}
\FW(C,k)\leq k \cdot \Phi_{UB}(\frac{A}{k},\sqrt{\frac{2A\beta}{k}},\sqrt{\frac{2A}{k\beta}}\,),
\end{equation}
We can derive a more explicit upper bound for $\FW(C, k)$ by considering the relationship between $k$ and $w$, as described in the next section.

\subsection{Approximation Factor of Algorithms \ref{alg:ConstructFW} and \ref{alg:SubdivideFW}}
\label{sec:ApproxFactors}

We analyze each algorithm by breaking all possibilities into several cases in Sections \ref{subsubsec:ApxFactorConstructAlg} and \ref{subsubsec:ApxFactorSubdivideAlg}. The following theorem summarizes the results of this analysis.  
\begin{thm}
The solution to the $k$-medians problem provided by Algorithm \ref{alg:ConstructFW} and Algorithm \ref{alg:SubdivideFW} are within factor 2.002 of optimality.
\end{thm}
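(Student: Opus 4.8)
The plan is to bound the approximation factor $\rho$ by the ratio of the upper bound on $\FW(C,k)$ in \eqref{eq:GeneralUpperrBound} to the lower bound in \eqref{eq:FinalLB}, and to show that this ratio never exceeds $2.002$ over the whole feasible parameter range, treating the two algorithms separately but through a common final computation. After the normalization $h=1$, $w\ge 1$, $\tfrac{w}{2}\le A\le w$, the only free quantities are the per-cell area $a=A/k$, the fill ratio $A/(wh)\in[\tfrac12,1]$, and the worst-case aspect ratio $\beta=\max_i\AR(R_i)$. Since $\Phi_{UB}$ and $\Phi_{LB}$ both scale homogeneously in area, $\rho$ depends only on these normalized quantities, reducing the theorem to a low-dimensional optimization of
\[
\rho \;\le\; \frac{\Phi_{UB}\!\left(\tfrac{A}{k},\sqrt{\tfrac{2A\beta}{k}},\sqrt{\tfrac{2A}{k\beta}}\right)}{\Phi_{LB}\!\left(\tfrac{A}{k},1\right)}.
\]

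The aspect ratio $\beta$ is supplied by the partitioning lemmas, and the first step is to split on $w/k$. In the slice regime $w/k\ge\tfrac12$, Lemma~\ref{lem:SquarifiedSliceCase} (for Algorithm~\ref{alg:ConstructFW}) and Lemma~\ref{lem:SubdivisionAlgSliceCase} (for Algorithm~\ref{alg:SubdivideFW}) produce identical $\tfrac{w}{k}\times 1$ cells, so $\beta=\max\{w/k,\,k/w\}$ uniformly and the ratio collapses to a function of $w/k$ and the fill ratio. When $w/k\ge 1$ this $\beta$ is unbounded, but the cells — and hence the Voronoi regions they lower-bound — are genuinely elongated, and here the slab form $\Phi_{LB_2}$ of $\Phi_{LB}$ rather than the disk form $\Phi_{LB_1}$ is exactly what keeps the ratio bounded. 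In the general regime $w/k<\tfrac12$, the aspect-ratio Theorem~\ref{thm:SquarifiedAspectRatioUB} (and, for Algorithm~\ref{alg:SubdivideFW}, the configuration counts of Claim~\ref{claim:claim1} and Lemma~\ref{lem:claimi}) give $\beta\le 2$ for every cell except the last and $\beta\le 3$ for the last.

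In this general regime a single cell charged at $\beta=3$ through the uniform rate of \eqref{eq:GeneralUpperrBound} would already push the bound above $2$, so the key refinement is to charge the $k-1$ good cells at $\beta=2$ and isolate the one bad last cell at $\beta=3$, writing
\[
\FW(C,k)\;\le\;(k-1)\,\Phi_{UB}\!\left(\tfrac{A}{k},w_2,h_2\right)+\Phi_{UB}\!\left(\tfrac{A}{k},w_3,h_3\right),
\]
where $(w_2,h_2)$ and $(w_3,h_3)$ are the cell dimensions at aspect ratios $2$ and $3$. Setting $U_2=\Phi_{UB}(\tfrac{A}{k},w_2,h_2)$, $U_3=\Phi_{UB}(\tfrac{A}{k},w_3,h_3)$, and $L=\Phi_{LB}(\tfrac{A}{k},1)$, division by $kL$ yields $\rho\le \tfrac{U_2}{L}+\tfrac{U_3-U_2}{kL}$; the correction term is positive and decreasing in $k$, and it is this term, largest at the smallest admissible $k$, that accounts for the excess of $0.002$ over the clean factor of $2$ obtained from the good cells alone (for the few small $k$ at which it matters, the expression is checked directly, with $k=1$ covered by Remark~\ref{rem:1median}).

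The final step is to maximize $\rho$ over $a$ and the fill ratio in each subregion. This requires pushing through the piecewise structure of both bounds: $\Phi_{UB}$ has the breakpoint $A_c$ below which it is linear in area and above which it is capped at $\FW_{\rectang}(w,h)$, as in \eqref{eq:phiub}, while $\Phi_{LB}$ switches at $r=\tfrac{h}{2}$ between its disk form $\tfrac{2}{3}\pi r^3$ and its slab form $\Phi_{LB_2}$. I expect the main obstacle to be precisely this case-by-case maximization of a ratio of two nonsmooth, transcendental functions — $\arcsin$ and $\log$ entering through $\Phi_{LB_2}$, and $\FW_{\rectang}$ through $\Phi_{UB}$ — where one must locate the extremal configuration in each subregion, verify whether it is interior or on a boundary, and confirm that the supremum across all cases is $2.002$. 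The two algorithms differ only in the aspect-ratio inputs established above and share this concluding optimization.
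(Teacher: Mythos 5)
Your proposal follows essentially the same route as the paper: the same reduction to the ratio of the upper bound \eqref{eq:GeneralUpperrBound} to the lower bound \eqref{eq:FinalLB}, the same primary split on $w/k$ at $1/2$ using Lemmas \ref{lem:SquarifiedSliceCase} and \ref{lem:SubdivisionAlgSliceCase} for the slice regime and Theorem \ref{thm:SquarifiedAspectRatioUB} (respectively Claim \ref{claim:claim1} and Lemma \ref{lem:claimi}) for the aspect ratios elsewhere, the same refinement of charging $k-1$ cells at aspect ratio $2$ and isolating the single cell at aspect ratio $3$, and the same concluding case-by-case optimization over the per-cell area and fill ratio through the piecewise structure of $\Phi_{UB}$ and $\Phi_{LB}$. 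The paper additionally subdivides the slice regime at $w/k=1000$ and, for Algorithm \ref{alg:SubdivideFW}, into six subcases of $w/k$ with separately computed values of $\alpha_c$ and $\rho_{\max}$, but these are implementation details of the optimization you describe rather than a different idea.

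There is, however, one concrete error in your account of where the constant $2.002$ comes from. You attribute the excess of $0.002$ over the ``clean factor of $2$'' to the correction term $\frac{U_3-U_2}{kL}$ arising from the one bad cell of aspect ratio $3$ in the regime $w/k<\tfrac12$. The paper's analysis of exactly that configuration (Case III of Section \ref{subsubsec:ApxFactorConstructAlg}, worst case $k=3$, $\alpha_1=0.8844z$) yields $\rho\le 1.9614$, comfortably below $2$. The binding case is instead the slice regime with $w/k$ very large: there the upper bound degenerates (in the $L_1$ approximation) to $0.25\,\alpha(z+1)$ and the lower bound to $0.25\,\alpha^2$, giving
\[
\rho \;=\; \frac{z+1}{\alpha}\;\le\;\frac{z+1}{z/2}\;=\;2+\frac{2}{z},
\]
which exceeds $2$ for every finite $z$ and is only pushed below $2.002$ by taking $z>1000$ (with the companion computation showing $\rho\le 2$ for $0.5\le z\le 1000$). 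So the regime you describe as being ``kept bounded'' by the slab lower bound is precisely the one that produces the worst ratio, and it is bounded by $2.002$, not by $2$. If you carried out your optimization expecting the extremum in the bad-aspect-ratio case, you would either miss the true supremum or be unable to certify the clean bound of $2$ you implicitly claim for the slice regime; the rest of the argument stands once this is corrected.
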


\subsubsection{Approximation Factor of Algorithm \ref{alg:ConstructFW}}
\label{subsubsec:ApxFactorConstructAlg}

We break our analysis into three possible cases: $0.5 \leq w/k \leq 1000$, $w/k > 1000$, and $w/k < 0.5$. For simplicity in the rest of the analysis, let $\alpha = A/k$, and $z=w/k$. Approximation factors for each of these cases are as follows:\\
~\\ 
\textbf{CASE I $\bm{(0.5 \leq z \leq 1000}$):} 
By Lemma \ref{lem:SquarifiedSliceCase}, in this case, our algorithm divides the bounding box into $k$ rectangles each with dimensions $\frac{w}{k} \times 1$. 
For the upper bound we have
\[
\FW(C,k) \leq k \cdot \Phi_{UB}(\frac{A}{k},\frac{w}{k},1) = k \cdot \Phi_{UB}(\alpha,z,1)
\]
For the lower bound we have $\FW(C,k) \geq k \cdot \Phi_{LB}(\frac{A}{k},1) = k\cdot \Phi_{LB}(\alpha,1)$. 
To pick the best lower bound in (\ref{eq:LowerBound}), we investigate two cases depending on the value of $\alpha$. \\ 
~\\
\textbf{CASE I.1 ($\bm{\alpha \leq \frac{\pi}{4}}$):}
The smallest bounding box of a convex polygon has an area that is no greater than twice the area of the polygon. Thus, we have the inequality $z/2 \leq \alpha \leq \min(z, \pi/4)$, where $z$ is bounded above by $\pi/2$. The lower bound is derived from  (\ref{eq:FirstLB}), i.e., $\Phi_{LB}(C)$ comes from the first case in (\ref{eq:LowerBound}). Computational analysis demonstrates that in this case the approximation factor $\rho=\Phi_{UB}/\Phi_{LB}$ is bounded above by 1.88, which occurs at $z = 0.5$ and $\alpha = z/2 = 0.25$ (see Figure \ref{fig:lb1_case}). Note that while Figure \ref{fig:lb1_case} shows a spike in the ratio $\Phi_{UB}/\Phi_{LB}$ around $z=1$, the algorithm actually generates the results closer to optimality when $z=1$. This discrepancy arises because the upper bound we use in our proofs amplifies the ratio near $z=1$. 
\begin{figure*}[!ht]
\centering
  \includegraphics[scale=0.65]{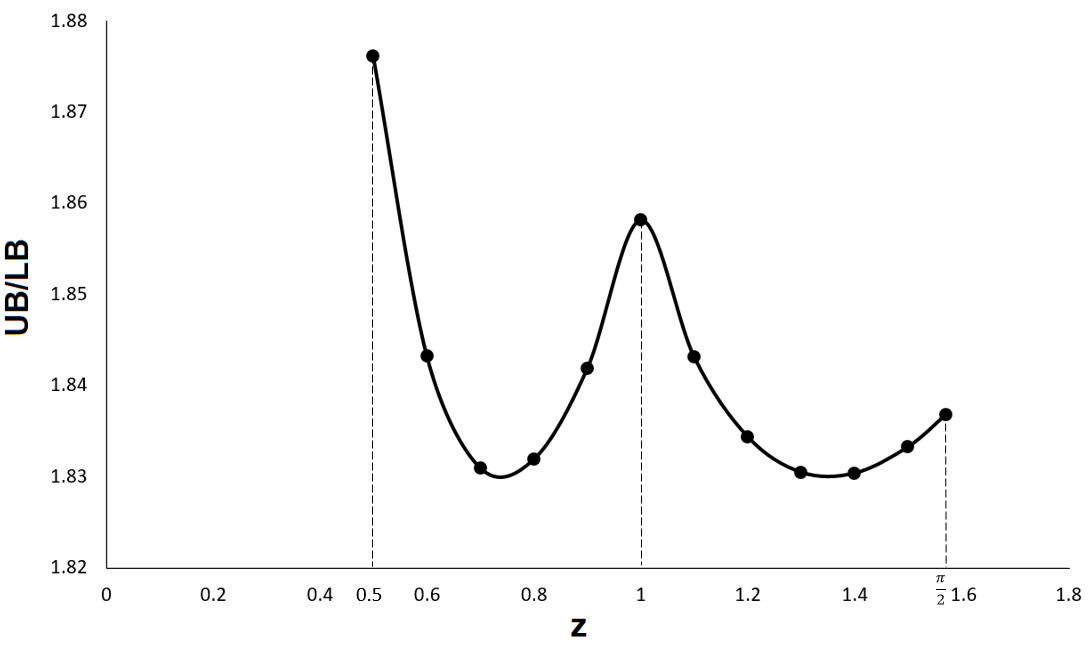}
    \protect\caption{\label{fig:lb1_case} \footnotesize The approximation factor of Algorithm \ref{alg:ConstructFW} when $\alpha \leq \dfrac{\pi}{4}$, $z \geq 0.5$ and $\alpha = \dfrac{z}{2}$.}
\end{figure*}

\noindent\textbf{CASE I.2 $\bm{(\alpha > \frac{\pi}{4}})$:}
For this case, we can establish that $z \geq \frac{\pi}{4}$ due to the given condition that $z/2 \leq \alpha \leq z$. In this scenario, the value of $\Phi_{LB}(C)$ corresponds to the second case in (\ref{eq:LowerBound}). Computational analysis shows, as illustrated in Figure \ref{fig:lb2_case}, that in this case $\rho \leq 2$ and the maximum for each value of $z$ occurs at $\alpha = z/2$.
\begin{figure*}[!ht]
\centering
  \includegraphics[scale=0.75]{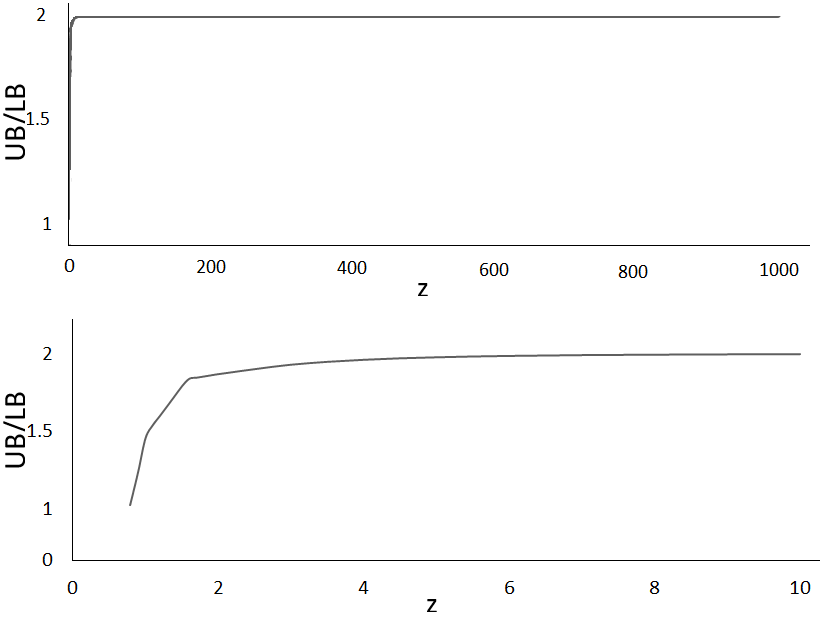}
    \protect\caption{\label{fig:lb2_case} \footnotesize The approximation factor of Algorithm \ref{alg:ConstructFW} when $\alpha > \dfrac{\pi}{4}$, $0.5 \leq z \leq 1000$ and $\alpha = \dfrac{z}{2}$. Bottom figure shows the bounds ratio for $z \leq 10$.}
\end{figure*}
\\
~\\ 
\textbf{CASE II $\bm{( z > 1000)}$:} 
In this case, similar to the last case our algorithm divides the bounding box into $k$ rectangles each with dimensions $\frac{w}{k} \times 1$. However, since the bounding box $\square C$ is very long and skinny we can approximate $\Phi_{UB}$ using the $L_1$ norm and $\Phi_{LB}$ using $L_{\infty}$ norm. As the value of $z$ increases and consequently, the aspect ratio of the sub-rectangles becomes larger, the value of ${\FW}_{\rectang}(z,1)$ approaches ${\FW}_{1/2}(z,2)$. This phenomenon occurs because when the width is significantly larger than the height of the rectangle, the change of the Fermat-Weber due to displacement of Fermat-Weber point along the rectangle's height is minimal. Hence, we can see $\alpha_c$ defined as $A_c$ in (\ref{eq:phiub}), i.e., ${\FW}_{1/2}\left(z,\frac{2\alpha_{c}}{z}\right)={\FW}_{\rectang}\left(z,1\right)$, for a large $z$ will becomes equal to $z$, 
i.e., $\alpha_c = z$. 
Hence, the upper bound is the concave envelope of $\FW_{1/2}\left(z,\frac{2\alpha_{c}}{z}\right)$ or $\sfrac{\alpha}{z}\,{\FW}_{\rectang}(z,1)$ for all values of $\alpha$. Under an $L_1$ norm, we have ${\FW}_{\rectang}(z,1) = 0.25(z^2+z)$ 
(from Equation (\ref{eq:FWRectangleL1}) in Lemma \ref{lem:FWRectangle} in Section \ref{sec:FWCommonObjects} of the Online Supplement). The upper bound, therefore is precisely:
\[
\Phi_{UB}(\alpha,z,1)= \frac{\alpha}{z}0.25(z^2+z) = 0.25 \alpha (z+1)
\]

For the lower bound, it is easy to see that instead of the slab-shape in Figure \ref{fig:hexagon1}, when we use the $L_\infty$ norm, the shape within our slab of height 1 with minimal Fermat-Weber value is simply a rectangle with dimensions $\alpha \times 1$, whose Fermat-Weber value is $0.25\alpha^2$ (from Equation (\ref{eq:FWRectangleLinfty}) in Lemma \ref{lem:FWRectangle} in Section \ref{sec:FWCommonObjects} of the Online Supplement). Therefore, for the lower bound we have
\[
\Phi_{LB}(\alpha,1) = 0.25\alpha^2
\]
The approximation factor is therefore:
\[
\rho = \frac{\Phi_{UB}}{\Phi_{LB}}= \frac{0.25\alpha(z+1)}{0.25\alpha^2} = \frac{z+1}{\alpha} < 2.002\,,
\]
since $\alpha \geq z/2$ and $z > 1000$.

~\\
\textbf{Case III $\bm{(z < 0.5)}$:}
Theorem \ref{thm:SquarifiedAspectRatioUB} shows that Algorithm \ref{alg:SquarifiedPartition} divides the bounding box into $k$ rectangles, while the aspect ratio of all except one rectangle is less than or equal to 2 and the aspect ratio of the last rectangle is at most 3.

\normalsize
  
We know that once the aspect ratio of the sub-rectangles get larger, the Fermat-Weber value and hence the approximation factor of our algorithm increases. Here, we consider the worst case for the aspect ratios of the sub-rectangles and find the approximation factor. According to Theorem \ref{thm:SquarifiedAspectRatioUB}, all sub-rectangles, except one, have an aspect ratio of 2 or less, while the last sub-rectangle has an aspect ratio of at most 3. Therefore, we consider a configuration with $k-1$ sub-rectangles of dimensions $\sqrt{2z}\times \sqrt{z/2}$ (having an aspect ratio of 2) and one sub-rectangle with dimensions $\sqrt{3z}\times \sqrt{z/3}$ (having an aspect ratio of 3). Since the aspect ratio of the sub-rectangles are not identical in this worst-case scenario, we initially fix the area of the polygon contained within the sub-rectangle with an aspect ratio of 3. Then, assuming that the other sub-rectangles are identical, we proceed with the analysis similar to the previous case. We denote $\alpha_1$ as the area of the polygon within the sub-rectangle with an aspect ratio of 3 and $\alpha_2$ as the area of the polygon within the other sub-rectangles. Recall that this area must be equal within those sub-rectangles in order to maximize the approximation factor. 
Hence, $\alpha_1 \leq z$ and $\frac{A-\alpha_1}{k-1} \leq \alpha_2 \leq z$.
The upper bound is calculated by summing the upper bound for the sub-rectangle with aspect ratio 3 and the other sub-rectangles.
\[
\Phi_{UB} = \Phi_{UB_{(\AR_{\rectang}=3)}} + (k-1) \cdot \Phi_{UB_{(\AR_{\rectang}=2)}} 
\]

In the case of a rectangle with an aspect ratio of 3, the value of $\alpha_c$ can be calculated as $0.8844z$, where $\alpha_c$ is defined by the equation $\alpha_c:{\FW}_{1/2}(z, \frac{2\alpha_c}{z})={\FW}_{\rectang}(z,1)$. Consequently, when we set $\alpha_1$ as a fixed value, we compare it with $\alpha_c$. If $\alpha_1$ is less than $\alpha_c$, we consider $\frac{\alpha_1}{0.8844z}{\FW}_{\rectang}(\sqrt{3z},\sqrt{z/3})$ as the upper bound. However, if $\alpha_1$ is greater than or equal to $\alpha_c$, we consider ${\FW}_{\rectang}(\sqrt{3z},\sqrt{z/3})$ as the upper bound. 
In the case of a rectangle with an aspect ratio of 2, the value of $\alpha_c$ can be calculated as $0.8405z$. Then for $\alpha_2 \leq 0.8405z$, we consider $\frac{\alpha_2}{0.8405z}{\FW}_{\rectang}(\sqrt{2z},\sqrt{z/2})$ as the upper bound and for $\alpha_2 > 0.8405z$ we consider ${\FW}_{\rectang}(\sqrt{2z},\sqrt{z/2})$. Therefore, the upper bound is
\[
\resizebox{\textwidth}{!}{%
$\Phi_{UB} = \min\left\{\frac{\alpha_1}{0.8844z}{\FW}_{\rectang}(\sqrt{3z},\sqrt{\dfrac{z}{3}}), {\FW}_{\rectang}(\sqrt{3z},\sqrt{\dfrac{z}{3}})\right\}+ (k-1) \cdot \min\left\{\frac{\alpha_2}{0.8405z}{\FW}_{\rectang}(\sqrt{2z},\sqrt{\dfrac{z}{2}}), {\FW}_{\rectang}(\sqrt{2z},\sqrt{\dfrac{z}{2}})\right\}$
}
\]
For the lower bound we use the first case in (\ref{eq:LowerBound}) that is $\Phi_{LB}=\frac{2}{3}\pi r^3$. 
Recall that in order to determine the lower bound, we must first have knowledge of the polygon's area and divide it to $k$ to find the value of $\alpha$. Then we equate $\alpha$ to $\pi r^2$ to solve for $r$ and finally calculate the lower bound. In this context, 
\[
\alpha = \frac{\alpha_1+(k-1)\alpha_2}{k},
\]
and it is easy to see that if $k$ increases, the effect of the sub-rectangle with aspect ratio of 3 decreases. So, the worst case happens for $k=3$. 
Computational analysis shows that in this situation, the maximum approximation factor happens for $\alpha_1 = 0.8844z$ and $\alpha_2 = \dfrac{A-\alpha_1}{2} = 
 $ and its value is 1.9614 (i.e., $\rho \leq 1.9614$). Figure \ref{fig:k_eq_3} shows the maximum approximation factor $\rho$ for values of $\alpha_1$. \\

\begin{figure}[hbt]
\centering
  \includegraphics[scale=0.3]{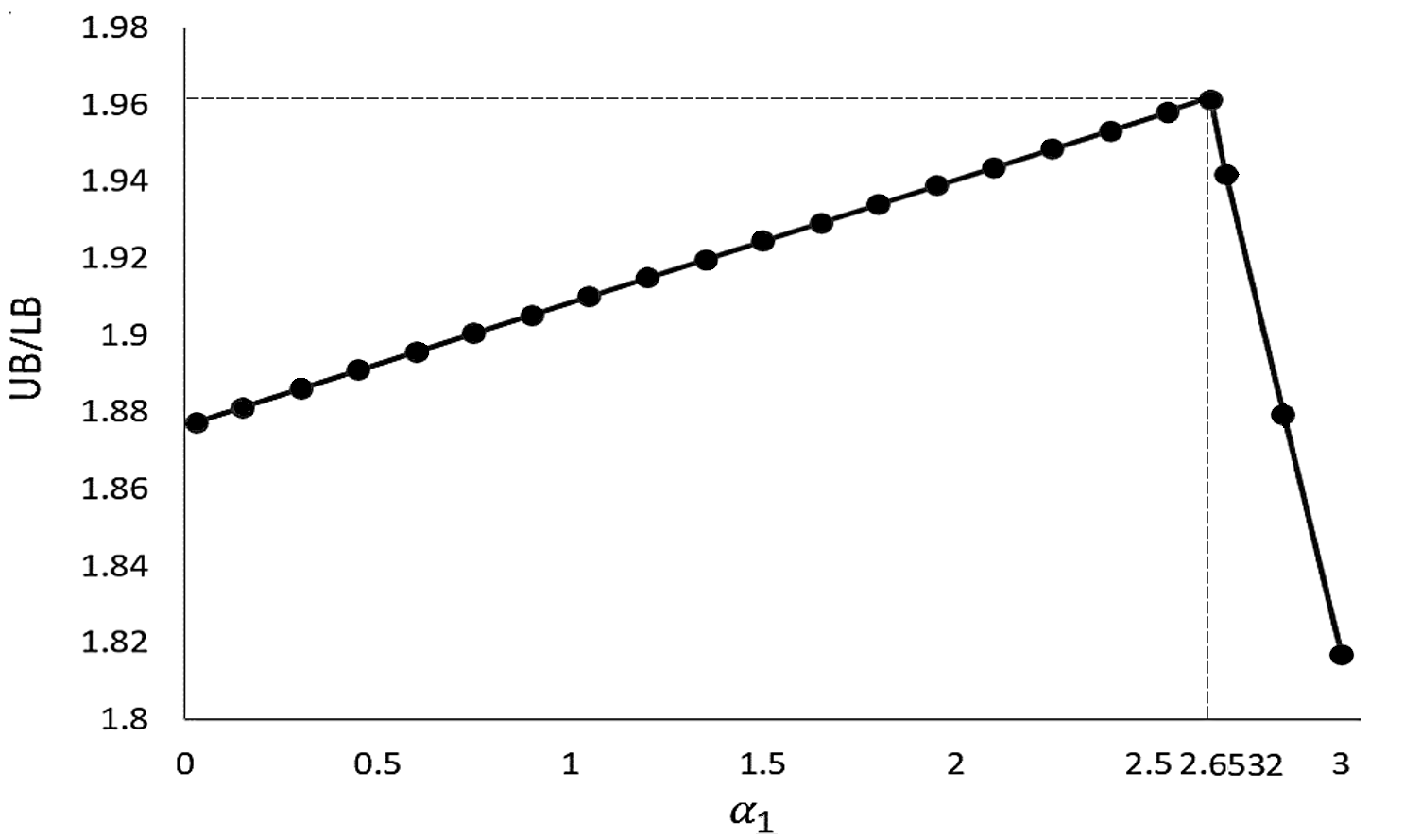}
    \protect\caption{\label{fig:k_eq_3} \footnotesize The approximation factor of Algorithm \ref{alg:ConstructFW} when $k=3$ and $\alpha_2 = \frac{(k/2)z-\alpha_1}{k-1} = \frac{1.5z-\alpha_1}{2}$.}
\end{figure}

\subsubsection{Approximation Factor of Algorithm \ref{alg:SubdivideFW}}
\label{subsubsec:ApxFactorSubdivideAlg}
To find the approximation factor of this algorithm, we investigate several cases that are summarized in \cref{tab:SubdivisionAlgCases}. The last column shows the maximum approximation ratio for each case.

\begin{table}[h]
\vspace{8pt}
\caption{\label{tab:SubdivisionAlgCases} Different investigated cases for finding the approximation factor of Algorithm \ref{alg:SubdivideFW}.}
\begin{center}
\begin{tabular}{ccc}
 \toprule
~~~~ \textbf{Case} ~~~~~~~~&~~~~~~~~ \textbf{Condition} ~~~~~~~~ &~~~~~~~~ $\bm{\rho_{\max}}$ ~~~~~~~~\\ \hline
 CASE 1 &   $w/k\leq 1/9$ & 1.8530 \\    \hline
 CASE 2 &   $1/9\leq w/k < 1/8$ & 1.8446 \\  \hline
 CASE 3 &   $1/8\leq w/k < 2/9$ & 1.8765 \\  \hline
 CASE 4 &   $2/9\leq w/k \leq 1/4$ & 1.8408 \\  \hline
 CASE 5 &   $1/4 < w/k < 0.5$ & 1.9614 \\  \hline 
 CASE 6 &   $w/k \geq 0.5$ & 2.002 \\
\bottomrule 
\vspace{-30pt}
\end{tabular}
\end{center}
\end{table}

~\\
\textbf{CASE 1 $\bm{(w/k\leq 1/9)}$:}
In this case, we have $q_0 =\lfloor{\sqrt{hk/w}}\rfloor = \lfloor{\sqrt{k/w}}\rfloor \geq 3$. By Lemma \ref{lem:claimi}, we know that at least one of the configurations made with  $p=p_0$ or $p=p_0+1$, has $q=q_0$. The maximum aspect ratio of created sub-rectangles is
\[
\AR=\max\{\frac{w(q+1)^2}{k},\frac{k}{wq^2}\}
\]
Suppose that $q=q_0$. We have:
\[
\frac{w(q_0+1)^2}{k} \leq \frac{w(\sqrt{\dfrac{k}{w}}+1)^2}{k} = (1+\sqrt{\frac{w}{k}})^2 \leq \frac{16}{9}
\]
Moreover, we have:
\[
q_0 = \lfloor{\sqrt{k/w}}\rfloor \Longrightarrow
q_0^2 \leq \frac{k}{w}< (q_0+1)^2 \Longrightarrow 1 \leq \frac{k}{wq_0^2}<\dfrac{(q_0+1)^2}{q_0^2}\Longrightarrow \frac{k}{wq_0^2} < \frac{16}{9}
\]
So, using the maximum aspect ratio $\AR=16/9$ for sub-rectangles in this case and the first lower bound in \eqref{eq:LowerBound}, we will have $\alpha_c = 0.8279z$ and the maximum approximation factor is for $\alpha = z/2$ with value 1.8530.

~\\
\textbf{CASE 2 $\bm{(1/9\leq w/k < 1/8)}$:} 
In this case, we have $q_0=2$. One of the configurations that our algorithm generates, is achieved by setting $q=q_0+1=3$ and $p=\lfloor\frac{k}{3}\rfloor$. If this configuration is made of two grids, its worst-case aspect ratio is
\[
\AR=\max\{\frac{wk}{p^2},\frac{(p+1)^2}{wk}\}
\]
We investigate this case depending on different values of $\bmod(k,q=3)$.
\begin{itemize}
    \item $\bm{\bmod(k,3)=0}$
\[
\bmod(k,3)=0 \Longrightarrow p=\frac{k}{3}\xrightarrow{s=0} \AR=\max\{\frac{wk}{k^2/9},\frac{k^2/9}{wk}\}=\max\{\frac{9w}{k},\frac{k}{9w}\} \leq 1.125
\]
Note that in this case we have $s=0$ and $\ell=0$ and Algorithm \ref{alg:GridPartitionAlg} generates only one grid. For this case, $\alpha_c =0.7836z$ and the maximum approximation factor is for $\alpha = z/2$ with value 1.8408.
    
     \item $\bm{\bmod(k,3)=1}$
\[
\bmod(k,3)=1 \Longrightarrow p=\frac{k-1}{3}\Longrightarrow \AR=\max\{\frac{9kw}{(k-1)^2},\frac{(k+2)^2}{9kw}\}
\]
     Since $w\geq 1$ and $w/k<1/8$, we have $k \geq 10$. Using this inequality, we will have $AR\leq 1.44$. Hence, $\alpha_c = 0.8064z$ and the maximum approximation factor is for $\alpha = z/2$ with value 1.8316.

      \item $\bm{\bmod(k,3)=2}$
\[
\bmod(k,3)=2 \Longrightarrow p=\frac{k-2}{3}\Longrightarrow \AR=\max\{\frac{9kw}{(k-2)^2},\frac{(k+1)^2}{9kw}\}
\]
     Since $w\geq 1$ and $w/k<1/8$, we would have $k \geq 11$. Using this inequality, we will have $\AR \leq 1.68$. Hence, $\alpha_c = 0.8220z$ and the maximum approximation factor is for $\alpha = z/2$ with value 1.8446. 

\end{itemize}  
  
~\\ 
\textbf{CASE 3 $\bm{(1/8\leq w/k\leq 2/9)}$:} 
In this case, we have $q_0 = 2$. Again by Lemma \ref{lem:claimi}, we know that at least one of the configurations made with $p=p_0$ or $p=p_0+1$, has $q=q_0$. The maximum aspect ratio of sub-rectangles in that configuration is
\[
\AR=\max\{\frac{w(q_0+1)^2}{k},\frac{k}{wq_0^2}\},
\]
where
\[
9/8 \leq \frac{w(q_0+1)^2}{k}=\frac{9w}{k} \leq 2\,, \qquad \qquad \mbox{ and } \qquad \qquad 9/8 \leq \frac{k}{wq_0^2}=\frac{k}{4w} \leq 2. 
\]
So, using maximum aspect ratio $\AR=2$ for sub-rectangles, we will have $\alpha_c = 0.8406z$ and the maximum approximation factor is for $\alpha = z/2$ with value 1.8765. 

~\\ 
\textbf{CASE 4 $\bm{(2/9 < w/k\leq 1/4)}$:}
In this case, we have $q_0=2$. One of the configurations that our algorithm generates, is achieved by setting $q=q_0$ and $p=\lfloor\frac{k}{2}\rfloor$. We investigate this case based on different values of $\bmod(k,q)$.
\begin{itemize}
\item $\bm{\bmod(k,2)=0}$
\[
p=\frac{k}{2}\xrightarrow{s=0}\AR=\max\{\frac{kw}{p^2},\frac{p^2}{kw}\} = \max\{\frac{4w}{k},\frac{k}{4w}\} \leq 1.125
\]
Hence, $\alpha_c =0.7836z$ and the maximum approximation factor is for $\alpha = z/2$ with value 1.8408.
     
\item $\bm{\bmod(k,2)=1}$ ~\\
\[
\resizebox{0.95\textwidth}{!}{%
$p=\frac{k-1}{2}\Longrightarrow \AR=\max\{\frac{(p+1)^2}{kw},\frac{kw}{p^2}\}=\max\{\frac{(k+1)^2}{4kw},\frac{4kw}{(k-1)^2}\}\leq \max\{\frac{9(k+1)^2}{8k^2},\frac{k^2}{(k-1)^2}\}$
}
\]
Since $w \geq 1$ and $w/k < 1/4$, we have $ k \geq 5$ and as a result the maximum aspect ratio is 1.62.
Having this aspect ratio, $\alpha_c =0.8182z$ and the maximum approximation factor is for $\alpha = z/2$ with value 1.8403.
\end{itemize}

~\\ 
\textbf{CASE 5 $\bm{(1/4 < w/k < 0.5)}$:}
In this case we have $q_0=1$. We investigate this case based on different values of $\bmod(k,q)$.
\begin{itemize}
\item $\bm{\bmod(k,2)=0}$\\
One of the configurations that our algorithm makes in this case, is obtained by setting $q=q_0+1=2$ and $p=\lfloor\frac{k}{2}\rfloor=\frac{k}{2}$. Since $s=0$, we have
\[
\AR=\max\{\frac{kw}{p^2},\frac{p^2}{kw}\}=\max\{\dfrac{4w}{k},\frac{k}{4w}\} \leq 2
\]
In this case, using maximum aspect ratio of 2 for sub-rectangles, we will have $\alpha_c = 0.8406z$ and the maximum
approximation factor is for $\alpha = z/2$ with value 1.8765.

\item $\bm{\bmod(k,2)=1}$\\
For this case, we consider three intervals.
\begin{itemize}
\item $\bm{(1/4 < w/k\leq 8/25)}$\\
In this case, one of the configurations that our algorithm makes, is obtained by setting $q=q_0+1=2$ and $p=\lfloor\frac{k}{2}\rfloor=\frac{k-1}{2}$. The maximum aspect ratio of sub-rectangles in that configuration is
\[
\AR=\max\{\frac{kw}{p^2},\frac{(p+1)^2}{kw}\}=\max\{\frac{4kw}{(k-1)^2},\frac{(k+1)^2}{4kw}\} \leq \max\{\frac{1.28k^2}{(k-1)^2},\frac{(k+1)^2}{2k^2}\} \leq 2\,,
\]
where the upper bound comes from the fact that $w\geq 1$ and $w/k \leq 0.32$,  which gives $k \geq 5$. Therefore, the worst case of approximation factor is the same as previous case which is 1.8765.

\item $\bm{(8/25 < w/k\leq 18/49)}$\\
In this case, using the fact that $w\geq 1$ and $w/k\leq 18/49$, we can observe that $k \geq 3$, which means we have either $k=3$ or $k \geq 5$.

For $k=3$, $w \geq 1$ and $w/3 \leq 18/49$, we have $w \leq 54/49$. As a result, $p_0=\lfloor\sqrt{wk/h}\rfloor=\lfloor\sqrt{3w}\rfloor=1$. So, we have a configuration in which $p=p_0+1=2, \; q=1$ and $s=1$. This configuration makes $(p-s)q=1$ rectangle with $\AR=\max\{\frac{3}{w},\frac{w}{3}\}\leq 3$ and $(q+1)s=2$ rectangles with $\AR=\max\{\frac{4w}{3},\frac{3}{4w}\}\leq 72/49$. Recall the case that we had 2 rectangles with aspect ratio less than or equal to 2 and one rectangle with aspect ratio less than or equal to 3 that we investigated in the analysis of Algorithm \ref{alg:ConstructFW}. In that case, we showed that the maximum approximation factor is 1.9614. Here, the aspect ratio of our rectangles are less than or equal to that case. Hence, we can claim that the approximation factor is less than 1.9614.

For $k\geq 5$, the maximum aspect ratio of the generated sub-rectangles is
\[
\AR=\max\{\frac{kw}{p^2},\frac{(p+1)^2}{kw}\}=\max\{\frac{4kw}{(k-1)^2},\frac{(k+1)^2}{4kw}\} < 2.296
\]
This gives $\alpha_c = 0.8555z$ and hence $\rho \leq 1.9145$.

\item $\bm{(18/49 < w/k < 0.5)}$\\
In this case, we know that for one of the configurations where we set $p=p_0$ or $p=p_0+1$, we get $q=q_0$. Then, the aspect ratio of rectangles is less than $AR \leq \max \{\frac{4w}{k}, \frac{k}{w}\}$ which gives us the maximum of 49/18. In this case we get $\alpha_c= 0.8741z$ and $\rho \leq 1.9770$.

\end{itemize}

\end{itemize}

~\\ 
\textbf{CASE 6 $\bm{(w/k \geq 0.5)}$:}
In this case, by Lemma \ref{lem:SubdivisionAlgSliceCase}, our algorithm divides the bounding box into $k$ rectangles each with dimensions $\frac{w}{k} \times 1$. Hence, this case is identical to cases I and II of analysis of Algorithm \ref{alg:ConstructFW}, for which we showed $\rho \leq 2.002$.

\section{Computational Results}
As discussed above, both of our algorithms have very good worst-case performance. However, it is still interesting to investigate their average performance. In order to compare our algorithms and see their average performance, we present in this section a simple computational experiment using the convex hull of maps of Brookline, MA (Figure \ref{fig:Brookline-rand}) and the state of Massachusetts (Figure \ref{fig:MA-rand}) as our initial polygon. We have partitioned the polygon for a number of medians from 3 to 150 and for the cases when we have several medians out of the polygon and we randomly place them inside the polygon. We have run each case three times and use the average of these runs in our computational results. Figure \ref{fig:random-perf-Brookline_MA} shows the ratio of Fermat-Weber value of our solution using ${\sf ConstructFW}$ and ${\sf SubdivideFW}$ Algorithms to the lower bound.
\begin{figure}[htb]
    \centering
    \includegraphics[scale=0.9]{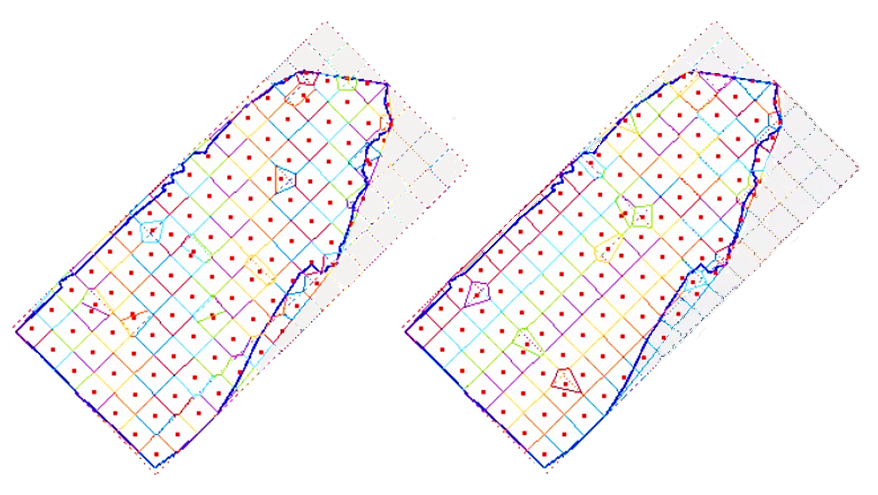}
    \protect\caption{\label{fig:Brookline-rand} The map of Brookline partitioned into 111 sub-regions using ${\sf SubdivideFW}$ Algorithm on the left and ${\sf ConstructFW}$ Algorithm on the right. The median points corresponding to the center of the smaller rectangles which are completely outside of the polygon are placed randomly in the polygon.}
\end{figure}
\begin{figure}[htb]
\captionsetup{farskip=0pt}
    \begin{centering}
    \includegraphics[scale=0.8]{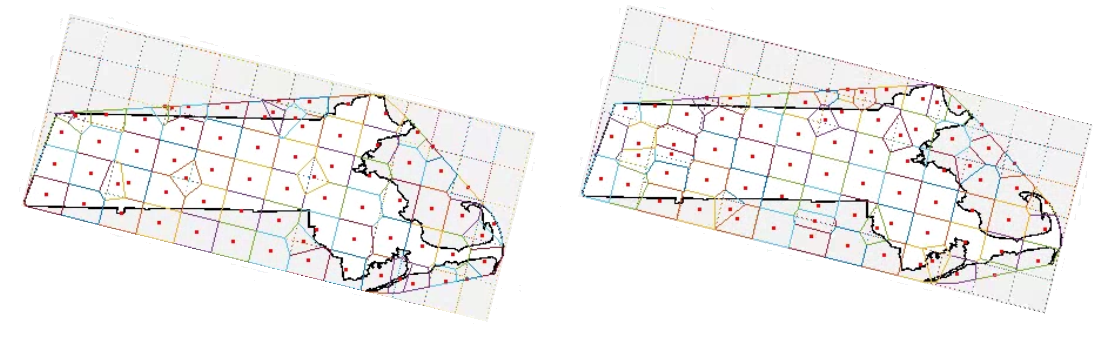} 
    \protect\caption{\label{fig:MA-rand} The map of Massachusetts partitioned into 76 parts using ${\sf SubdivideFW}$ Algorithm on the left and ${\sf ConstructFW}$ Algorithm on the right. The median points corresponding to the center of the smaller rectangles which are completely outside of the polygon are placed randomly in the polygon.}
    \end{centering}
\end{figure}
\begin{figure}[htb]
    \centering
    \includegraphics[scale=0.8]{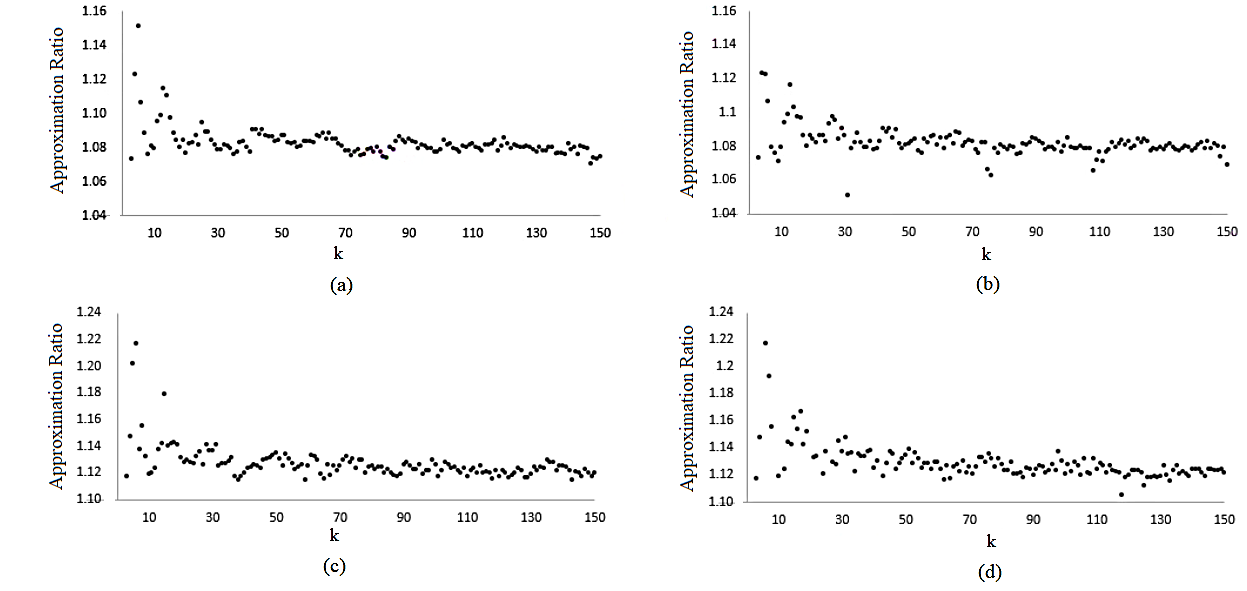}
    \caption{\footnotesize \label{fig:random-perf-Brookline_MA} The approximation factor trend with respect to the number of median points ($k$). Here, in approximation factor calculation, we use actual Fermat-Weber value of our solutions over the lower bound. Figures (a) and (b) show this ratio for ${\sf SubdivideFW}$ and ${\sf ConstructFW}$ Algorithms, respectively, on the map of Brookline, and Figures (c) and (d) show the same for the map of Massachusetts.}  
\end{figure}

\subsection{Modification of the Algorithm}
\label{sec:modification}
The computational results show a very satisfactory average performance. However, we could still do better by applying a small modification in our algorithms. In our algorithms we place the median points corresponding to those sub-rectangles that are completely outside of the polygon, randomly inside the polygon. Doing so we will only make the Fermat-Weber value better. In \cref{sec:GeneralFramework} we mentioned that the way we do the placement for such points has a minor impact on the performance of the algorithm. In this section we will try to do a bit better than random.

For the ${\sf ConstructFW}$ Algorithm, we find the sub-rectangles made by the algorithm that are fully in the polygon. For each sub-rectangle that is completely outside of the polygon, we choose a strip and then add a new median point to the rectangles fully in that strip. Assume that there are $i$ sub-rectangles in the strip that are completely inside the polygon. So, the length or width of those $i$ sub-rectangles decreases in a way that $i+1$ rectangles fit. Then the $i+1$ median points will be the center of these new sub-rectangles. The area of all new sub-rectangles would be $i/k(i+1)$ and the center of rectangles have equal distance. To select a strip amongst all strips we have for placing one new median, we use a measure for each strip and then select the strip with maximum measure value. The measure is 
\[
\frac{(CAR-NAR)\times CNR}{ONR},
\]
where CAR is the current aspect ratio of the sub-rectangles in the strip, NAR is the new aspect ratio of the sub-rectangles by adding one new median point, CNR is the current number of sub-rectangles in the strip, and ONR is the original number of sub-rectangles in the strip before adding any median. After adding one median, CAR, NAR and CNR for selected strip will be updated and the process will be repeated until all outside median points are placed in a strip inside the polygon.

We used the same measure and method for the ${\sf SubdivideFW}$ Algorithm. The only difference was that based on the direction of dividing the bounding box of the polygon into two sections, we use rows or columns that are generated, instead of strips. For example if we have divided the box with a vertical (horizontal) line into two parts, we consider columns (rows) instead of the strips.
Figures \ref{fig:Brookline-kcrow-sqstrip} and \ref{fig:MA-kcrowstrip} show the results of using this modifcation on maps of Brookline and Massachusetts.

Applying this modification and running the algorithms for Brookline and Massachusetts maps, the ratio of Fermat-Weber value over the lower bound decreases. Figure~\ref{fig:Ratios-random-striprow} shows the ratio of the Fermat-Weber value over the lower bound  when the outer medians points are placed randomly ($x$-axis) and when they are placed using the modified point placement scheme ($y$-axis) on both maps.
One important parameter affecting the performance improvement is the number of medians that are initially outside of the polygon. The more we have the outer median points, the more potential there is for improvement by this modification. The weighted average improvement of applying this modification on the ${\sf SubdivideFW}$ Algorithm on Brookline and Massachusetts map is 2.14\% and 3.45\%, respectively. This value is 2.06\% and 3.64\% for the ${\sf ConstructFW}$ Algorithm.
\begin{figure}[!ht]
    \centering
    \includegraphics[scale=1]{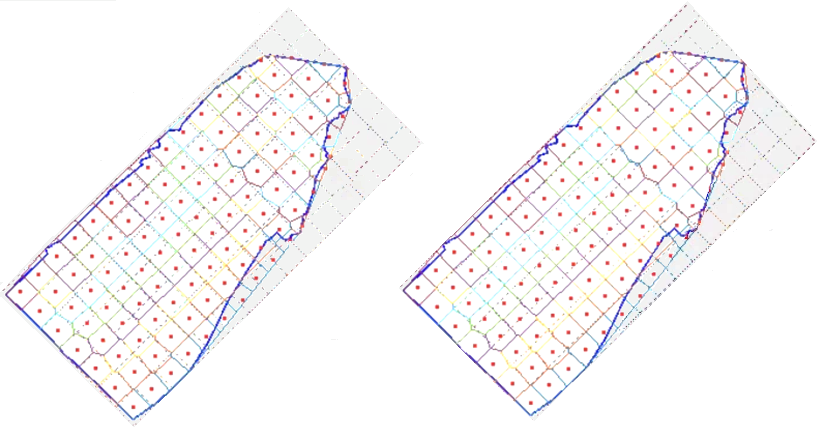}    
    \protect\caption{\label{fig:Brookline-kcrow-sqstrip} The map of Brookline partitioned into 111 parts using ${\sf SubdivideFW}$ Algorithm on the left and ${\sf ConstructFW}$ Algorithm on the right, where the median points corresponding to the center of the sub-rectangles which are completely outside of the polygon are placed in the polygon using the modified point placement scheme.}  
\end{figure}
\begin{figure}[!ht]
\captionsetup{farskip=0pt}
    \begin{centering}
    \includegraphics[scale=.8]{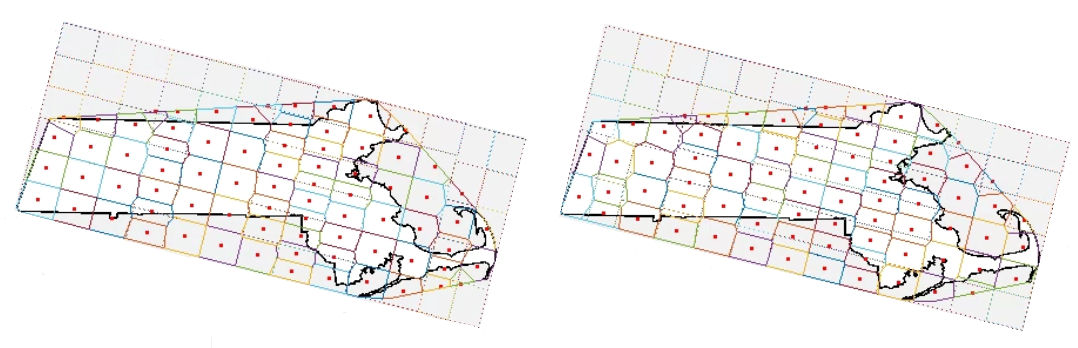}    
    \protect\caption{\label{fig:MA-kcrowstrip}The map of Massachusetts partitioned into 76 parts using ${\sf SubdivideFW}$ Algorithm on the left and ${\sf ConstructFW}$ Algorithm on the right, where the median points corresponding to the center of the sub-rectangles which are completely outside of the polygon are placed in the polygon using the modified point placement scheme.}
    \end{centering}
\end{figure}
\begin{figure}[htb]
    \centering
    \includegraphics[scale=.8]{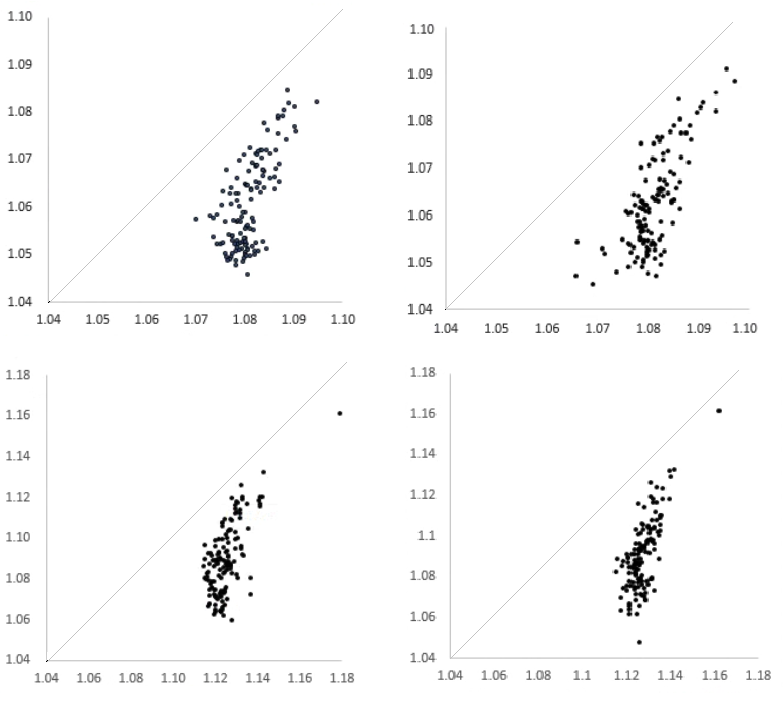}
    \protect\caption{\label{fig:Ratios-random-striprow} The ratio of Fermat-Weber value over the lower bounds when the outer median points are placed randomly ($x$-axis) and when they are placed using the the modified point placement scheme ($y$-axis). The upper figures refer to the Brookline's map and the lower refer to Massachusetts' map. The left figures show the ratios on ${\sf SubdivideFW}$ Algorithm and the right figures show it for the ${\sf ConstructFW}$ algorithm.}
    
\end{figure}

\section{Conclusion}
\label{sec:conclusion}
We have presented two approximation algorithms for finding the location of $k$ median points in a convex polygon $C$ with $n$ vertices. Our algorithms partition $C$ into $k$ sub-regions and then take the center of each sub-region as a median point. Both algorithms have a computational complexity of $\mathcal{O}(n + k + k \log n)$. This fact together with the worst-case performance analysis and the average performance results show that our algorithms are capable of finding high quality solutions efficiently. Potential directions for future research could be considering $C$ to be a polygon with obstacles (holes) or a general convex region and application of the presented algorithms to the similar location optimization problems. 

\bibliographystyle{unsrt}
\bibliography{references}

\begin{thebibliography}{10}

\bibitem{1}
Christos {H. Papadimitriou}.
\newblock Worst-case and probabilistic analysis of a geometric location
  problem.
\newblock {\em SIAM Journal on Computing}, 1981.

\bibitem{Arora}
Sanjeev Arora, Prabhakar Raghavan, and Satish Rae.
\newblock Approximation schemes for euclidean k-medians and related.
\newblock {\em Proceedings of the thirtieth annual ACM symposium on Theory of
  computing}, pages 106--113, 1998.

\bibitem{Bartal1996randomized}
Yair Bartal.
\newblock Probabilistic approximation of metric spaces and its algorithmic
  applications.
\newblock {\em Proceedings of 37th Conference on Foundations of Computer
  Science}, pages 184--193, 1996.

\bibitem{kariv1979algorithmic}
O~Kariv and SL~Hakimi.
\newblock An algorithmic approach to network location problems. {II}: The
  p-medians.
\newblock {\em SIAM Journal on Applied Mathematics}, 37(3):539--560, 1979.

\bibitem{tamir1996pn2}
Arie Tamir.
\newblock An o (pn2) algorithm for the p-median and related problems on tree
  graphs.
\newblock {\em Operations research letters}, 19(2):59--64, 1996.

\bibitem{bartal1998approximating}
Yair Bartal.
\newblock On approximating arbitrary metrices by tree metrics.
\newblock In {\em Proceedings of the thirtieth annual ACM symposium on Theory
  of computing}, pages 161--168, 1998.

\bibitem{Charikar1}
Moses Charikar, Chandra CHekur, Ashish Goel, and Sudipto Guha.
\newblock Improved combinatorial algorithms for the facility location and
  k-median problems.
\newblock {\em Proceedings of the 30th Annual ACM Symposium on Theory of
  Computing}, pages 114--123, 1998.

\bibitem{Charikar}
Moses Charikar and Sudipto Guha.
\newblock A constant-factor approximation algorithm for the k-median problem.
\newblock {\em Journal of Computer and System Sciences}, 65.1:129--149, 2002.

\bibitem{Jane1}
Vijay V.~Vazirani Kamal~Jain.
\newblock Approximation algorithms for metric facility location and k-median
  problems using the primal-dual schema and lagrangian relaxation.
\newblock {\em Journal of the ACM}, 48:274--296, 2001.

\bibitem{Jane2}
Amin~Saberi Kamal~Jain, Mohammad~Mahdian.
\newblock A new greedy approach for facility location problems.
\newblock {\em In Proceedings of the thiry-fourth annual ACM symposium on
  Theory of computing}, 48:731--740, 2002.

\bibitem{Li}
Ola~Svensson Shi~Li.
\newblock Approximating k-median via pseudo-approximation.
\newblock {\em SIAM Journal on Computing}, 45, 2013.

\bibitem{byrka2017improved}
Jaros{\l}aw Byrka, Thomas Pensyl, Bartosz Rybicki, Aravind Srinivasan, and Khoa
  Trinh.
\newblock An improved approximation for k-median and positive correlation in
  budgeted optimization.
\newblock {\em ACM Transactions on Algorithms (TALG)}, 13(2):1--31, 2017.

\bibitem{Fekete}
S\'{a}ndor {P Fekete}, Joseph {S B. Mitchell}, and Karin Weinbrecht.
\newblock On the continuous weber and k -median problems.
\newblock {\em Proceedings of the sixteenth annual symposium on Computational
  geometry}, pages 70--79, 2000.

\bibitem{Carlsson2014kmedian}
John~Gunnar Carlsson, Fan Jia, and Ying Li.
\newblock An approximation algorithm for the continuous k-medians problem in a
  convex polygon.
\newblock {\em INFORMS Journal on Computing}, 26:280--289, 2014.

\bibitem{carmi2005fermat}
Paz Carmi, Sariel Har-Peled, and Matthew~J Katz.
\newblock On the fermat--weber center of a convex object.
\newblock {\em Computational Geometry}, 32(3):188--195, 2005.

\bibitem{AbuAffash}
A.~Karim Abu-Affash and Matthew~J. Katz.
\newblock Improved bounds on the average distance to the fermat--weber center
  of a convex object.
\newblock {\em Inf. Process. Lett.}, 109(6):329--333, February 2009.

\bibitem{DumitrescuFWBound}
Adrian Dumitrescu and Csaba~D. T\'{o}th.
\newblock New bounds on the average distance from the fermat-weber center of a
  planar convex body.
\newblock In {\em Proceedings of the 20th International Symposium on Algorithms
  and Computation}, ISAAC '09, pages 132--141, Berlin, Heidelberg, 2009.
  Springer-Verlag.

\bibitem{aronov2009minimum}
Boris Aronov, Paz Carmi, and Matthew~J Katz.
\newblock Minimum-cost load-balancing partitions.
\newblock {\em Algorithmica}, 54:318--336, 2009.

\bibitem{Squarified}
Mark Bruls, Kees Huizing, and Jarke {J. van Wijk}.
\newblock Squarified treemaps.
\newblock {\em Data Visualization 2000}, pages 33--42, 2000.

\bibitem{PreparataShamos1985}
Franco~P. Preparata and Michael~I. Shamos.
\newblock {\em Computational geometry: an introduction}.
\newblock Springer-Verlag New York, Inc., New York, NY, USA, 1985.

\bibitem{kcenter}
John~Gunnar Carlsson, Mehdi Behroozi, and Xiang Li.
\newblock Geometric partitioning and robust ad-hoc network design.
\newblock {\em Annals of Operations Research}, 238:41--68, 2016.

\bibitem{minCostLoadBalancing}
B.~Aronov, P.~Carmi, and M.J. Katz.
\newblock Minimum-cost load-balancing partitions.
\newblock {\em Algorithmica}, 54(3):318--336, July 2009.

\end{thebibliography}

\pagebreak{}

\appendix

\part*{Online supplement to ``Approximating Median Points in a Convex Polygon''}

\section{Fermat Weber Value of Disk, Rectangle and Triangle}
\label{sec:FWCommonObjects}
\begin{lem}
\label{lem:trivial-disk}For a disk $D$ with radius $r$ and using $L_2$ norm
\[
\FW\left(D\right)=\frac{2\pi r^{3}}{3}\,.
\]
Also, for an angle $\theta\in\left[0,2\pi\right]$, if $S$ denotes
the circular sector of $D$ subtended by $\theta$, then
\[
\FW\left(S,c\right)=\frac{\theta}{3}r^{3}=\frac{2\cdot\Area\left(S\right)}{3}r
\]
where $c$ denotes the center of the disk of which $S$ is a sector.\end{lem}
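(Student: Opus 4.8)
The plan is to reduce everything to a single computation in polar coordinates, after first pinning down the Fermat-Weber center of the disk by a symmetry argument. First I would observe that the map $p \mapsto \FW(D,p)=\iint_D \|x-p\|\,dA$ is convex, being an integral of the convex functions $p\mapsto\|x-p\|$. Because $D$ is invariant under every rotation about its center $c$, the value $\FW(D,p)$ depends only on the distance from $p$ to $c$. A convex function invariant under this rotation group must attain its minimum at $c$ itself: $c$ is the midpoint of any minimizer $p^*$ and its antipode $2c-p^*$, and both are minimizers by symmetry, so convexity forces $\FW(D,c)\le \tfrac12\big(\FW(D,p^*)+\FW(D,2c-p^*)\big)=\min_p \FW(D,p)$. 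Hence $\FW(D)=\FW(D,c)$, and it suffices to evaluate the integral with $p=c$.

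Placing $c$ at the origin and passing to polar coordinates $x_1=\rho\cos\phi$, $x_2=\rho\sin\phi$, we have $\|x\|=\rho$ and $dA=\rho\,d\rho\,d\phi$, so
\[
\FW(D,c)=\int_0^{2\pi}\!\!\int_0^r \rho^2\,d\rho\,d\phi=\int_0^{2\pi}\frac{r^3}{3}\,d\phi=\frac{2\pi r^3}{3},
\]
which gives the first formula.

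For the circular sector $S$ subtended by angle $\theta$, the only change is that the angular variable ranges over an interval of length $\theta$ rather than $[0,2\pi]$; the radial integral is untouched, so
\[
\FW(S,c)=\int_0^{\theta}\!\!\int_0^r \rho^2\,d\rho\,d\phi=\frac{\theta}{3}r^3.
\]
Finally I would substitute the elementary identity $\Area(S)=\tfrac12\theta r^2$ to rewrite $\tfrac{\theta}{3}r^3=\tfrac{2}{3}\cdot\tfrac12\theta r^2\cdot r=\tfrac{2\,\Area(S)}{3}\,r$, establishing the second equality. As a consistency check, setting $\theta=2\pi$ recovers the disk formula.

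There is no serious obstacle here: the integral is elementary once polar coordinates are introduced. The only step that calls for justification rather than calculation is the claim that the center is the Fermat-Weber minimizer, which the convexity-plus-symmetry argument above settles cleanly.
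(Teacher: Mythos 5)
Your proof is correct and fills in exactly the computation the paper leaves implicit (its proof is simply ``Trivial''): the polar-coordinates evaluation and the symmetry-plus-convexity argument locating the minimizer at the center are the standard route, and both formulas check out.
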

\begin{proof}
Trivial.\end{proof}

\begin{rem} \label{rem:disk}
It is well-known that, for a fixed area, the disk is the region with minimal Fermat-Weber value $\FW\left(C\right)$. Using \ref{lem:trivial-disk}, this gives us an
easy lower bound:

\[
\FW\left(C\right)\geq\frac{2}{3\sqrt{\pi}}A^{3/2}
\]
where $A$ is the area of $C$.\end{rem}

\begin{lem}
\label{lem:FWRectangle} 
For a rectangle $R$ of height $h$ and width
$w$ using $L_2$ norm,
\begin{equation}
\label{eq:FWRectangleL2}
\FW(R)=\frac{1}{6}hw\sqrt{h^{2}+w^{2}}+\frac{1}{12}w^{3}\log\left(\frac{\sqrt{h^{2}+w^{2}}+h}{w}\right)+\frac{1}{12}h^{3}\log\left(\frac{\sqrt{h^{2}+w^{2}}+w}{h}\right)
\end{equation}
Also, using $L_1$ norm this is
\begin{equation}
\label{eq:FWRectangleL1}
\FW(R)=\frac{1}{4}(w^2h+wh^2),
\end{equation}
Finally, using $L_\infty$ norm and assuming $w\geq h$ this is
\begin{equation}
\label{eq:FWRectangleLinfty}
\FW(R)=\frac{1}{4}w^2
\end{equation}
\end{lem}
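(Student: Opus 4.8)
The plan is to evaluate each integral directly at the Fermat-Weber point, which for all three norms is the center of $R$. Since each integrand $\|x-p\|$ is convex in $p$, the map $p\mapsto\FW(R,p)$ is convex; moreover it is invariant under reflection of $p$ through the center $x_0$ of $R$ (because $R$ is centrally symmetric about $x_0$), so its minimum is attained at $x_0$ and thus $\FW(R)=\FW(R,x_0)$. I would place the origin at $x_0$, so that $R=[-\tfrac{w}{2},\tfrac{w}{2}]\times[-\tfrac{h}{2},\tfrac{h}{2}]$, and exploit the four-fold symmetry to reduce to $\FW(R)=4\int_0^{w/2}\int_0^{h/2}\|(x,y)\|\,dy\,dx$ in each case.

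For the $L_1$ norm the integrand $|x|+|y|$ separates, and each term is elementary: $\int_0^{w/2}\int_0^{h/2}x\,dy\,dx=\tfrac{h}{2}\cdot\tfrac{w^2}{8}$, and symmetrically for $y$, so after multiplying by $4$ one obtains $\tfrac14(w^2h+wh^2)$, which is \eqref{eq:FWRectangleL1}. For the $L_\infty$ norm the integrand on the first quadrant is $\max(x,y)$; since $w\ge h$ one splits the region along the diagonal $y=x$, noting that for $x\in[\tfrac{h}{2},\tfrac{w}{2}]$ one always has $x\ge y$. Evaluating the three resulting elementary pieces yields $\FW(R)=\tfrac14 w^2h+\tfrac{1}{12}h^3$. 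Under the normalization $h=1$ used throughout the skinny-rectangle regime ($z>1000$), the dominant term is $\tfrac14 w^2$ as in \eqref{eq:FWRectangleLinfty}; the lower-order constant $\tfrac{1}{12}$ only strengthens the true value over the stated lower bound and is harmlessly dropped there.

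The substantive computation is the $L_2$ case. I would integrate first in $y$ using the standard antiderivative
\[
\int\sqrt{x^2+y^2}\,dy=\frac{y}{2}\sqrt{x^2+y^2}+\frac{x^2}{2}\ln\!\left(y+\sqrt{x^2+y^2}\right),
\]
evaluate from $0$ to $\tfrac{h}{2}$, and then integrate the result in $x$ from $0$ to $\tfrac{w}{2}$. The remaining $x$-integrals reduce to the standard forms $\int\sqrt{x^2+a^2}\,dx$ and $\int x^2\ln(\,\cdot\,)\,dx$, the latter handled by integration by parts. After multiplying by $4$, the boundary terms collapse into the symmetric closed form \eqref{eq:FWRectangleL2}.

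The main obstacle is precisely this final simplification in the $L_2$ case: the integration by parts on the logarithmic term produces several rational and radical pieces that must be combined, and the various logarithms recombined into the two symmetric forms $\ln\frac{\sqrt{h^2+w^2}+h}{w}$ and $\ln\frac{\sqrt{h^2+w^2}+w}{h}$. A useful consistency check is the square case $w=h=s$, where the formula collapses to $\tfrac{s^3}{6}\bigl(\sqrt{2}+\ln(1+\sqrt{2})\bigr)\approx 0.3826\,s^3$, matching the known mean-distance-from-center constant for a square and confirming the algebra.
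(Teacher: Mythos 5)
Your proposal is correct and follows essentially the same route as the paper, which simply evaluates $\int_{-h/2}^{h/2}\int_{-w/2}^{w/2}\|x\|_p\,dx_1\,dx_2$ analytically for $p=2,1,\infty$; you add the (omitted but worthwhile) justification via convexity and central symmetry that the center is indeed the Fermat--Weber point. Your observation that the exact $L_\infty$ value is $\tfrac14 w^2h+\tfrac1{12}h^3$ rather than the stated $\tfrac14 w^2$ is accurate --- the lemma's third formula is really the leading term under the normalization $h=1$ --- and you correctly note that dropping the $\tfrac1{12}h^3$ term is harmless where the paper uses it as a lower bound.
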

\begin{proof}
The above result can be obtained by evaluating the following integral
analytically:
\[
\FW(R)=\int_{\frac{-h}{2}}^{\frac{h}{2}}\int_{\frac{-w}{2}}^{\frac{w}{2}} \norm{\bm{x}}_p \, dx_{1}\, dx_{2},
\]
for $p=2,1,\infty$, respectively.
\end{proof}
\begin{lem}
\label{lem:FWTriangle} For a right triangle $T=\triangle ABC$ whose
right angle is located at the point $C$, with sides $AB=c$, $BC=a$
and $CA=b$, the Fermat-Weber value of the triangle with respect to
the point $B$ is given by
\begin{equation}
\FW(ABC)=\frac{1}{6}abc+\frac{1}{6}a^{3}\log\left(\frac{c+b}{a}\right)\label{eq:FWTriangle}
\end{equation}
\end{lem}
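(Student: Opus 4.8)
The plan is to compute $\FW(ABC) = \iint_T \|x - B\|\, dA$ directly by placing the reference vertex $B$ at the origin and switching to polar coordinates, so that the integrand collapses to a single power of the radius. First I would fix a convenient coordinate frame: put $B$ at the origin and align the leg $BC$ with the positive $x$-axis, so that $C = (a,0)$. Since the right angle sits at $C$, the leg $CA$ is vertical, giving $A = (a,b)$; by Pythagoras $c^2 = a^2 + b^2$, and the apex angle at $B$, call it $\phi$, satisfies $\cos\phi = a/c$, $\sin\phi = b/c$, and $\tan\phi = b/a$.

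Next I would rewrite the integral in polar coordinates $(r,\theta)$ centered at $B$. Because the integrand $\|x-B\|$ is exactly $r$ while the area element is $r\,dr\,d\theta$, the double integral becomes $\iint r^2\,dr\,d\theta$. For each fixed $\theta \in [0,\phi]$ the ray from the origin enters the triangle at $r=0$ and leaves when it meets the side $CA$, i.e. the vertical line $x = a$; in polar form this boundary is $r = a\sec\theta$. Performing the inner $r$-integration then gives
\[
\FW(ABC) = \int_0^\phi \int_0^{a\sec\theta} r^2\,dr\,d\theta = \frac{a^3}{3}\int_0^\phi \sec^3\theta\,d\theta.
\]

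The only nontrivial step is the evaluation of $\int \sec^3\theta\,d\theta$, which I would handle with the standard reduction (integration by parts) yielding the antiderivative $\tfrac{1}{2}\bigl(\sec\theta\tan\theta + \log\lvert\sec\theta + \tan\theta\rvert\bigr)$. Evaluating from $0$ to $\phi$ annihilates the lower limit, since $\tan 0 = 0$ and $\log 1 = 0$, and leaves
\[
\FW(ABC) = \frac{a^3}{6}\Bigl(\sec\phi\tan\phi + \log(\sec\phi + \tan\phi)\Bigr).
\]

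Finally I would substitute the trigonometric values $\sec\phi = c/a$ and $\tan\phi = b/a$, so that $\sec\phi\tan\phi = bc/a^2$ and $\sec\phi + \tan\phi = (b+c)/a$; this reduces the expression to $\tfrac{1}{6}abc + \tfrac{1}{6}a^3\log\!\bigl(\tfrac{c+b}{a}\bigr)$, which is exactly the claimed formula. The main obstacle is purely the $\sec^3$ antiderivative; everything else is a direct setup-and-substitute computation, and no case analysis is needed, because the right angle at $C$ guarantees the clean radial boundary $r = a\sec\theta$ for every $\theta$ in the angular range.
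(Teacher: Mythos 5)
Your proof is correct, and it is essentially the paper's approach: the paper's proof consists of the single remark that the result ``is again merely analytic integration,'' and your polar-coordinate computation centered at $B$ (with the ray exiting at $r = a\sec\theta$, reducing everything to $\int_0^\phi \sec^3\theta\,d\theta$) is precisely the natural way to carry out that integration. All the details check out, including the substitution $\sec\phi = c/a$, $\tan\phi = b/a$ at the end.
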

\begin{proof}
This is again merely analytic integration.\end{proof}

\newpage
\section{Completing proof of Lemma \ref{lem:UBHalfRectangle}}
\label{sec:sheartransformations}

We begin with Figure \ref{fig:shifted-to-right} and let $B^{'}$ denote the minimum bounding box of $C$. We iteratively
apply Lemma \ref{lem:shearing-convex} to shear the triangular components
of $C$ (all the while increasing the Fermat-Weber value of $C$ relative
to $x_{0}$, the center of $B$) until it takes the form shown in
Figure \ref{fig:two-triangles}, to the point where it remains merely
to shear the two remaining triangular components of $C$ appropriately.
We select one such triangular component arbitrarily and shear it in
a direction that increases the Fermat-Weber value of $C$ relative
to the center $x_{0}$ of $B$ (say, the point marked $x$ in Figure
\ref{fig:introduce-x-shear}), until $x$ either touches the bottom
edge of $B$ or the leftmost edge of $B$. Suppose that $x$ touches
the leftmost edge of $B$ as shown in Figure \ref{fig:x-far-left}
(the case where $x$ touches the bottom edge of $B$ can be addressed
in a similar fashion and we omit it for brevity). It is clear that
we can then shear $x$ \emph{downward} until it touches the corner
of $B$ as shown in Figure \ref{fig:introduce-xp}. This downward
shearing introduces a new vertex, which we call $x^{'}$ (also shown
in Figure \ref{fig:introduce-xp}), which we can shear in a direction
that increases the Fermat-Weber value, until it either touches the
top edge of $B^{'}$ or the left edge of $B$. Suppose that $x^{'}$
touches the left edge of $B$ (the case where $x^{'}$ touches the
top edge of $B^{'}$ is similar and we omit it for brevity), shown
in Figure \ref{fig:introduce-xpp}. This leaves us with one final
vertex, $x^{''}$, which we can shear in a direction that increases
the Fermat-Weber value of $C$ relative to $x_{0}$; note that we
are free to shear $x^{''}$ until it touches the boundary of $B$,
at which point our shape $C$ now has precisely the shape desired
as in Lemma \ref{lem:UBHalfRectangle}, as shown in Figure \ref{fig:shearing-done}.
This completes the proof.

\begin{figure}[htb]
\begin{centering}
\subfloat[\label{fig:two-triangles}]{\begin{centering}
\includegraphics[height=0.12\textheight]{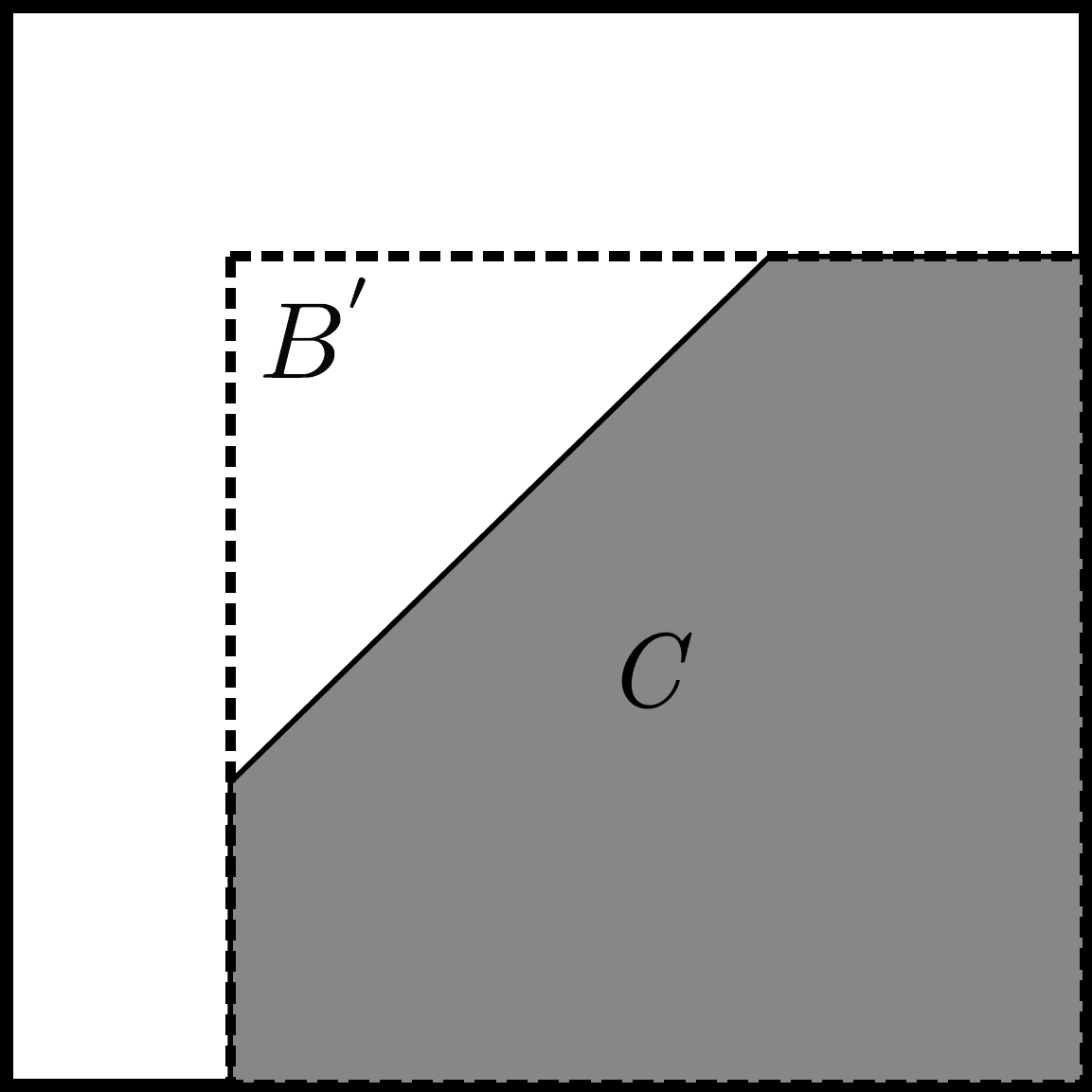}
\par\end{centering} }
\quad{}
\subfloat[\label{fig:introduce-x-shear}]{\begin{centering}
\includegraphics[height=0.12\textheight]{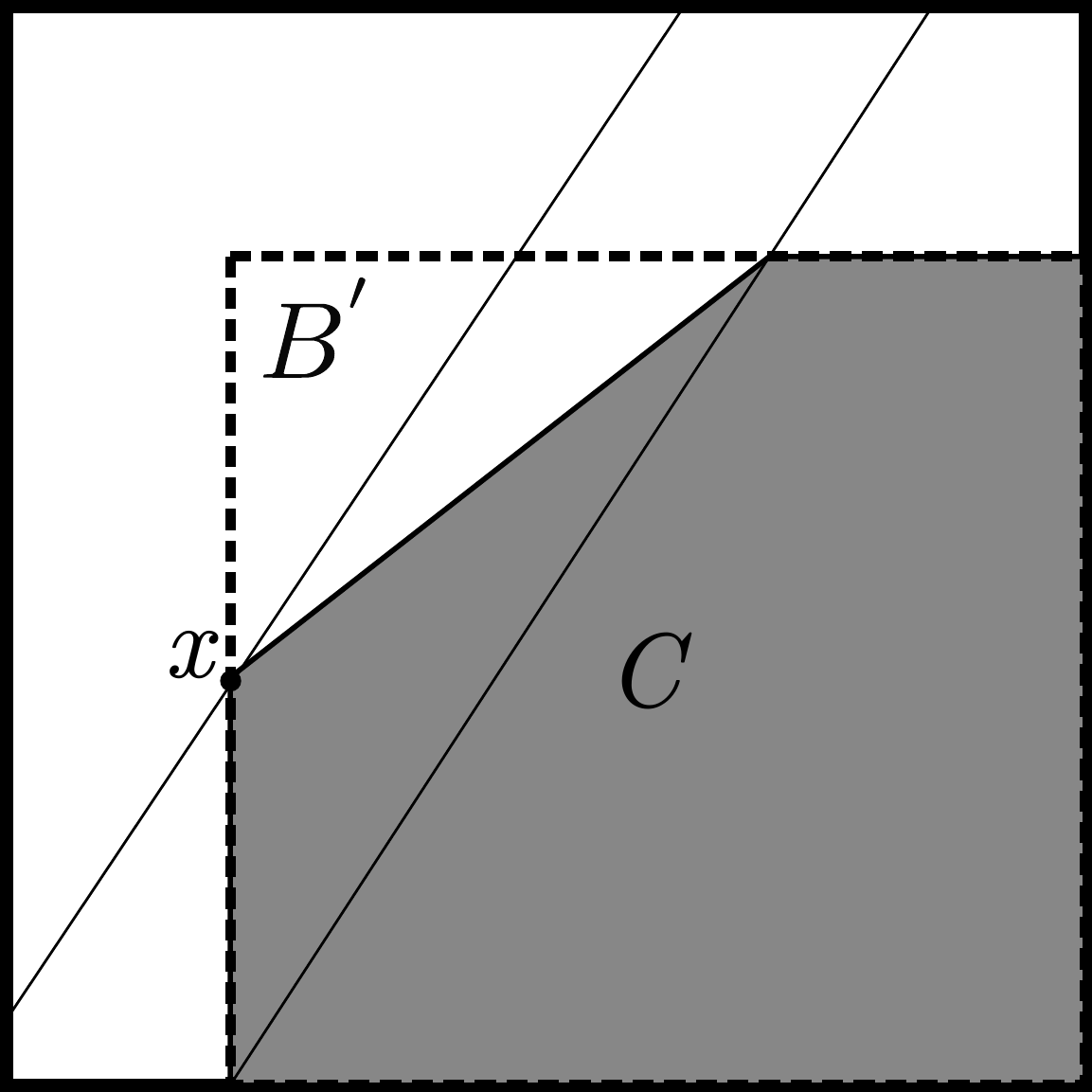}
\par\end{centering} }
\quad{}
\subfloat[\label{fig:x-far-left}]{\begin{centering}
\includegraphics[bb=16bp 0bp 348bp 332bp,height=0.12\textheight]{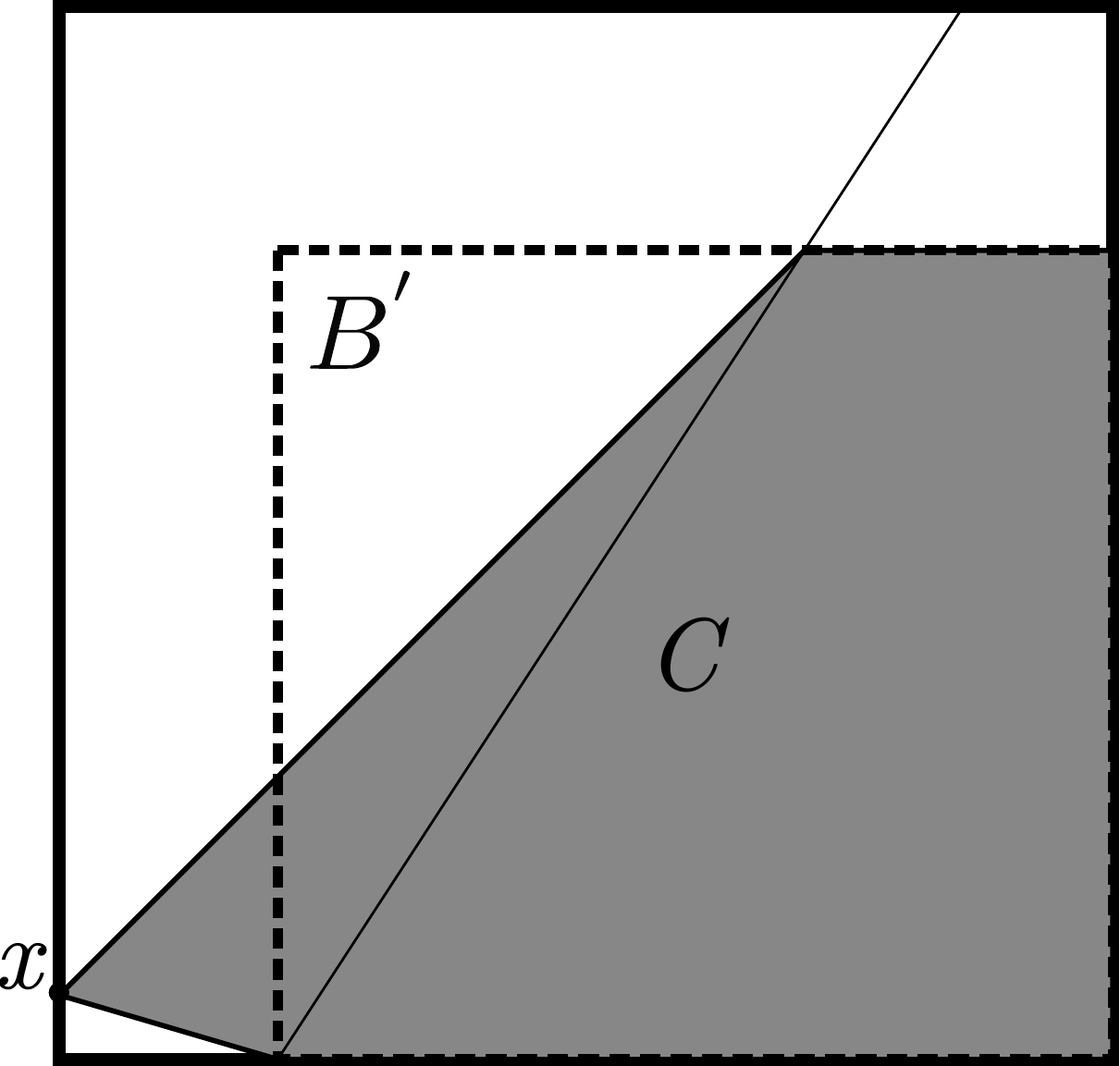}
\par\end{centering} }
\par\end{centering}

\begin{centering}
\subfloat[\label{fig:introduce-xp}]{\begin{centering}
\includegraphics[bb=16bp 0bp 349bp 333bp,height=0.12\textheight]{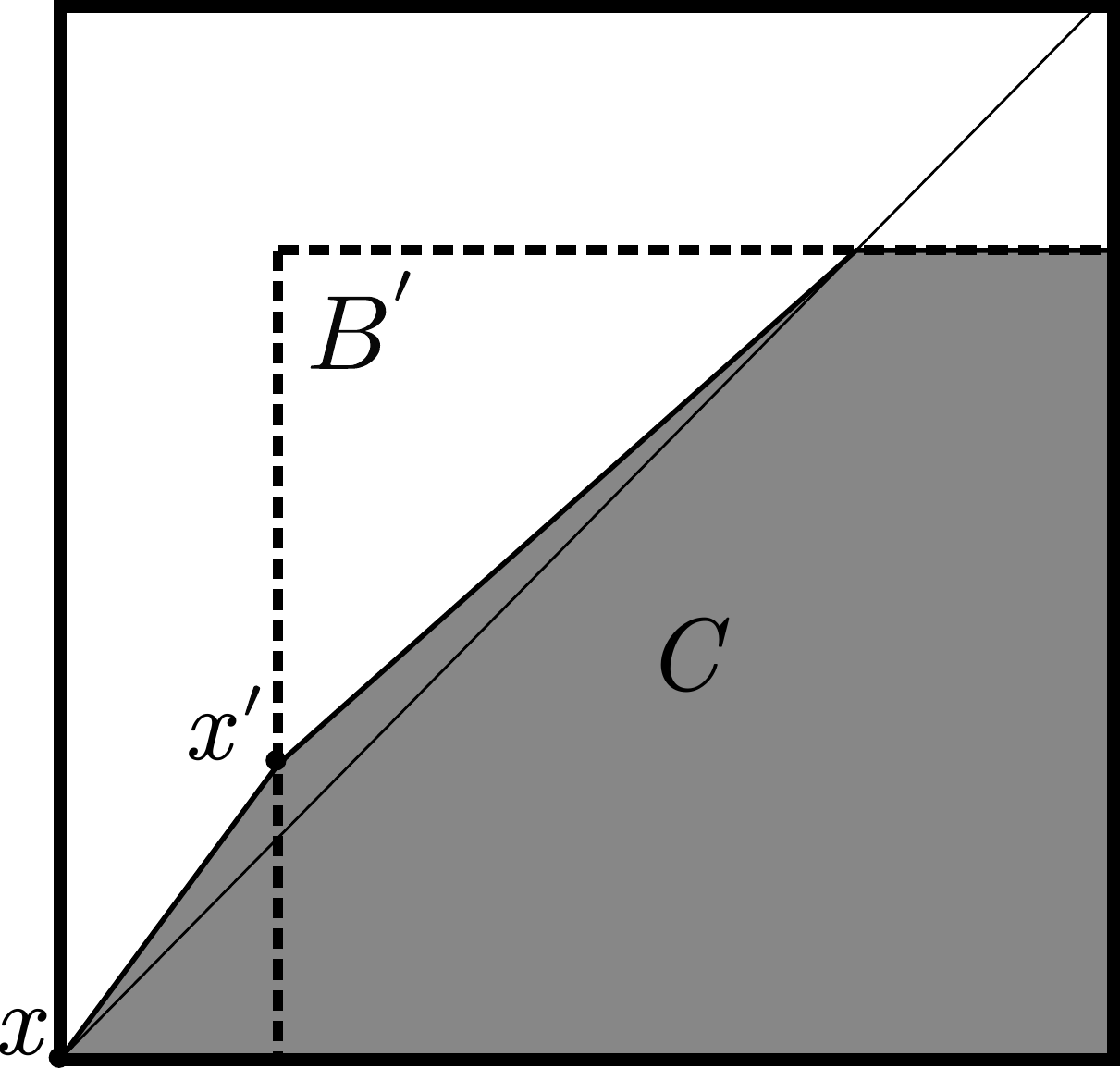}
\par\end{centering} }
\quad{}
\subfloat[\label{fig:introduce-xpp}]{\begin{centering}
\includegraphics[bb=25bp 0bp 332bp 332bp,height=0.12\textheight]{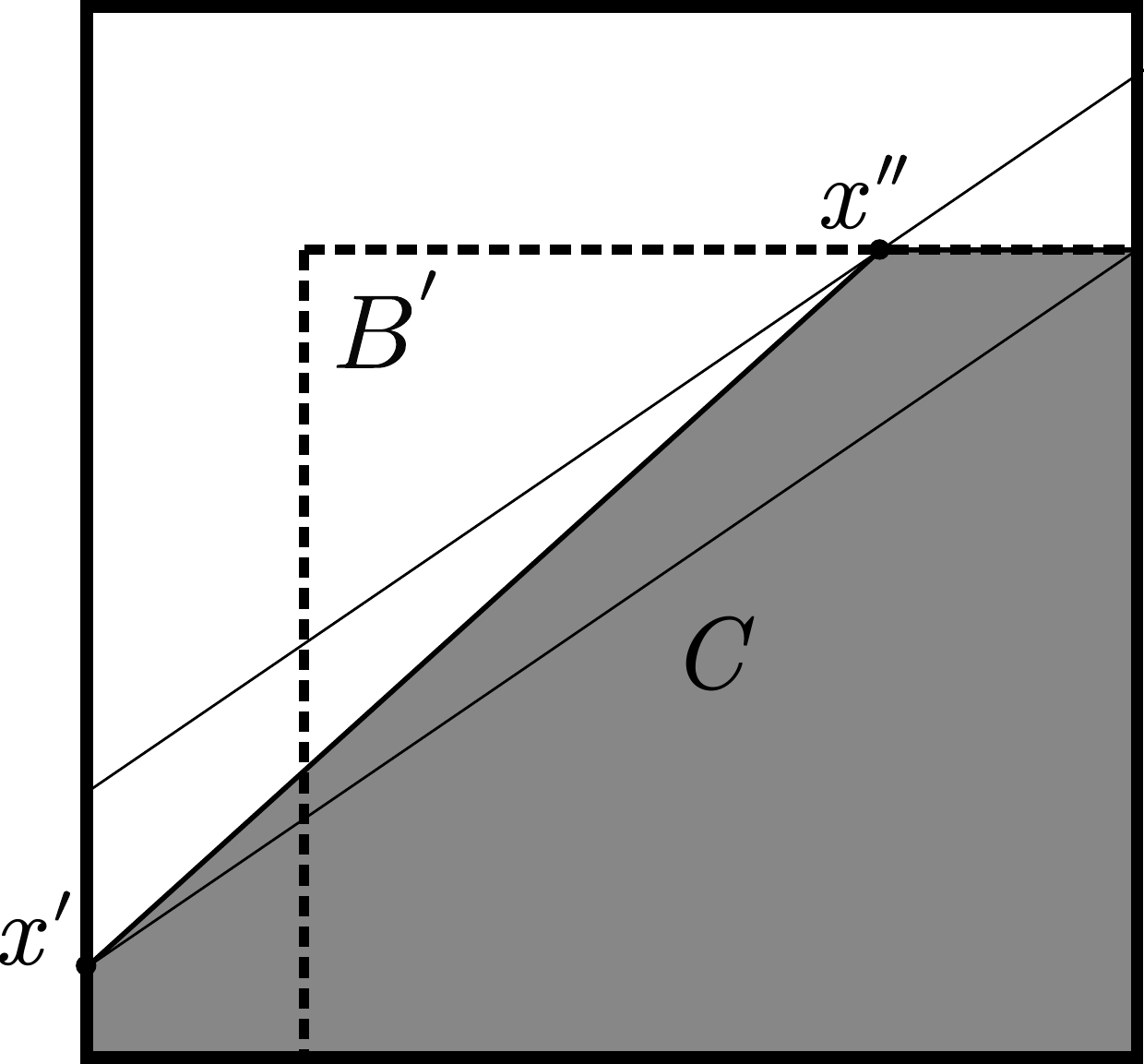}
\par\end{centering} }
\quad{}
\subfloat[\label{fig:shearing-done}]{\begin{centering}
\includegraphics[height=0.12\textheight]{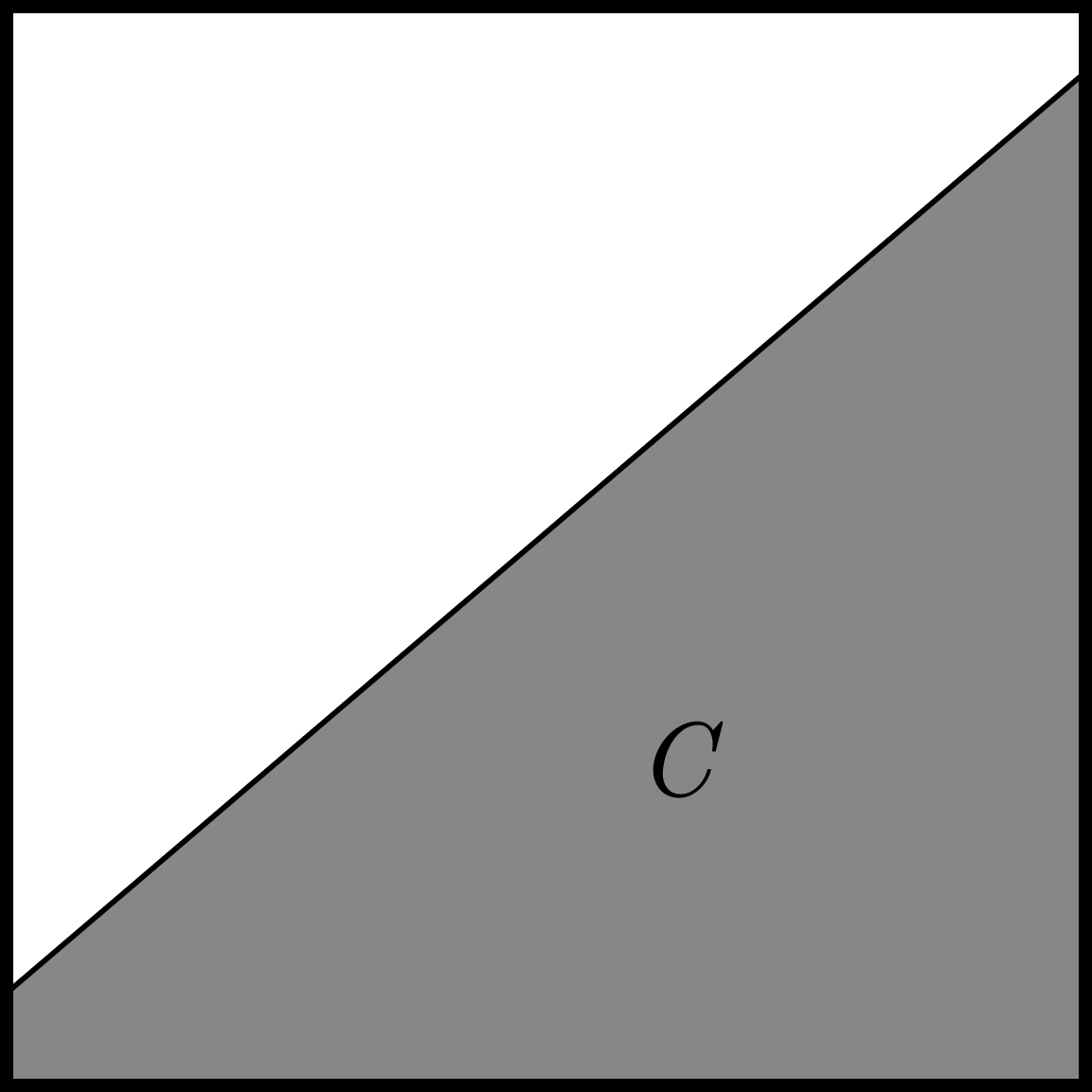}
\par\end{centering} }
\par\end{centering}

\caption{Final adjustments to $C$.}
\end{figure}

\end{document}